\newcommand{%
	\begingroup
	\fontsize{7pt}{12pt}
	\def\svgwidth{0.8\columnwidth}
	\import{./figures/}{.pdf_tex}
	\endgroup
}[2][0.8]{%
	\begingroup
	\fontsize{7pt}{12pt}
	\def\svgwidth{#1\columnwidth}
	\import{./figures/}{#2.pdf_tex}
	\endgroup
}
\newtheorem{problem}{Problem}[section]
\newtheorem{theorem}[problem]{Theorem}
\newtheorem{proposition}[problem]{Proposition}
\newtheorem{corollary}[problem]{Corollary}
\newtheorem{example}[problem]{Example}
\newtheorem{lemma}[problem]{Lemma}
\newtheorem{remark}[problem]{Remark}
\numberwithin{equation}{section}
\theoremstyle{definition}
\newtheorem{definition}[problem]{Definition}
\newtcbox{\redbox}{colback=red!5!white,colframe=red!75!black, tcbox raise base}
\begin{document}
\title{Index of Embedded Networks in the Sphere}
\author{Gaoming Wang}
\address{Yau Mathematical Sciences Center, Tsinghua University, Beijing, 100084, China, and Department of Mathematics, The Chinese University of Hong Kong, Shatin, N.T., Hong Kong.}
\email{gmwang@tsinghua.edu.cn, gmwang@math.cuhk.edu.hk}
\date{}
\maketitle
\begin{abstract}
	In this paper, we will compute the Morse index and nullity of some embedded stationary geodesic networks in the sphere. The key theorem in the computation is that the index (and nullity) for the whole network is related to the index (and nullity) of small networks and the Dirichlet-to-Neumann map defined in this paper. Finally, we will show that for all stationary triple junction networks in $\mathbb{S}^2$, there is only one eigenvalue (without multiplicity) $-1$, which is less than 0, and the corresponding eigenfunctions are locally constant. Besides, the multiplicity of eigenvalues 0 is 3 for these networks, and their eigenfunctions are generated by the rotations on the sphere.
\end{abstract}

\section{Introduction}%
\label{sec:introduction}

As a one-dimensional singular version of surfaces, the networks always attract a lot of researchers. In the study of minimal surfaces and mean curvature flows, especially in the case with singularities, networks are always considered first since they are simple enough for analysis but still could give some useful results. Probably one of the famous stories is a group of undergraduate students \cite{foisy1993standard} gave the proof of the 2-dimensional analog of the double bubble conjecture. Later on, F. Morgan and W. Wichiramala \cite{morgan2002standard} extended their result and showed the unique stable double bubble in $\mathbb{R}^2 $ is the standard double bubble.
There is also extensive research on planar clusters.
See for example \cite{cox1994shortest, morgan1998wulff, heppes2005planar, wichiramala2004proof}. In particular, one can also find the nice and elementary introduction of planar clusters in \cite{morgan2016geometric,morgan2018space}.

Similarly, since the mean curvature flows of surfaces with junctions are hard, there is also a lot of work regarding the one-dimensional mean curvature flows for networks, namely network flows. See for instance \cite{schulze2020local,mantegazza2004motion,bronsard1993three,ilmanen2014short,tonegawa2016blow} for some results of mean curvature flow with triple junctions and network flows.

Besides that, the Morse index is an important concept for minimal surfaces.
In general, it is pretty hard to calculate the index of a general minimal surface in general ambient spaces, especially the minimal surfaces generated by Almgren-Pitts min-max theory, see for instance \cite{urbano1990minimal,fraser2007index,marques2021morse,li2016existence,guang2021min} (including the free boundary minimal surfaces case). 

After the work \cite{wang2022curvature}, we are interested in other aspects of minimal multiple junction surfaces, especially the Morse index of those surfaces. Unfortunately, we might need to do a lot of analysis of elliptic operators on the multiple junction surfaces, especially the regularity. So before that, we can try to consider calculating the index of one-dimensional multiple junction surfaces to see if we can find a good method to compute the index.
In this paper, we will focus on the Morse index and nullity of triple junction networks, especially when they are minimally embedded in $\mathbb{S}^2$. 

Before the computation, we need the definitions of stationary networks on $\mathbb{S}^2$. The precise definitions and related notation will be described in Section \ref{sec:prelimiarly}. Here we give a quick introduction so we can state the main theorem.
At first, we will need to calculate its first and second variations.
This will lead to the function spaces and stability operators we are interested in on this network. 
Indeed, similar things have been done in my previous work \cite{wang2022curvature}.
So for the functions defined on the network, we still need a compatibility condition (see Subsection \ref{sub:some_useful_function_spaces} for detailed function spaces) to make sure it can arise from the variation on $\mathbb{S}^2$. 
The stability operator just has form
\[
	Lu=\Delta u+u
\]
by the second variation of networks.
So this operator is closely related to the Laplacian operator on the network. 
This is one difference with the case of the minimal surfaces since there are usually more terms related to the second fundamental form of surfaces in the stability operators.

After defining the Jacobi operator $L$ on geodesic networks, we can talk about its eigenvalues and eigenfunctions. Standard methods and results about the spectrum of elliptic operators from PDEs can directly be applied to the operator $L$ here, especially the min-max characterization of eigenvalues. Until now, we can define the (Morse) index and nullity of the network in the sphere. 

Now we can state our first main theorem of index and nullity of embedded stationary triple junction networks in $\mathbb{S}^2$ (see Theorem \ref{thm_index_and_nullity_of_triple_junction_networks} for more precise statement).
\begin{theorem}
	[Index and nullity of stationary triple junction networks]
	The Morse index of all embedded closed stationary triple junction networks in $\mathbb{S}^2$ is $F-1$ where $F$ is the number of regions on the sphere cut by this network. 
	The corresponding eigenfunctions are all locally constant (explained in Theorem \ref{thm_index_and_nullity_of_triple_junction_networks}) with eigenvalue $-1$.

	The nullity of all embedded closed stationary triple junction networks in $\mathbb{S}^2$ is 3 and the corresponding eigenfunctions are generated by the rotations on $\mathbb{S}^2$ with eigenvalue $0$.
	\label{thm_index_and_nullity_of_triple_junction_networks_short}
\end{theorem}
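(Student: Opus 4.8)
The plan is to pin down completely the spectrum of $-L=-\Delta-1$ on the network $N$ in the range $(-\infty,0]$, using two explicit families of eigenfunctions for the lower bounds on index and nullity and the Dirichlet-to-Neumann reduction from the key theorem for the matching upper bounds. First I would record that the second variation is the quadratic form $Q(u)=\int_N|\nabla u|^2-u^2$, so every eigenvalue of $-L$ is $\ge-1$ and the eigenspace for the eigenvalue $-1$ is exactly $\mathcal C:=\{u:\nabla u\equiv0\}$, the functions constant on each edge and compatible at each junction. Separately, each one-parameter family of rotations of $\mathbb S^2$ carries $N$ through stationary networks, so the normal component of its generating Killing field solves $u''+u=0$ along each great-circle edge and, because the rotation flow preserves the balancing of tangents at each junction, satisfies there both the compatibility relation and the natural condition (equality of the three conormal derivatives); these three Jacobi fields are independent, since a nontrivial combination is again a rotation field whose vanishing normal part on $N$ would force $N$ into the two-point fixed set of that rotation. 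Hence the index is at least $\dim\mathcal C$ and the nullity is at least $3$.

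Next I would compute $\dim\mathcal C$. Assigning a value $v_R$ to each of the $F$ regions produces the edge-function $u_e:=v_{R^+(e)}-v_{R^-(e)}$ (where $R^\pm(e)$ are the two regions on either side of $e$), which always lies in $\mathcal C$ because the triple-junction compatibility relation is precisely the telescoping identity for the three regions meeting there; this assignment has kernel the constants (as $N$ is connected) and, being up to sign a cellular coboundary map on $\mathbb S^2$ with $H^1(\mathbb S^2)=0$, it surjects onto $\mathcal C$. Therefore $\dim\mathcal C=F-1$, so the index is at least $F-1$; and since the eigenspace for $-1$ equals $\mathcal C$, any eigenfunction with a negative eigenvalue is locally constant with eigenvalue $-1$.

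For the matching upper bounds I would apply the key theorem to the decomposition of $N$ obtained by cutting at all junctions (equivalently, by trimming a short star around each junction): each piece is a geodesic arc of length $\ell_e\le\pi$, on which the Dirichlet operator $-(\partial_s^2+1)$ has eigenvalues $k^2\pi^2/\ell_e^2-1\ge0$ for $k\ge1$, hence index $0$ (and, for $\ell_e<\pi$, nullity $0$). Thus the index and nullity of $N$ equal those of the Dirichlet-to-Neumann bilinear form at parameter $0$, restricted to the compatible junction data. On a generic edge of length $\ell\in(0,\pi)$ this form is $\frac{1}{\sin\ell}\left(\begin{smallmatrix}\cos\ell&-1\\-1&\cos\ell\end{smallmatrix}\right)$, which is indefinite with exactly one negative eigenvalue; assembling over edges, imposing the compatibility constraints, and re-using the cohomological computation above, one identifies the negative part of the resulting form with the boundary data of $\mathcal C$ (dimension $F-1$) and its kernel with the boundary data of the three rotation fields (dimension $3$), everything else being positive. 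This yields index $=F-1$ with all corresponding eigenfunctions locally constant of eigenvalue $-1$, and nullity $=3$ with eigenfunctions the rotation fields of eigenvalue $0$.

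The heart of the argument, and the step I expect to be the main obstacle, is this last sign-and-rank analysis of the assembled Dirichlet-to-Neumann form: showing, uniformly over all such networks, that imposing one compatibility constraint per junction drops its index to exactly $F-1$ and opens a kernel of dimension exactly $3$, no more and no less. This is where simple-connectedness of $\mathbb S^2$ (through the coboundary description of $\mathcal C$) and the geometry of geodesic triple junctions (through the rotation Jacobi fields) enter essentially. A secondary technical point is the borderline case of length-$\pi$ edges, where the naive edge Dirichlet-to-Neumann map degenerates because of a zero Dirichlet eigenvalue and one must use the degenerate form of the key theorem; the simplest example is the three-lune $\theta$-graph ($F=3$, all edges of length $\pi$), where a direct computation confirms index $2$ and nullity $3$.
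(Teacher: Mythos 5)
Your lower bounds are correct and essentially match the paper's Proposition \ref{prop:lower_bound}: the $(-1)$-eigenspace is exactly the locally constant compatible functions, and your coboundary description of that space (face assignments modulo constants, surjective because $H^1(\mathbb{S}^2)=0$) is a cleaner route to $\dim=F-1$ than the paper's hands-on linear algebra; the three rotation fields give nullity $\ge 3$ as in the paper. The problem is the upper bound, which is where essentially all of the content of the theorem lives, and there your proposal has a genuine gap. After cutting at all junctions, the assembled Dirichlet-to-Neumann form lives on $\overline{V}_1(\overline{\mathcal{P}})$, which has dimension $2V=4F-8$; the unconstrained direct sum of the edge forms $\frac{1}{\sin\ell}\bigl(\begin{smallmatrix}\cos\ell&-1\\-1&\cos\ell\end{smallmatrix}\bigr)$ has index $E$ and trivial kernel, and general principles about restricting a quadratic form to a codimension-$V$ subspace only give $\mathrm{Ind}(\overline{T})\ge E-V=F-2$ and $\mathrm{Ind}(\overline{T})\le E$ --- nothing forces ``everything else to be positive.'' The claim that imposing the compatibility constraints leaves exactly $F-1$ negative directions and exactly $3$ null directions \emph{is} the theorem, restated as a finite-dimensional sign-and-rank problem, and you supply no argument for it beyond the identification of the negative cone with the trace of $\mathcal{C}$ (which only gives the lower bound again). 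You flag this yourself as ``the main obstacle,'' which is an accurate self-assessment: as written this is a plan, not a proof.

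It is also worth comparing strategies. The paper does not attempt a uniform argument: it invokes Heppes's classification of the ten equiangular nets on $\mathbb{S}^2$, and for each of the nine nontrivial networks it chooses a partition adapted to the symmetry (into four isometric pieces for the Platonic types, two or three isometric pieces for the prism and mixed types), verifies by direct computation that the pieces have small or zero index and nullity, bounds $\mathrm{Ind}(\overline{T})+\mathrm{Nul}(\overline{T})$ by $\dim\overline{V}_1(\overline{\mathcal{P}})$ or by exhibiting explicit positive eigenvectors, and concludes $\mathrm{Ind}(\mathcal{N})+\mathrm{Nul}(\mathcal{N})\le F+2$ case by case via Corollary \ref{cor:index_estimate}. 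Your all-junctions decomposition would, if the sign analysis could be carried out uniformly, yield a classification-free and considerably more conceptual proof; but to salvage it you would need an actual mechanism (beyond $H^1(\mathbb{S}^2)=0$) controlling the positive part of the constrained form, and at present no such mechanism is in sight. Either supply that analysis or fall back on the classification and a case-by-case verification as the paper does. Your remark about length-$\pi$ edges is well taken but is a side issue: in the classification this degeneracy occurs only for the network of three half great circles, which both you and the paper handle by direct computation.
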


In other words, this theorem says that there is a gap between the first two eigenvalues (without multiplicity) of the Laplacian operator. Recall that the outstanding open problem from S.T.Yau \cite{yau1982seminar} says the first non-trivial eigenvalue of the Laplacian on a closed embedded minimal hypersurface in $\mathbb{S}^n$ is $n$. This conjecture is far from solved (see for instance \cite{tang2013isoparametric,zhu2017minimal}).
Our theorem says that Yau's conjecture is true for one-dimensional stationary triple junction networks under our scenarios of the above theorem. So it will give partial results related to Yau's conjecture.

	The proof of Theorem \ref{thm_index_and_nullity_of_triple_junction_networks_short} requires the classification of embedded closed stationary triple junction networks in $\mathbb{S}^2$.
	If we do not assume the networks to have junction order at most three at each junction point, then there are infinitely many stationary embedded networks in $\mathbb{S}^2$, although they are not area-minimizing locally in the sense of sets. 


Usually, it is relatively hard to direct compute the networks' index and nullity just based on the definitions. Inspired by the work of H. Tran \cite{tran2020index}, we can consider dividing the big network into several small parts and see if we can calculate the index and nullity through the index and nullity of a small network and a suitable Dirichlet-to-Neumann map.
It turns out this method works for the networks. So this part is the central part of this paper. It will provide a vital tool in the calculation, although we need a bit more space to give the definition of the Dirichlet-to-Neumann map for a network with boundary. 
The eigenvalues of the Dirichlet-to-Neumann map are known as the Steklov eigenvalues. 
There is also quite a lot of research related to the Steklov eigenvalues, and they have a relationship with minimal surfaces, too.
For example, Fraser and Schoen \cite{fraser2012minimal,fraser2016sharp} gave the relation between extremal Steklov eigenvalues and free boundary minimal surfaces in a ball.

Now suppose we cut a big network $\mathcal{N}$ into small pieces $\left\{ \mathcal{N}_i \right\}_{i=1}^l$ along a finite point set $\overline{\mathcal{P}}$. Then we can define a new Dirichlet-to-Neumann map $\overline{T}$ on set $\overline{\mathcal{P}}$ using the Dirichlet-to-Neumann map constructed on small networks. The precise definition is given in Section \ref{sec:index_theorem_for_networks}. Using this Dirichlet-to-Neumann map, we have the following index and nullity theorem to calculate the index and nullity for a big network (see Theorem \ref{thm_index_theorem_for_network} and Theorem \ref{thm_nullity_theorem_for_network} for details).
\begin{theorem}
	[Index and nullity theorem]
	Suppose $\mathcal{N}$ is a network with partition $\left\{ \mathcal{N}_i \right\}_{i=1}^l$.
	Then the index and nullity of $\mathcal{N}$ can be computed as follows,
	\begin{align*}
		\mathrm{Ind}(\mathcal{N})={} & \sum_{i =1}^{l}\mathrm{Ind}(\mathcal{N}_i)+\mathrm{Ind}(\overline{T})+\mathrm{dim}(F_1), \\
		\mathrm{Nul}(\mathcal{N})={} & \mathrm{Nul}(\overline{T})+\mathrm{dim}(F_2)+\sum_{i =1}^{l}\mathrm{dim}(\mathcal{I}_0(\tilde{Q}_i)).
	\end{align*}
	Here, the linear spaces $F_1,F_2$, and $\mathcal{I}_0(\tilde{Q}_i)$ are defined in Section \ref{sec:index_theorem_for_networks}.
	\label{thm_index_and_nullity_theorem_short}
\end{theorem}

Although the key thought comes from the work \cite{tran2020index}, there are still some differences from the cases of the free boundary surfaces. The first thing is, the properties of elliptic operators on the network are quite different from the usual elliptic operators on surfaces. So we need to pay more attention to how to define the Dirichlet-to-Neumann map for each small network. Then we can try to construct a global Dirichlet-to-Neumann map $\overline{T}$ from the Dirichlet-to-Neumann on the small networks along the junctions. In the definition of $\overline{T}$, we need to make sure $\overline{T}$ can agree with the usual Dirichlet-to-Neumann on subnetworks and also require every function in the image of $\overline{T}$ will still lie the function space we care about. So we will define quite a lot of spaces to make sure $\overline{T}$ is well-defined.

As a corollary of Theorem \ref{thm_index_and_nullity_theorem_short} (see Corollary \ref{cor:index_estimate}), we know the number of eigenvalues not greater than 0 (count with multiplicity) of a big network is just the sum of the number of eigenvalues not greater than 0 of the small networks and of the Dirichlet-to-Neumann $\overline{T}$.
This corollary will help us to calculate the index and nullity of complex networks on $\mathbb{S}^2$.

\begin{remark}
	This theorem can be extended to higher dimensions for surface clusters like the minimal multiple junction surfaces mentioned in \cite{wang2022curvature}. See \cite{thesis} for details. 
\end{remark}

In Section \ref{sec:prelimiarly}, we will give the basic definitions of networks in $\mathbb{S}^2$ and their indices and nullities. Then we will talk about the eigenvalues and eigenfunctions from the aspect of elliptic partial differential equations in Section \ref{sec:spectrum_on_networks_and_dirichlet_to_neumann_map}, including the definition of Dirichlet-to-Neumann map on networks with boundary. In Section \ref{sec:index_theorem_for_networks}, we will give the new Dirichlet-to-Neumann map for the partition of networks and prove the main Theorem \ref{thm_index_and_nullity_theorem_short}.
In the last section, we will calculate the index and nullity of embedded closed stationary triple junction networks to finish the proof of Theorem \ref{thm_index_and_nullity_of_triple_junction_networks_short}.

\section{Preliminaries}%
\label{sec:prelimiarly}

\subsection{Curves and networks in $\mathbb S^2$}%
\label{sub:definition_of_networks}

We say $\gamma:I\rightarrow \mathbb{S}^2$ is a (smooth) \textit{curve} if $\gamma$ is a smooth map from an interval $I$ to $\mathbb{S}^2$. Sometimes, we call a subset of $\mathbb{S}^2$ a curve if this subset is an image of a curve $\gamma$.

We make the following definitions and assumptions for the curves we are interested in.
\begin{itemize}
	\item A curve $\gamma$ is called \textit{regular} if $\gamma'(t)\neq 0$ for every $t \in I$. When we talk about a curve in this paper, we will always assume it is regular.
	\item We define $\xi(t) =\frac{\gamma'(t)}{\left|\gamma'(t)\right|}$ to be the \textit{unit tangent vector field} along regular curve $\gamma$.
\item We write $\nu(t)$ to be the \textit{unit normal vector field} along a curve $\gamma$. 
\item Although $I$ can be unbounded, we will only consider the case when $I$ is a bounded closed interval. Moreover, we can assume $I=[0,1]$ and $\gamma$ has constant speed after reparametrization.
\item We write $\tau$ as the \textit{unit outer normal} of $\gamma$. That means $\tau$ is only defined at $\gamma(0)$ and $\gamma(1)$ with $\tau(\gamma(0))=-\xi(0), \tau(\gamma(1))=\xi(1)$.
\item We will assume all curves in this paper do not have self-intersection.
	That is $\gamma(t_1)\neq \gamma(t_2)$ for any $0\le t_1<t_2\le 1$.
\item Given a regular curve $\gamma(t):I\rightarrow \mathbb{S}^2$, we choose $s$ to be an arc length parameterization of $\gamma$. So $ds:=|\gamma'(t)|dt$ is the line element of $\gamma$.
	When we have a function $f$ defined along curve $\gamma$, the integration of $f$ along curve $\gamma$ is defined as
	\[
		\int_{ \gamma} f ds:=\int_{0}^{1} f(\gamma(t))ds(t)=\int_{0}^{1} f(\gamma(t))|\gamma'(t)|dt.
	\]
	\item The \textit{Length} of $\gamma$, written as $L(\gamma)$, is defined as
	\[
		L(\gamma):=\int_0^1 \left|\gamma'(t
		)\right|dt=\int_{ \gamma} ds.
	\]
	Since $\gamma$ has constant speed, we have $|\gamma'(t)|=L(\gamma)$ for $t \in [0,1]$.

\item Given a function $f$ defined along a regular curve $\gamma$, we define the gradient of $f$ along $\gamma$ as
	\[
		\nabla ^{\gamma} f = \frac{d f}{d s} \frac{d \gamma}{d s}= \frac{d f(\gamma(t))}{dt} \frac{\gamma'(t)}{L^2(\gamma)}.
	\]
	
	The Laplacian of $f$ along $\gamma$ is defined as
	\[
		\Delta_{\gamma}f:=\frac{d^2f}{ds^2}.
	\]

\end{itemize}

\begin{definition}
	A \textit{network} in $\mathbb{S}^2$ with interior endpoints set $\mathcal{P}$ and boundary endpoints set $\mathcal{Q}$ is defined as a collection of finite curves $\{ \gamma^1,\cdots , \gamma^n\}$ in $\mathbb{S}^2$ such that
\begin{itemize}
		\item $\mathcal{P},\mathcal{Q}\subset \mathbb{S}^2$ are two finite sets of points such that $\mathcal{P} \cap \mathcal{Q}
			=\emptyset $.
		\item $\mathcal{P} \cup \mathcal{Q}$ is the set of all possible (distinct) endpoints of $\gamma^i$ for $i=1,\cdots ,n$. That is $\mathcal{P} \cup  \mathcal{Q}= \bigcup_{i=1}^{n}\{ \gamma^i(0), \gamma^i(1)\}$.
	\end{itemize}
\end{definition}

We say a network $\mathcal{N}$ is \textit{embedded} if $\gamma^i$ can only intersect each other at their endpoints. 
We will always assume $\mathcal{N}$ is embedded in $\mathbb{S}^2$ unless we say it is immersed.

For any $P \in \mathcal{P}$, we write $\{ \gamma^{j} _{P}\} _{j=1,\cdots ,j_P}$ as the collection of $\gamma^{i}$ having $P$ as its endpoint. We write $\tau^{j}_{P}$ as the exterior unit tangent vector of $\gamma^{j}_{P}$ for $j=1,\cdots ,j_P$. For example, if $\gamma^{j}_{P}(0)=P$, then $\tau^j_{P}=-\frac{(\gamma^j_{P})'(0)}{\left|(\gamma^j_{P})'(0)\right|}$.

Similarly, for any $Q \in \mathcal{Q}$, we write $\{ \gamma^k_Q \}_{k=1,\cdots , k_Q}$ as the collection of $\gamma^i$ having $Q$ as its endpoint and use $\tau^k_Q$ to denote the exterior unit tangent vector of $\gamma_Q^k$.
In most cases of this paper, $k_Q$ will be one, and we write $\gamma_Q= \gamma^1_Q$ for short.

To avoid ambiguity, we suppose $\tau^j_{P}$ $(\tau^k_Q)$ point in different directions for different values of $j$ ($k$).

We say a network $\mathcal{N}$ is \textit{closed} if $Q=\emptyset $.

\begin{remark}
	Of course, we can define the networks in the general manifold or even define the network intrinsically by identifying their endpoints. 
	We can see most of the arguments can go through even if for the intrinsic definitions. 
\end{remark}

\begin{definition}
	We say $\mathcal{N}$ is a \textit{stationary network} in $\mathbb{S}^2$ if the following hold.
	\begin{itemize}
		\item Each $\gamma^{i}$ is an arc of a great circle.
		\item For any $P$ in $\mathcal{P}$, we have 
			\[
				\sum_{j =1}^{j_P}
				\tau_{P}^j=0.
			\]
	\end{itemize}
\end{definition}

\begin{remark}
	If $\mathcal{N}$ is stationary, then we know all of $j_P\ge 2$ by the second condition in the definition.
\end{remark}

\begin{remark}
	We can also consider the network with density $\theta _i$ at each $\gamma^i$ like \cite{wang2022curvature}.
By imposing appropriate densities, we can see that the one skeleton of the spherical truncated icosahedron (soccer ball) can be stationary. So it is possible to calculate the index and nullity for this kind of network.
\end{remark}

By a simple geometric observation, we know for the stationary network $\mathcal{N}$, the following two special cases hold. Let $P \in \mathcal{P}$, then 
\begin{itemize}
	\item If $P$ is a double endpoint, i.e. $j_P=2$, then $\tau^1_{P}=-\tau^2_{P}$ as a vector.
	\item If $P$ is a triple endpoint, i.e. $j_P=3$, then $\tau^1_{P}+\tau^2_{P}+ \tau^3_{P}=0$ as a vector.
\end{itemize}

We can define the total length of $\mathcal{N}$, written as $L(\mathcal{N})$, as the sum of all $\gamma^{i}$. That is
\[
	L(\mathcal{N}):= \sum_{i =1}^{n}L(\gamma^{i}).
\]

Like the minimal surfaces, we have the first variation formula for the length of $\mathcal{N}$. Suppose $\mathcal{N}_t=\{ \gamma^i_t \}_{i=1}^n$ is a smooth variation of $\mathcal{N}$ fixing $\mathcal{Q}$, then
\[
	\left.\frac{d}{dt}\right|_{t=0}
	L(\mathcal{N}_t)=\sum_{i =1}^{n}
	\int_{\gamma^{i}}
	\kappa^{i}\nu^{i}\cdot X ds^{i}+ \sum_{P \in \mathcal{P}}\sum_{j =1}^{j_P}
	X\cdot \tau_{P}^{j},
\]
where $\kappa^i$ is the geodesic curvature of $\gamma^{i}$ in $\mathbb{S}^2$ with respect to $\nu^i$, $\nu^{i}=\nu^{\gamma^i}$, the unit normal vector field along $\gamma^{i}$, $X$, a variational vector field associated with variation of $\mathcal{N}$.

From the first variation formula, we can see that $\mathcal{N}$ is a stationary network in $\mathbb{S}^2$ if and only if it is a critical point of length for any suitable variation $\mathcal{N}_t$.

Similarly, we can compute the second variation formula. Suppose $\mathcal{N}$ is a stationary network. If we write $\phi^{i}=X\cdot \nu^{i}$, then the second derivative of length can be computed by 
\[
	\left.\frac{d^{2}}{dt^{2}}\right|_{t=0}
	L(\mathcal{N}_t)=\sum_{i= 1}^{n}
	\int_{\gamma^{i}} \left|\nabla^{\gamma^i}
	\phi^{i}\right|^2-\left|\phi^{i}\right|^2
	ds^i,
\]
where $\nabla^{\gamma^{i}}$ is the gradient on $\gamma^{i}$. 

So we can say $\mathcal{N}$ is \textit{stable} if $\left.\frac{d^{2}}{dt^{2}}\right|_{t=0}L(\mathcal{N}_t)\ge 0$ for any variation $\mathcal{N}_t$ fixing $\mathcal{Q}$.

\subsection{Some useful function spaces}%
\label{sub:some_useful_function_spaces}

Based on the properties of $\phi^{i}$, we define the following function spaces on $\mathcal{N}$. 

\begin{definition}
	When we say $\phi$ is a \textit{function} on $\mathcal{N}$, we mean $\phi$ is a tuple $(\phi^1, \cdots ,\phi^n)$ such that each $\phi^i$ is a function defined on $\gamma^i$.

	We say a function $\phi$ defined on $\mathcal{N}$ is in the function space $C^k(\mathcal{N})$ ($k \in \mathbb{N} \cup \{ \infty \}$) if each $\phi^i$ is of class $C^k$.
\end{definition}

Similarly, we have the Sobolev space on $\mathcal{N}$ defined as follows.

\begin{definition}
	We say a function $\phi$ defined on $\mathcal{N}$ is in the function space $W^{k,p}(\mathcal{N})$ if each $\phi^i$ is of class $W^{k,p}$ on $\gamma^{i}$.
\end{definition}

Now suppose we have a $C^k$ (tangential) vector field on $\mathbb{S}^2$, then we can define a $C^k(\mathcal{N})$ function by defining $\phi$ by taking $\phi^i= X\cdot \nu^i$. 
We will write this function $\phi= X \cdot\nu$ for short.
Besides, for any $C^k$ function $f$ on $\mathbb{S}^2$, we know the function $\phi$ defined by $\phi^i:=f|_{\gamma^i}$ is a $C^k(\mathcal{N})$ function. 
So we know $C^k(\mathcal{N})$ function space is quite large. Since we are interested in the functions of the form $\phi=X\cdot\nu$, let us define a subspace of $C^k(\mathcal{N})$ in the following way.
\begin{definition}
	We say a function $\phi \in C^k_1(\mathcal{N})$ if for any $P \in \mathcal{P}$, there exists a vector $X \in T_{P}\mathbb{S} ^2$ such that $\phi^j_P= X \cdot \nu ^{\gamma^{j}_{P}}$ at $P$. 
	Here we use $\phi^j_P:=\phi^i$ such that $\gamma^i=\gamma^j_P$.
\end{definition}

\begin{remark}
	The main reason why we are interested in the space $C_1^k(\mathcal{N})$ is, many functions in this space come from variations induced by $C^k$ vector fields $X$ on the sphere, and only the normal components of such vector fields along the curves $\gamma^i$ appear in the second variation formula.
	Indeed, there are some other important function spaces on a minimal triple junction hypersurface. See \cite{thesis} for more details.
\end{remark}

Similarly, we have a version like Sobolev Spaces.
\begin{definition}
	We say a function $\phi \in W^{k,p}_1(\mathcal{N})$, $k\ge 1$ if for any $P\in \mathcal{P}$, there exists a vector $X \in T_{P}\mathbb{S}^2$ such that $\phi_P^j= X \cdot \nu ^{\gamma^{j}_{P}}$ at $P$. Here the restriction of $\phi^j_P$ at $P$ should be understood in the trace sense.
\end{definition}

We can easily check $W^{k,p}_1(\mathcal{N})$ is a closed subspace of $W^{k,p}(\gamma^1)\times \cdots \times W^{k,p}(\gamma^n)$ with product norm, so $W^{k,p}_1(\mathcal{N})$ is a Banach space.

Here we introduce other notation to present these function spaces.
For any $P \in \mathcal{P}$, we define $V(P):= \mathbb{R} ^{j_P}$ as a vector space with $\mathrm{dim}= j_P$. Note that when given a function $\phi \in C^{k}(\mathcal{N})$, the restriction of $\phi$ on $P$ is just a tuple $(\gamma^1_P|_P,\cdots , \gamma^{j_P}_P|_P)$, which can be viewed as an element in $V(P)$.
So $V(P)$ is just like a function space on $P$, which comes from the restriction of functions on $\mathcal{N}$.
Now we define $V_1(P):=\{ (v_1,\cdots ,v_{j_P}) \in V(P): v_j=X\cdot \nu^j_P,\quad  \forall 1\le j\le j_P \text{ for some }X \in T_P\mathbb{S}^2\}$. $V_1(P)$ is a subspace of $V(P)$ with dimension at most $2$.

Then we can define $V(\mathring{\mathcal{P}}):= \bigoplus_{P \in \mathring{\mathcal{P}}}V(P), V_1(\mathring{\mathcal{P}}):=\bigoplus_{P \in \mathring{\mathcal{P}}}V_1(P)$ for any $\mathring{\mathcal{P}} \subset \mathcal{P}$.
Then we know a function $\phi \in C^{k}(\mathcal{N})$ $( \phi \in W^{k,p}(\mathcal{N}) )$ is in the space $C^{k}_1(\mathcal{N})$ $(\phi \in W^{k,p}_1(\mathcal{N}))$ if and only if $\phi|_{\mathcal{P}} \in V_1(\mathcal{P})$. 

Besides, we introduce another subspace of $V(\mathcal{P})$ that we will use later on.
We define $V_2(P):= V_1^\bot (P)$ for any $P \in \mathcal{P}$ and write $V_2(\mathring{\mathcal{P}}):=\bigoplus _{P \in \mathring{\mathcal{P}}}V_2(P)$ for any $\mathring{\mathcal{P}}\subset \mathcal{P}$ where the inner product on $V(\mathcal{P} _1)$ is the standard Euclidean inner product. 

Another helpful function space is the space of trace zero functions. 
We define $W^{k,p}_0(\mathcal{N})$ as the trace zero function space. That is, $W^{k,p}_0(\mathcal{N}):= \{ \phi \in W^{k,p}(\mathcal{N}): \phi = 0 \text{ on }\mathcal{Q}\}$.
We will write $W^{k,p}_{0,1}(\mathcal{N}):= W^{k,p}_0(\mathcal{N})\cap W^{k,p}_1(\mathcal{N})$ for short.

Similarly, we write $C^k_0(\mathcal{N})$ as the space of functions in $C^k(\mathcal{N})$ with zero boundary values. We write $C^{k}_{0,1}(\mathcal{N}):= C^{k}_0(\mathcal{N})\cap C^k_1(\mathcal{N})$.

Note that in the above, when we write $\phi=0$ on $\mathcal{Q}$, we actually mean $\phi^{k_i}_Q=0$ at $Q$ for $i=1,\cdots ,k_Q$ for all $Q \in \mathcal{Q}$. This means when we restrict $\phi$ on $\mathcal{Q}$, this restriction is just like a vector with dimension $\sum_{Q \in \mathcal{Q} }^{} k_Q$. For convenience, we write $V(\mathcal{Q}):= \bigoplus _{Q \in \mathcal{Q}}\mathbb{R} ^{k_Q}$ as the function space defined on $\mathcal{Q}$.

From these definitions, we know the function $X\cdot \nu$ is a $C^k_1(\mathcal{N})$ function if $X$ is a $C^{k}$ vector field on $\mathbb{S}^2$.

\begin{remark}
	In general, given $\phi \in C_1^k( \mathcal{N})$, one may not find a $C^k$ vector field $X$ on $\mathbb{S}^2$ such that $\phi^i=X \cdot \nu^{i}$. 

	In the previous definitions of $C^k_1(\mathcal{N})$ and $W^{k,p}_1(\mathcal{N})$, for $\mathcal{N}$ being a stationary network, if $P$ is a double endpoint or a triple endpoint, and the choice of $\nu^j$ on $\gamma^j$ satisfies $\sum_{j =1}^{j_P} \nu^{\gamma^j_{P}}=0$, then the condition for $\phi$ can be simplified by
	\[
		\sum_{j =1}^{j_P}
		\phi^j_P=0, \quad j_P=2 \text{ or }j_P=3.
	\]
\end{remark}

Following the index form for minimal surfaces, we can define the index form for $W^{1,2}_{0,1 }(\mathcal{N})$ functions. Let $\mathcal{N}$ be a stationary network in $\mathbb{S}^2$. From the second variation formula, we can define the bilinear form, called the index form, by the following.

\begin{definition}
	
	\label{def_indexForm}
For any $\phi,\psi \in W^{1,2}_{0,1}( \mathcal{N})$, we define the \textit{index form} by,
\[
	Q(\phi,\psi):= \sum_{i =1}^{n}\int_{\gamma^{i}} 
	\nabla^{\gamma^i}\phi^i \cdot \nabla^{\gamma^i}\psi^i
	-\phi^i \psi^ids^{i}.
\]

\end{definition}

To simplify our formulas, we use the following notation. For any $\phi \in W^{k,p}_1(\mathcal{N})$ (or $C_1^k(\mathcal{N})$),
\begin{align*}
	\int_{\mathcal{N}} \phi:={} & 
	\sum_{i =1}^{n}\int_{\gamma^i} 
	\phi^i ds^i,\\
	\int_{\mathcal{P}} \phi:={} & 
	\sum_{P \in \mathcal{P}}\sum_{j =1}^{j_P}
	\left.\phi^j_P\right|_{P}.
\end{align*}
The integral of $\phi$ on $\mathcal{P}$ is just the summation of all $\phi^{i}$ taking values at endpoints which are in $\mathcal{P}$.

So the index form can be written as
\[
	Q(\phi,\psi)= \int_{\mathcal{N}} 
	\nabla \phi \cdot\nabla \psi- \phi \psi=
	-\int_{\mathcal{N}} \phi\left( \Delta \psi+\psi\right) + \int_{\mathcal{P}} 
	\phi\frac{\partial }{\partial \tau}\psi,
\]
where we have assumed $\phi, \psi \in C^k_{0,1}(\mathcal{N})$ for $k\ge 2$ for the last identity.
Besides, with the above notation, $\int_{\mathcal{P}} \phi \frac{\partial }{\partial \tau}\psi$ should be understood as
\[
	\sum_{P \in \mathcal{P}
	}\sum_{j =1}^{j_P}
\left.\phi^j_P\right|_{P}
\frac{\partial }{\partial \tau^{j}_{P}}
\psi^j_P.
\]

Associated with the index form $Q$, we can define the Jacobi operator for stationary networks in $\mathbb{S}^2$ as follows.
\begin{definition}
	For any $\phi \in C^{k}_1(\mathcal{N})$ for $k\ge 2$, we can define the \textit{Jacobi operator} $L \phi$ on network $\mathcal{N}$ by 
\[
	(L \phi)^i:=\Delta_{\gamma^i}\phi^i
	+\phi^i.\quad (\text{Or }
	L \phi=\Delta \phi+\phi \text{ for short.})
\]
\label{def_JacobiOp}
\end{definition}

Then, we can rewrite $Q$ as
\begin{equation}
	Q(\phi,\psi)=-\int_{ \mathcal{N}} \phi L \psi +\int_{ \mathcal{P}} \phi \frac{\partial }{\partial \tau}\psi.
	\label{eq:bilinearForm}
\end{equation}

\begin{remark}
	The Laplacian in the index form is just the usual second variation of a function with arc-length parameterization. One can try to define the index form for some particular clusters of minimal surfaces like minimal multiple junction surfaces defined in \cite{wang2022curvature}.
\end{remark}

Now we can define the Morse index and nullity of $Q$ for a stationary network $\mathcal{N}$.

\begin{definition}
	The Morse index of a stationary network $\mathcal{N}$, written as $\mathrm{Ind}(\mathcal{N})$, is defined to be the maximal dimension of a subspace of $ W^{1,2}_{0,1}(\mathcal{N})$ which the quadratic form $Q$ is negative definite.

	Similarly, the nullity of $\mathcal{N}$, written as $\mathrm{Nul}(\mathcal{N})$, is defined as the maximal dimension of a subspace of $W^{1,2}_{0,1}(\mathcal{N})$ which the quadratic form $Q$ vanishes.
\end{definition}

Considering the case of the usual symmetric elliptic operator, we can define the spectrum of an operator $L$.
\begin{definition}
	We say $\lambda \in \mathbb{R} $ is a weak eigenvalue of $L$ if there is a nonzero function $\phi \in W^{1,2}_{0,1}(\mathcal{N})$ such that 
	\[
		Q(\phi,\psi)=\lambda \int_{ 
			\mathcal{N}
		} \phi\psi
	\]
	for all $\psi \in W^{1,2}_{0,1} (\mathcal{N})$. The function $\phi$ is an eigenfunction of $L$ of eigenvalue $\lambda$.
\end{definition}

In general, we need to develop the regularity theorems to show the eigenfunction $\phi \in C^\infty_{0,1}
(\mathcal{N})$. But for the 1-dimensional case, we can easily show $\phi \in C^\infty_{0,1}(\mathcal{N})$. Indeed, if $\phi$ is an eigenfunction, then on each curve $\gamma^i$, $\phi^i$ is H\"oder continuous by Morrey's inequality. So at least we know $\phi \in C^0_{0,1} (\mathcal{N} )$. Note that $\phi^i$ satisfies $-\Delta u-u=\lambda u$ weakly on $\gamma^i$ with Dirichlet condition $u=\phi^i$ at $\gamma^i(0)$ and $\gamma^i(1)$, so $\phi^i$ is a smooth function up to the boundary by the usual regularity theorem for the Dirichlet problems. Hence $\phi \in C^\infty_{0,1}(\mathcal{N})$. Now we need to introduce another condition on function spaces to describe the properties of eigenfunctions.
We say a function $\phi \in C^k_1 (\mathcal{N})$, $k\ge 1$, (or $\phi \in W^{k,p}_1(\mathcal{N}), k\ge 2$) satisfies the compatibility condition at $P$ if the following equations hold 
\[
	\sum_{j=1 }^{j_P}X \cdot \nu^{\gamma^j_{P}}
	\frac{\partial }{\partial \tau^j _P}\phi^j_{P}=0 \text{ for all }
	X \in T_P\mathbb{S}^2
\text{ at } P.
\]

We say a function $\phi \in C^k_1 (\mathcal{N})$, $k\ge 1$, (or $\phi \in W^{k,p}_1(\mathcal{N}), k\ge 2 $) satisfies the \textit{compatibility condition} if $\phi$ satisfies the compatibility condition at $P$ for every $P \in \mathcal{P}$.

Note that $X \cdot \nu ^{\gamma^j_P} \in V_1(P)$, then $\phi$ satisfies the compatibility condition at $P$ if and only if $(\frac{\partial }{\partial \tau_P^j} \phi^j_P)_{j=1}^{j_P}\in V_2(P)$. So $\phi$ satisfies the compatibility condition on $\mathcal{P}$ if and only if $\frac{\partial }{\partial \tau} \phi \in V_2(\mathcal{P})$ in short notation. In some special cases, this compatibility condition will become much simple, which will help us to understand what this condition says. If at $P$, we have $j_P=2$ or $3$(double or triple junction) and $\sum_{j =1}^{j_P}\nu ^{\gamma^j_{P}}=0$, then the compatibility condition at $P$ is equivalent to the condition 
\[
	\frac{\partial }{\partial \tau
	^j_{P}}\phi^j _{P}= \frac{\partial }{\partial \tau
	^k_{P}}\phi^k _{P},\quad \forall 1\le j,k\le j_P \text{ at } P.
\]

\begin{remark}
	The reason why we introduce the compatibility condition is following.
	If $\phi$ is a weak eigenfunction of $L$ with eigenvalue $\lambda$, then by \eqref{eq:bilinearForm}, know $\phi$ should satisfy
	\[
		L\phi=0 \text{ on }\mathcal{N}, \text{ and } \psi \frac{\partial \phi}{\partial \tau}=0, \text{ for any } \phi \in V_1(\mathcal{P}).
	\]
	The second condition is exactly the compatibility condition of $\phi$ on $\mathcal{P}$.
\end{remark}

With these conditions, we can describe another definition of eigenvalues and eigenfunctions.
\begin{definition}
	We say $\lambda \in \mathbb{R} $ is an eigenvalue of $L$ if there is a function $\phi \in C^\infty_{0,1} (\mathcal{N})$ which satisfies the compatibility condition on $\mathcal{P}$ such that 
	\[
		-L \phi=\lambda \phi.
	\]
	The function $\phi$ is called the eigenfunction of $L$ with eigenvalue $\lambda$.
\end{definition}

Using the integration by parts and regularity results above, we know $\lambda\ (\phi)$ is an eigenvalue (an eigenfunction) if and only if it is a weak eigenvalue (a weak eigenfunction).

\begin{remark}
	From the definitions in this subsection, we know the function space $C^\infty_{0,1}(\mathcal{N})$ only relies on the choice of $V_1(P)$ for each $P \in \mathcal{P}$. So if we want to define the function space $C^\infty_{0,1}(\mathcal{N})$ for intrinsic networks, we only need to choose a suitable function space $V_1(P)$ for each $P$. 
	This will allow us to define the general elliptic operator acting on space $W^\infty_{0,1} (\mathcal{N})$, which does not rely on the embedding.

	Besides, we can consider the general operator $Lu= \Delta u+ du$ where $d \in C^\infty(\mathcal{N})$. 
\end{remark}

\subsection{Examples of eigenfunctions}%
\label{sub:examples_of_eigenfunctions}

Here we give an example of a stationary network. From this example, we will illustrate what the space $C^k_1(\mathcal{N})$ looks like and which conditions that eigenfunctions should satisfy.

Let $\mathcal{N}$ be the one-skeleton of a regular spherical tetrahedron like the one shown in Figure \ref{fig:tetrahedron}.

\begin{figure}[ht]
	\centering
\begin{subfigure}[b]{.4\linewidth}
		\centering 
		\includegraphics[width=0.8\textwidth]{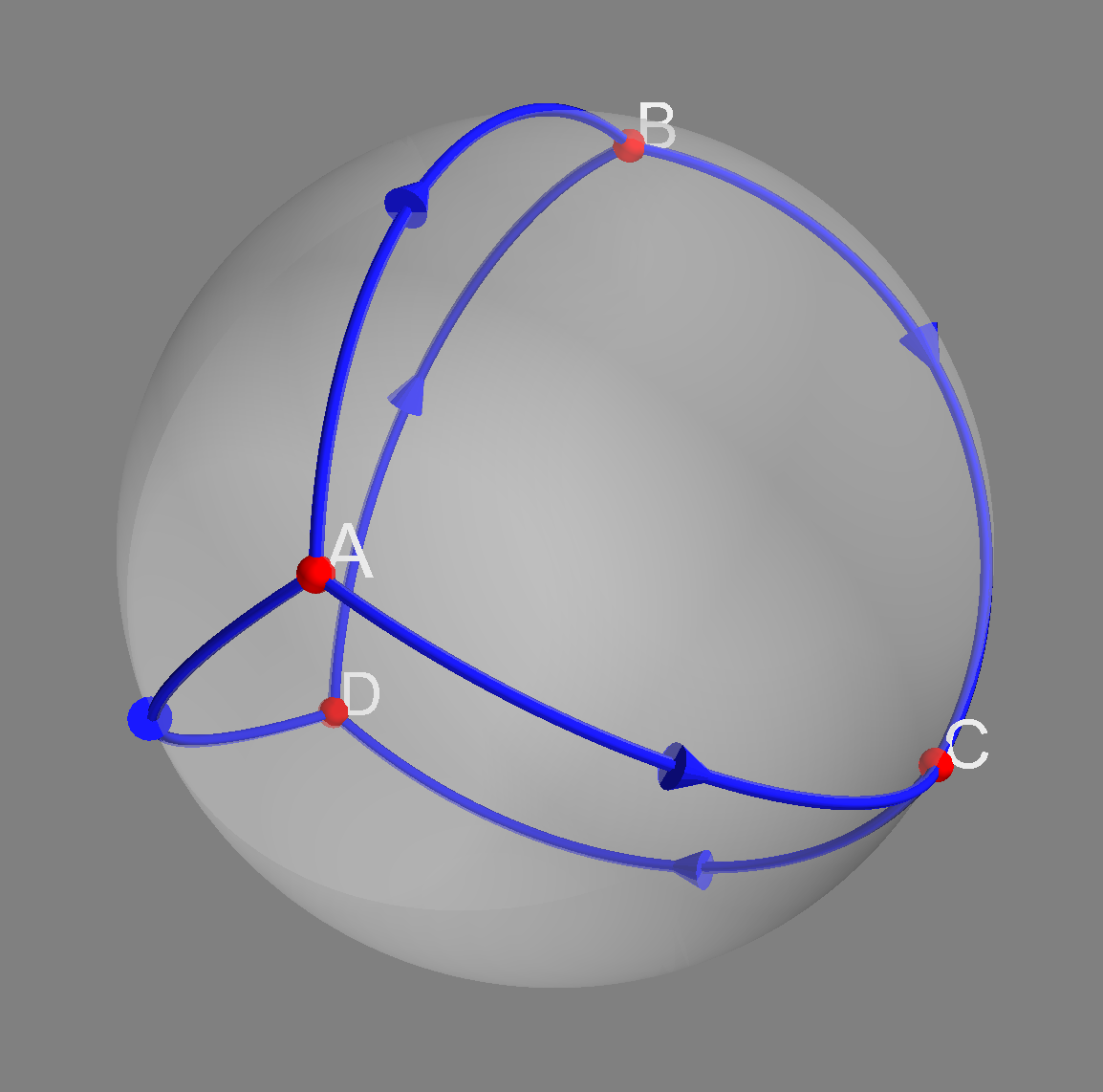}
		\caption{$\mathcal{N}$ on the sphere}
		\label{fig:figures/T2.png}
	\end{subfigure}
	\begin{subfigure}[b]{.5\linewidth}
    \centering 
	\begingroup
	\fontsize{7pt}{12pt}
	\def\svgwidth{0.8\columnwidth}
	\import{./figures/}{4faces.pdf_tex}
	\endgroup

    \caption{Stereographic projection of $\mathcal{N}$}
	\label{fig:orientation_tegrahedron}
	\end{subfigure}

	\caption{A regular spherical tetrahedron}
	\label{fig:tetrahedron}
\end{figure}

$\mathcal{N}$ has six arcs with orientation shown in Figure \ref{fig:orientation_tegrahedron}. So for a function $\phi \in C^k(\mathcal{N})$, $\phi$ belongs to $C_{1}^k(\mathcal{N})$ if and only if the following holds
\begin{equation}
	\begin{cases}
		\phi^1(0)+\phi^2(0)+\phi^3(0)=0, &  \\
		\phi^1(1)-\phi^5(0)+\phi^6(1)=0, & \\
		\phi^2(1)-\phi^6(0)+\phi^4(1)=0,\\
		\phi^3(1)-\phi^4(0)+\phi^5(1)=0.
	\end{cases}
	\label{eq:boundary}
\end{equation}

Let $l=\arccos (-\frac{1}{3})$ be the length of arcs in $\mathcal{N}$. Then the function $\phi$ becomes an eigenfunction with eigenvalue $\lambda$ if and only if the following holds, 
\begin{equation}
	\begin{cases}
		-\frac{1}{l^2}(\phi^i)''- \phi^i=\lambda \phi^i, & \text{ for }
		i=1,\cdots ,6\\
		(\phi^1)'(0)=(\phi^2)'(0)= (\phi^3)'(0), &\\
		(\phi^1)'(1)=(\phi^5)'(0)= (\phi^6)'(1), & \\
		(\phi^2)'(1)=(\phi^6)'(0)= (\phi^4)'(1),\\
		(\phi^3)'(1)=(\phi^4)'(0)= (\phi^5)'(1).
	\end{cases}
	\label{eq:derivative}
\end{equation}

Here, we have assumed the length of $(\gamma^i)'$ is constant for each $i$.  By minimizing the Rayleigh quotient, we know the lowest eigenvalue should be $-1$, and the corresponding eigenfunction should be constant on each $\gamma^i$.  Suppose $\phi$ is a such function.  This means we suppose $\phi^i=c^i$, which is a constant on $\gamma^i$.  The condition (\ref{eq:derivative}) holds automatically with $\lambda=-1$, and the condition (\ref{eq:boundary}) will become \[ \begin{cases}
		c^1+c^2+c^3=0, &  \\
		c^1-c^5+c^6=0, & \\
		c^2-c^6+c^4=0,\\
		c^3-c^4+c^5=0.
	\end{cases}
\]

This is a system of linear equations, and we can easily find the set of solutions is a subspace of dimension three. 

This example shows, unlike the usual symmetric elliptic operators on surfaces, the principal eigenvalue of $L$ on $\mathcal{N}$ may not be simple. In this case, the dimension of the eigenfunction set with eigenvalue $-1$ is 3.

Another eigenvalue of $L$ we are interested in is $\lambda=0$. The corresponding eigenfunction of it is generated by the rotational on spheres.
For example, $\phi$ defined by
\[
	\begin{cases}
	\phi^1(t)=\phi^2(t)=\phi^3(t)= \sin(lt), &  \\
	\phi^4(t)=\phi^5(t)=\phi^6(t)= \frac{\sin l}{2 \sin \frac{l}{2}}
	\sin \left( \frac{l}{2}-lt \right),
	\end{cases}
\]
will have eigenvalue $0$.

\section{The Spectrum on networks and the Dirichlet-to-Neumann map}%
\label{sec:spectrum_on_networks_and_dirichlet_to_neumann_map}

\subsection{Eigenvalues on networks}%
\label{sub:eigenvalues_on_networks}

Here we will describe the eigenvalues of the operator $L$ where $L=\Delta+d$ for $d \in C^\infty(\mathcal{N})$ from the point of view of symmetric elliptic operators. 
We can define the bilinear form $\mathcal{L}$ on $W^{1,2}_1(\mathcal{N})$ associated with $L$ by 
\begin{equation}
	\mathcal{L}(u,v):=\int_{ \mathcal{N}} \nabla u\cdot \nabla v
	-duv,\quad u,v \in W^{1,2}_1(\mathcal{N}).
	\label{eq:defLBilinearForm}
\end{equation}

In particular, if $d\equiv 1$, then $\mathcal{L}=Q$.

\begin{remark}
	If we also assume $v \in C^k_1(\mathcal{N})$ or $W^{k,2}_1(\mathcal{N})$ for $k\ge 2$ and $v$ satisfies compatibility condition on $\mathcal{P}$, then by integration by parts, we have
	\begin{equation}
		\int_{\mathcal{N}} \nabla u \cdot \nabla v = \int_{\mathcal{Q}} u \frac{\partial v}{\partial \tau}-\int_{ \mathcal{N}} u \Delta v,
		\label{eq:IBPcomp}
	\end{equation}
	since $u\in V_1(\mathcal{P}), \frac{\partial v}{\partial \tau} \in V_2(\mathcal{P})$.
	

	In particular, we can rewrite \eqref{eq:defLBilinearForm} as
	\begin{equation}
		\mathcal{L}(u,v)=-\int_{ \mathcal{N}} vLu+\int_{ \mathcal{Q}} v \frac{\partial u}{\partial \tau}. 
		\label{eq:defLBilinearComp}
	\end{equation}
\end{remark}

To make our theorems more general, we will assume all concepts defined before, including eigenvalues, eigenfunctions, indices, and nullities, are all associated with the operator $L$.

We are interested in the following Dirichlet problem 
\begin{equation}
	\begin{cases}
	Lu=f, & \text{ on } \mathcal{N},\\
	u=g, & \text{ on }\mathcal{Q},\\
	u \in V_1(\mathcal{P}), \frac{\partial u}{\partial \tau}
	\in V_2(\mathcal{P}),
 & \text{ on }\mathcal{P},
	\end{cases}
	\label{eq:Dirichlet_problem}
\end{equation}
with $f \in L^2(\mathcal{N}):= \bigoplus_{i=1}^n L^2(\gamma^i)$, $g=(g_Q)_{Q \in \mathcal{Q}}\in V(\mathcal{Q})$.

This problem is equivalent to the following problem
\begin{equation}
	\begin{cases}
	Lw=\hat{f}, & \text{ on }\mathcal{N}, \\
	w=0, & \text{ on }\mathcal{Q},\\
	w \in V_1(\mathcal{P}), \frac{\partial w}{\partial \tau}
	\in V_2(\mathcal{P})
		 & \text{ on }\mathcal{P},
	\end{cases}
	\label{eq:Dirichlet_problem2}
\end{equation}
if we set $w=u-\hat{g}$ where $\hat{g}$ is any smooth extension of $g$ in $C^\infty_1(\mathcal{N})$ satisfying the compatibility condition on $\mathcal{P}$ and $\hat{f}=f-L\hat{g}$.

We can define the weak solutions to the problem (\ref{eq:Dirichlet_problem}) following the theory of PDEs. 

We say $u \in W^{1,2}_{1}(\mathcal{N})$ weakly solves (\ref{eq:Dirichlet_problem}) if the following holds,
\begin{align}
	\begin{cases}
		\mathcal{L}(u,v)= -\int_{ \mathcal{N}} f v, &\quad  \forall v \in 
		W^{1,2}_{0,1}(\mathcal{N}),\\
	w=g, &\quad \text{on }\mathcal{Q} \text{ in the trace sense.}
	\end{cases}
	\label{eq:weak_sol}
\end{align}

If $f$ and $u$ are sufficient regular, then by integration by parts, we know $u$ solves (\ref{eq:Dirichlet_problem}) if and only if $u$ solves (\ref{eq:Dirichlet_problem}) weakly.

Similarly, we say $w \in W^{1,2}_{1,0}(\mathcal{N})$ weakly solves the problem (\ref{eq:Dirichlet_problem2}) if $\mathcal{L}(w,v)=-\int_{ \mathcal{N}} \hat{f}v$ for all $v \in W^{1,2}_{0,1}(\mathcal{N})$.

To show the solvability of the Dirichlet problem (\ref{eq:Dirichlet_problem2}), let us consider $L_\sigma w= Lw-\sigma w$.
The corresponding bilinear form is written as
\[
	\mathcal{L}_\sigma(w,v):= \int_{ \mathcal{N}} \nabla w\cdot \nabla v +(\sigma-d)wv.
\]

This will define an inner product on the Hilbert space $W^{1,2}_{0,1}(\mathcal{N}) $ when $\sigma$ large enough such that $\sigma-d>2$. Hence by the Riesz representation theorem, we know for any $\hat{f} \in L^2(\mathcal{N})$, there is a unique $w \in W^{1,2}_{0,1} (\mathcal{N})$ such that
\begin{equation}
	\mathcal{L}_\sigma(w,v)=
	-\int_{ \mathcal{N}} \hat{f}v, \quad \forall v \in W^{1,2}_{0,1}
	(\mathcal{N}).
	\label{eq:Lsigma}
\end{equation}

So we can define an operator
\[
	L^{-1}_\sigma:L^2(\mathcal{N})\rightarrow W^{1,2}_{0,1}(\mathcal{N}),\quad L^{-1}_\sigma(\hat{f})=w.
\]

If we take $v=w$ in \eqref{eq:Lsigma} and use Cauchy-Schwarz inequality, we can get a standard elliptic estimate for $w$ as
\[
	\|w\|_{W^{1,2}(\mathcal{N})}\le C\|\hat{f}\|_{L^2(\mathcal{N})}.
\]

Hence, the operator $L_\sigma^{-1}$ is a compact operator on $L^2(\mathcal{N})$ by Sobolev Embedding Theorem. 
Following the standard steps in the spectral theory, we know $L$ has discrete eigenvalues $-1\le \lambda_1\le \lambda_2\le \cdots $ forming a sequence that goes to infinity.

Moreover, the eigenvalues of $L$ can be characterized by the min-max principle by the standard steps in the theory of elliptic PDEs in the following way,
\begin{align}
	\lambda_k= \min _{V_k \subset W^{1,2}_{0,1}(\mathcal{N})}
	\max _{u \in V_k}
	\frac{\mathcal{L}(u,u)}{\|u\|_{L^2(\mathcal{N})}
	^2},
	\label{eq:min_max}
\end{align}
where $V_k$ is any $k$-dimensional subspace of $W^{1,2}_{0,1}(\mathcal{N})$.

\begin{definition}
	\label{def_Eigenspace}
	We define $\mathcal{J}_\lambda= \mathcal{J}_\lambda(L)$ to be the eigenspace of Jacobi operator $L$ with eigenvalue $\lambda$, and we define the following spaces,
\begin{align*}
	\mathcal{J}_0^-=\mathcal{J}_0^-(\mathcal{N}):={} & 
	\bigoplus_{\lambda<0}^{}
	\mathcal{J}_\lambda(L),\\
	\mathcal{J}_0^0=\mathcal{J}_0^0(\mathcal{N}):={} & 
	\mathcal{J}_0(L).
\end{align*}

\end{definition}

Note that by the min-max principle (\ref{eq:min_max}), we have
\[
	\mathrm{Ind}(\mathcal{N})=
	\mathrm{dim}(\mathcal{J}_0^-),\quad 
	\mathrm{Nul}(\mathcal{N})=\mathrm{dim}
	(\mathcal{J}_0^0).
\]

\subsection{The Dirichlet-to-Neumann map}%
\label{sub:dirichlet_to_neumann_map}
\begin{definition}
	\label{def_DeriNullSpace}
	For any $\tilde{\mathcal{Q}} \subset \mathcal{Q}$, the subset of $\mathcal{Q}$, we define 
\[ D_\tau \mathcal{J}_0^0 (\tilde{\mathcal{Q} } ):= \{ v \in V(\tilde{\mathcal{Q}} ): v=\frac{\partial }{\partial \tau}u \text{ on } \tilde{\mathcal{Q}} \text{ for some }u \in \mathcal{J} _0^0\}.\] 
If $\tilde{\mathcal{Q}} = \mathcal{Q}$, then we write $D_\tau\mathcal{J}_0^0 := D_\tau\mathcal{J}_0^0 (\mathcal{Q})$ for short.
\end{definition}

Then by the Fredholm alternative, we have the following result.

\begin{proposition}
	The Dirichlet problem (\ref{eq:Dirichlet_problem}) has a (weak) solution if and only if 
	\[
		\int_{ \mathcal{N}} fv+ \int_{ \mathcal{Q}} g \frac{\partial }{\partial \tau}v=0, \quad \forall v \in \mathcal{J}_0^0
		(\mathcal{N}).
	\]
	
	In addition, this solution is unique up to an addition of $v \in \mathcal{J}_0 ^0$.

	Moreover, for the case $f\equiv 0$, the problem (\ref{eq:Dirichlet_problem}) has a solution if and only if $g \in (D_\tau \mathcal{J}_0^0)^\bot$. 

	Hence for any $\tilde{\mathcal{Q}} \subset \mathcal{Q}$, we can define a map
\begin{align*}
		T:(D_\tau \mathcal{J}_0^0
		(\tilde{\mathcal{Q}} ))^\bot \rightarrow {} & (D_\tau \mathcal{J}
		_0^0(\tilde{\mathcal{Q}})) ^\bot  \\
		g\rightarrow {} & 
		\frac{\partial }{\partial \tau}u
	\end{align*}
	where $u$ is a solution of (\ref{eq:Dirichlet_problem}) with $f\equiv 0$ and giving boundary condition $g$ taking the value zero outside of $\tilde{\mathcal{Q}} $ and $(D_\tau \mathcal{J}_0^0 (\tilde{\mathcal{Q}} ))^\bot $ is the orthogonal complement of $D_\tau \mathcal{J}_0^0 (\tilde{\mathcal{Q} } )$ in the space $V(\tilde{\mathcal{Q}} )$.
	\label{prop:D_N_map}
\end{proposition}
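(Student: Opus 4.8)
The plan is to set up the standard Fredholm alternative for the operator $L_\sigma^{-1}$ constructed in Subsection~\ref{sub:eigenvalues_on_networks} and then translate the abstract solvability criterion into the concrete pairing written in the statement. First I would reduce the inhomogeneous Dirichlet problem~\eqref{eq:Dirichlet_problem} to the zero-boundary problem~\eqref{eq:Dirichlet_problem2} via the substitution $w = u - \hat{g}$, $\hat{f} = f - L\hat{g}$, as already recorded in the excerpt; this puts everything inside the Hilbert space $W^{1,2}_{0,1}(\mathcal{N})$ on which $L_\sigma^{-1}$ is a compact self-adjoint operator. Writing the weak form as $(I - \mu L_\sigma^{-1})w = L_\sigma^{-1}(\text{something})$ for the appropriate shift, the Fredholm alternative for compact self-adjoint operators says the problem is solvable iff the right-hand side is $L^2$-orthogonal to $\ker$, which is exactly $\mathcal{J}_0^0(\mathcal{N})$, and the solution is then unique modulo $\mathcal{J}_0^0$. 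The one genuine computation is to check that this orthogonality condition, when unwound through the substitution $w = u - \hat g$ and an integration by parts against $v \in \mathcal{J}_0^0$, becomes $\int_{\mathcal{N}} f v + \int_{\mathcal{Q}} g\,\frac{\partial}{\partial\tau}v = 0$: here one uses $Lv = 0$ on $\mathcal{N}$, $v = 0$ on $\mathcal{Q}$, and the compatibility condition $\frac{\partial v}{\partial\tau} \in V_2(\mathcal{P})$ together with $u \in V_1(\mathcal{P})$ to kill the interior-junction boundary terms $\int_{\mathcal{P}} u\,\frac{\partial v}{\partial\tau}$ (these pair $V_1(P)$ against $V_2(P) = V_1(P)^\perp$). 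The $\hat g$-dependent terms must be shown to cancel, leaving only the stated intrinsic pairing, which confirms in particular that solvability does not depend on the choice of extension $\hat g$.

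Next I would specialize to $f \equiv 0$: the condition collapses to $\int_{\mathcal{Q}} g\,\frac{\partial}{\partial\tau}v = 0$ for all $v \in \mathcal{J}_0^0$, which by definition of $D_\tau\mathcal{J}_0^0$ is precisely $g \in (D_\tau\mathcal{J}_0^0)^\perp$ inside $V(\mathcal{Q})$. For a subset $\tilde{\mathcal{Q}} \subset \mathcal{Q}$ I would regard a boundary datum $g \in V(\tilde{\mathcal{Q}})$ as an element of $V(\mathcal{Q})$ by extending it by zero, observe that the pairing $\int_{\mathcal{Q}} g\,\frac{\partial}{\partial\tau}v$ then only sees $\frac{\partial}{\partial\tau}v$ on $\tilde{\mathcal{Q}}$, and conclude solvability holds iff $g \in (D_\tau\mathcal{J}_0^0(\tilde{\mathcal{Q}}))^\perp$ as a subspace of $V(\tilde{\mathcal{Q}})$. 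Given such a $g$, pick any solution $u$ of~\eqref{eq:Dirichlet_problem} with this datum; the output $\frac{\partial}{\partial\tau}u$ is read off on $\tilde{\mathcal{Q}}$.

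To see that $T$ is well-defined I must check two things: that $\frac{\partial}{\partial\tau}u$ (restricted to $\tilde{\mathcal{Q}}$) is independent of the chosen solution $u$, and that it again lands in $(D_\tau\mathcal{J}_0^0(\tilde{\mathcal{Q}}))^\perp$. The first is immediate since two solutions differ by an element of $\mathcal{J}_0^0$, and for $v_1, v_2 \in \mathcal{J}_0^0$ the difference $\frac{\partial}{\partial\tau}(v_1 - v_2)$ vanishes on $\mathcal{Q}$ — wait, that is not automatic, so instead I would argue that $\frac{\partial}{\partial\tau}v$ on $\tilde{\mathcal{Q}}$ for $v \in \mathcal{J}_0^0$ is exactly the subspace we are quotienting the image by, hence $T$ is well-defined as a map into the quotient; but since the target is stated as the orthogonal complement rather than a quotient, I would instead project: define $Tg$ to be the orthogonal projection of $\frac{\partial}{\partial\tau}u\big|_{\tilde{\mathcal{Q}}}$ onto $(D_\tau\mathcal{J}_0^0(\tilde{\mathcal{Q}}))^\perp$, which is independent of the choice of $u$ because the ambiguity $\frac{\partial}{\partial\tau}v\big|_{\tilde{\mathcal{Q}}}$ lies in $D_\tau\mathcal{J}_0^0(\tilde{\mathcal{Q}})$ by definition. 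For the second, I would use the Green-type identity $\int_{\tilde{\mathcal{Q}}} (\frac{\partial}{\partial\tau}u)\, v = \int_{\mathcal{N}}(u\,Lv - v\,Lu) + (\text{junction terms}) = 0$ for $v \in \mathcal{J}_0^0$, the junction terms vanishing again by the $V_1/V_2$ pairing and the boundary conditions on $u$ and $v$; this shows $\frac{\partial}{\partial\tau}u \perp D_\tau\mathcal{J}_0^0(\tilde{\mathcal{Q}})$ directly, so the projection step is actually unnecessary.

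The main obstacle I anticipate is bookkeeping rather than conceptual: carefully tracking all the boundary contributions at $\mathcal{P}$ (the junction points) in the integration-by-parts identities and verifying each one is annihilated by the combination "$u \in V_1(\mathcal{P})$, $\frac{\partial u}{\partial\tau} \in V_2(\mathcal{P})$, and likewise for $v$", plus confirming the $\hat g$-terms cancel so that the criterion is genuinely intrinsic. The functional-analytic core (compactness of $L_\sigma^{-1}$, spectral theorem, Fredholm alternative) is entirely standard and already invoked in the excerpt, so I would cite it rather than reprove it.
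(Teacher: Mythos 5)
Your overall strategy coincides with the paper's: reduce to the homogeneous boundary problem, apply the Fredholm alternative for $L_\sigma^{-1}$, unwind the orthogonality condition by integration by parts (using $Lv=0$, $v=0$ on $\mathcal{Q}$, and the $V_1(\mathcal{P})$/$V_2(\mathcal{P})$ pairing to kill the junction terms), and specialize to $f\equiv 0$. All of that is fine. The problem is your final step. The Green-type identity you invoke,
\[
\int_{\mathcal{N}}\bigl(u\,Lv - v\,Lu\bigr) = \int_{\mathcal{P}\cup\mathcal{Q}}\Bigl(u\,\tfrac{\partial v}{\partial\tau} - v\,\tfrac{\partial u}{\partial\tau}\Bigr),
\]
pairs the Neumann trace of $u$ against the \emph{Dirichlet} trace of $v$ on $\tilde{\mathcal{Q}}$, and since $v\in\mathcal{J}_0^0$ vanishes on $\mathcal{Q}$ the term $\int_{\tilde{\mathcal{Q}}}v\,\frac{\partial u}{\partial\tau}$ is zero trivially and carries no information. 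What the identity actually yields is $\int_{\tilde{\mathcal{Q}}}g\,\frac{\partial v}{\partial\tau}=0$, i.e.\ the solvability condition $g\in(D_\tau\mathcal{J}_0^0(\tilde{\mathcal{Q}}))^\perp$ you already had. The statement you want, $\frac{\partial u}{\partial\tau}|_{\tilde{\mathcal{Q}}}\perp D_\tau\mathcal{J}_0^0(\tilde{\mathcal{Q}})$, is an orthogonality of two \emph{Neumann} data in the Euclidean inner product on $V(\tilde{\mathcal{Q}})$, which Green's identity does not control; the paper explicitly notes that in general $\frac{\partial u}{\partial\tau}|_{\tilde{\mathcal{Q}}}$ is \emph{not} orthogonal to $D_\tau\mathcal{J}_0^0(\tilde{\mathcal{Q}})$. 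So your concluding sentence that ``the projection step is actually unnecessary'' is false, and if you drop the projection the map neither lands in the stated target nor is independent of the chosen solution.

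The fix is the fallback you yourself wrote two sentences earlier: define $T(g)$ as the orthogonal projection of $\frac{\partial u}{\partial\tau}|_{\tilde{\mathcal{Q}}}$ onto $(D_\tau\mathcal{J}_0^0(\tilde{\mathcal{Q}}))^\perp$; this is well-defined because two solutions differ by some $v\in\mathcal{J}_0^0$ whose Neumann data on $\tilde{\mathcal{Q}}$ lies in $D_\tau\mathcal{J}_0^0(\tilde{\mathcal{Q}})$ by definition and is therefore killed by the projection. This is exactly what the paper does in slightly different words: it picks the unique $h\in D_\tau\mathcal{J}_0^0(\tilde{\mathcal{Q}})$ with $\frac{\partial u}{\partial\tau}+h\perp D_\tau\mathcal{J}_0^0(\tilde{\mathcal{Q}})$, realizes $h=\frac{\partial v}{\partial\tau}|_{\tilde{\mathcal{Q}}}$ for some $v\in\mathcal{J}_0^0$, and sets $T(g)=\frac{\partial (u+v)}{\partial\tau}|_{\tilde{\mathcal{Q}}}$, which is precisely that projection. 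Keep the projection and your argument is complete.
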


Based on Proposition \ref{prop:D_N_map}, we can make the following definition.
\begin{definition}
	\label{def_DNmap}
	For any $\tilde{\mathcal{Q}}\subset \mathcal{Q}$, we define the \textit{Dirichlet-to-Neumann map} $T$ of $\mathcal{N}$ with respect to $\tilde{\mathcal{Q}}$ to be the map defined in Proposition \ref{prop:D_N_map}.
\end{definition}
\begin{remark}
	When we talk about the Dirichlet-to-Neumann map $T$ of a network $\mathcal{N}$, we always assume this is a map defined on boundary $\mathcal{Q}$ or a subset of $\mathcal{Q}$.
\end{remark}

\begin{proof}[Proof of Proposition \ref{prop:D_N_map}]
	Let $\hat{g}$ be any smooth extension of $g$ in $C_1^\infty(\mathcal{N})$ with compatibility condition and $\hat{f}=f-L\hat{g}$.
	Applying the Fredholm alternative for the operator $L_\sigma^{-1}$ for some $\sigma$ large enough, we know the problem (\ref{eq:Dirichlet_problem2}) has a weak solution if and only if
	\begin{equation}
		\int_{ \mathcal{N}} \hat{f} v=0, \quad \forall v \in \mathcal{J}_0^0.
		\label{eq:pfSolvableCondition}
	\end{equation}

	This solution is unique up to an addition of $v \in \mathcal{J}_0^0 $.
	Note that if $v \in \mathcal{J}_0^0$, using \eqref{eq:defLBilinearComp}, we know the condition \eqref{eq:pfSolvableCondition} is equivalent to 
	\begin{align*}
		0={} & \int_{ \mathcal{N}} fv-vL\hat{g}=\int_{ \mathcal{N}} fv+\mathcal{L}(v,\hat{g})-\int_{ \mathcal{Q}} v \frac{\partial \hat{g}}{\partial \tau}\\
		={} & \int_{ \mathcal{N}}(fv-\hat{g}Lv)+\int_{\mathcal{Q}} (\hat{g} \frac{\partial v}{\partial \tau}-v \frac{\partial \hat{g}}{\partial \tau})\\
		={}& \int_{\mathcal{N}}^{} fv+\int_{ \mathcal{Q}} g \frac{\partial v}{\partial \tau}.
	\end{align*}
	Here, for the last equality, we have used $Lv=0$ on $\mathcal{N}$ and $v=0$ on $\mathcal{Q}$.
	This is precisely the condition of the solvability of the problem (\ref{eq:Dirichlet_problem}). 

	For the case $f\equiv 0$, (\ref{eq:Dirichlet_problem}) has a solution if and only if 
	\[
		\int_{ \mathcal{Q}} g \frac{\partial v}{\partial \tau}=0
		\Longleftrightarrow g \in (D_\tau \mathcal{J}_0^0 )^\bot.
	\]

	This solution is smooth since $f\equiv 0$ is smooth. So it is not just a weak solution. 

	Now let $g \in (D_\tau \mathcal{J}_0^0 (\tilde{\mathcal{Q}} )) ^\bot $. We extend $g$ to an element $\hat{g}$ in $V(\mathcal{Q})$ by taking zeros outside of $\tilde{\mathcal{Q}}$. Since $\hat{g} \in (D_\tau\mathcal{J}_0^0)^\bot $, we can find a solution of (\ref{eq:Dirichlet_problem}) with $f\equiv 0$ with given boundary condition $\hat{g}$.
	Let $u$ be one such solution.
	In general, we do not have $\left.\frac{\partial u}{\partial \tau}\right|_{\tilde{\mathcal{Q}}}\bot D_\tau\mathcal{J}_0^0(\tilde{\mathcal{Q}})$. But by the properties of orthogonal projections, we know there is a unique $h \in D_\tau\mathcal{J}_0^0(\tilde{\mathcal{Q}})$ such that
	\[
		\left.\frac{\partial u}{\partial \tau}\right|_{\tilde{\mathcal{Q}}}+h\bot D_\tau\mathcal{J}_0^0(\tilde{\mathcal{Q}}).
	\]

	By the definition of $D_\tau\mathcal{J}_0^0(\tilde{\mathcal{Q}})$, there is a function $v \in \mathcal{J}_0^0$ such that $h=\left.\frac{\partial v}{\partial \tau}\right|_{\tilde{\mathcal{Q}}}$. We will denote $u_g$ as $u+v$ with $u,v$ defined above. In summary, $u_g \in C^{\infty}_1(\mathcal{N})$ will solve the following problem by the construction of $u_g$,
	\begin{align}
	\begin{cases}
	Lu_g=0, & \text{ on }\mathcal{N}, \\
	u_g=g, &  \text{ on }\tilde{\mathcal{Q}},\\
	u_g=0, & \text{ on }\mathcal{Q}\backslash \tilde{\mathcal{Q}},\\
	\frac{\partial u_g}{\partial \tau} \in V_2(\mathcal{P}),\quad &
	\text{ on }\mathcal{P}\ (\text{compatibility conditions}),\\
	\frac{\partial u_g}{\partial \tau} \in (D_\tau\mathcal{J}_0^0(\tilde{\mathcal{Q}}))^\bot ,\quad &\text{ on }\tilde{\mathcal{Q}}.
	\end{cases}
	\label{eq:Lextension}
	\end{align}
	
	Hence, we can define 
	\[
		T(g):=\left.\frac{\partial u_g}{\partial \tau}\right|_{\tilde{\mathcal{Q}}}.
	\]

	To show $T$ is well-defined, we need to check if there is another solution $v_g \in C^{\infty}_1(\mathcal{N})$ to the problem (\ref{eq:Lextension}), then we have $\left.\frac{\partial u_g}{\partial \tau}\right|_{\tilde{\mathcal{Q}}}=\left.\frac{\partial v_g}{\partial \tau}\right|_{\tilde{\mathcal{Q}}}$. Indeed, if we set $w=u_g-v_g$, then $w \in C^{\infty}_1(\mathcal{N})$ solves the following problem
	\begin{align}
		\begin{cases}
		Lw=0, & \text{ on }\mathcal{N}, \\
		w=0, & \text{ on }\mathcal{Q},\\
		\frac{\partial u}{\partial \tau} \in V_2(\mathcal{P}),&\text{ on }\mathcal{P},\\
		\frac{\partial w}{\partial \tau} \in (D_\tau\mathcal{J}_0^0(\tilde{\mathcal{Q}}))^\bot ,\quad &\text{ on }\tilde{\mathcal{Q}}.
		\end{cases}
		\label{eq:pf_unq}
	\end{align}

	Note that first three conditions for $w$ in problem (\ref{eq:pf_unq}) imply $w \in \mathcal{J}_0^0$. Hence $\left.\frac{\partial w}{\partial \tau}\right|_{\tilde{\mathcal{Q}}} \in D_\tau\mathcal{J}_0^0(\tilde{\mathcal{Q}})$ by the definition of $D_\tau\mathcal{J}_0^0(\tilde{\mathcal{Q}})$. Combining with the fourth condition in problem (\ref{eq:pf_unq}), we know $\left.\frac{\partial w}{\partial \tau}\right|_{\tilde{\mathcal{Q}}}=0$. So the $T$ is indeed well-defined.  
\end{proof}

In the proof of Proposition \ref{prop:D_N_map}, we have shown for any $g \in (D_\tau\mathcal{J}_0^0 (\tilde{\mathcal{Q}}))^\bot $, there is an extension $u_g \in C^{\infty}_1(\mathcal{N})$ of $g$ solving the problem (\ref{eq:Lextension}).
But the choice of $u_g$ might not be unique.
We need to define a new function space to give a canonical way of the definition of the extension $u_g$.

\begin{definition}
	\label{def_NullSpace00}
	We define the function space $\mathcal{I}_0(\mathcal{\tilde{Q}})$ by
	\[
		\mathcal{I}_0(\tilde{\mathcal{Q}} ):=\{ v \in \mathcal{J}_0^0 , \frac{\partial v }{\partial \tau}=0 \text{ on } \tilde{\mathcal{Q}}\}.
	\]
\end{definition}

Based on Proposition \ref{prop:D_N_map}, we can define $u_g$, the \textit{$L$-extension} of $g$.

\begin{proposition}
	\label{prop_Lext}
	For any $g \in (D_\tau \mathcal{J}_0^0(\mathcal{\tilde{Q}}))^\bot $, there exists a unique $u_g$, called the \textit{$L$-extension} of $g$, such that $u_g$ solves Problem \eqref{eq:Lextension} and
	$u_g \bot \mathcal{I}_0(\mathcal{\tilde{Q}})$.
	Here, $u_g \bot \mathcal{I}_0(\mathcal{\tilde{Q}})$ means $u_g$ is orthogonal to $\mathcal{I}_0(\mathcal{\tilde{Q}})$ with respect to the standard inner product defined on space $L^2(\mathcal{N})$.
\end{proposition}

\begin{proof}
	Indeed, the proof of Proposition \ref{prop:D_N_map} already showed Problem \eqref{eq:Lextension} has a solution, and such solution is unique up to an addition of the function which solves Problem \eqref{eq:pf_unq}.
	But the solution set to the Problem \eqref{eq:pf_unq} is exactly $\mathcal{I}_0(\tilde{\mathcal{Q}})$.
	Hence, by adding a suitable element in $\mathcal{I}_0(\mathcal{\tilde{Q}})$, we can find a unique $u_g$ solving Problem \eqref{eq:Lextension} and $u_g \bot \mathcal{I}_0(\mathcal{\tilde{Q}})$.
\end{proof}
It is easy to verify the $L$-extension is a linear map from $(D_\tau\mathcal{J}_0^0(\tilde{\mathcal{Q}}))^\bot $ to $C^{\infty}_1(\mathcal{N})$.
Hence, we know $T:(D_\tau\mathcal{J}_0^0 (\tilde{\mathcal{Q}} )) ^\bot \rightarrow (D_\tau\mathcal{J}_0^0 (\tilde{\mathcal{Q}} ))^\bot$ is a linear map. Moreover, $T$ is indeed a self-adjoint linear transformation on the finite-dimensional space $(D_\tau \mathcal{J}_0^0 (\tilde{\mathcal{Q}} ))^\bot$ after a simple calculation, so $T$ has finite real eigenvalues, which we write them as $\sigma_1\le \sigma_2\le \cdots \le \sigma _{\mathrm{dim}(D_\tau \mathcal{J}_0^0  (\tilde{\mathcal{Q}} ))^\bot }$.

At last, we mention a result that the eigenvalues of $\mathcal{N}$ only rely on the image of $\mathcal{N}$ in $\mathbb{S}^2$ in some sense. 

Let us see an example first. Given a curve $\gamma:[0,1]\rightarrow \mathbb{S}^2$, we can think this curve as a network of course with $\mathcal{P}=\emptyset $. On the other hand, we can cut this curve into two parts in the sense that we choose $\gamma_1=\gamma|_{[0,a]}, \gamma_2=\gamma|_{[a,1]}$ for $0<a<1$. Now if we consider new network $\mathcal{N}_1:=(\gamma_1,\gamma_2)$ with $\mathcal{P}_1=\{ \gamma(a) \}$, then the network $\mathcal{N}_1$ still represents the curve $\gamma$ as they have the same image. 
We call $\gamma_1,\gamma_2$ as the refinement of $\gamma$.
Now we say a \textit{one-step refinement} of network $\mathcal{N}$ is the new network $\mathcal{N}_1$ that all of $\gamma^i$ coming from $\mathcal{N}$ but only one $\gamma^i$ is replaced by $\gamma_1^i,\gamma_2^i$, the refinement of $\gamma^i$. Note that $\mathcal{P}_1$ will have one more element than $\mathcal{P}$.
We say two networks $\mathcal{N}_1, \mathcal{N}_2$ are one-step congruent to each other if $\mathcal{N}_1$ is the one-step refinement of $\mathcal{N}_2$ or vice versa. We say $\mathcal{N}_1, \mathcal{N}_2$ are congruent to each other if there is a finite sequence of one-step refinements starting at $\mathcal{N}_1$ and ending at $\mathcal{N}_2$.

Clearly, if two networks are congruent, then they will have the same image in $\mathbb{S}^2$. Moreover, for two embedded networks, they are congruent if and only if they have the same image.

We have the following proposition.
\begin{proposition}
	\label{prop_congruent}
	If $\mathcal{N}_1,\mathcal{N}_2$ are congruent to each other, then they have the same eigenvalues (counting multiplicity).
	
\end{proposition}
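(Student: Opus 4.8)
The plan is to reduce to a single elementary move and then compare the two spectra variationally. Since congruence is by definition a finite chain of one-step congruences and the relation ``having the same eigenvalues with multiplicity'' is transitive, it suffices to treat the case in which $\mathcal{N}_1$ is a one-step refine of $\mathcal{N}_2$: some arc $\gamma^i$ of $\mathcal{N}_2$ is cut at an interior point $P_0:=\gamma^i(a)$, $0<a<1$, into $\gamma_1^i:=\gamma^i|_{[0,a]}$ and $\gamma_2^i:=\gamma^i|_{[a,1]}$, while all other arcs, the set $\mathcal{Q}$, and the coefficient $d$ of $L$ are left untouched, and $\mathcal{P}_1=\mathcal{P}_2\cup\{P_0\}$ with the unit normals on $\gamma_1^i,\gamma_2^i$ taken to be the restrictions of $\nu^i$. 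I would then build an explicit linear isomorphism $R:W^{1,2}_{0,1}(\mathcal{N}_2)\to W^{1,2}_{0,1}(\mathcal{N}_1)$ that preserves both the bilinear form $\mathcal{L}$ and the $L^2$ inner product, and conclude by the min--max formula (\ref{eq:min_max}) that $\lambda_k(\mathcal{N}_1)=\lambda_k(\mathcal{N}_2)$ for every $k$.

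The map $R$ is the obvious one: it fixes every component living on an unsplit arc and sends $\phi^i$ to the pair $(\phi^i|_{[0,a]},\phi^i|_{[a,1]})$, the inverse being ``glue the two pieces''. The first step is to check that $R$ is a well-defined bijection between the two Hilbert spaces, and here the only nontrivial point is the admissibility condition at the new junction $P_0$. Since $j_{P_0}=2$ and $\nu^{\gamma_1^i}_{P_0}=\nu^{\gamma_2^i}_{P_0}$ by our choice of normals, one computes $V_1(P_0)=\{(v,v):v\in\mathbb{R}\}$, so $\phi|_{P_0}\in V_1(P_0)$ says precisely that the two traces of $\phi$ at $P_0$ coincide --- which is exactly the condition under which two $W^{1,2}$ functions on $[0,a]$ and $[a,1]$ glue to a $W^{1,2}$ function on $[0,1]$. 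At every point of $\mathcal{P}_2$ the relevant normals, hence the spaces $V_1$, are literally the same as before, and $\mathcal{Q}_1=\mathcal{Q}_2$, so $R$ is indeed bijective onto $W^{1,2}_{0,1}(\mathcal{N}_1)$. The second step is then trivial: integration over $\mathcal{N}_1$ is integration over $\mathcal{N}_2$ with the integral over $\gamma^i$ merely split at $P_0$, so $\mathcal{L}(R\phi,R\psi)=\mathcal{L}(\phi,\psi)$ and $\|R\phi\|_{L^2(\mathcal{N}_1)}=\|\phi\|_{L^2(\mathcal{N}_2)}$; thus the Rayleigh quotients agree and (\ref{eq:min_max}) gives equality of all $\lambda_k$, which is the assertion.

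One could instead match eigenfunctions directly, and it is worth recording this picture because it shows where the compatible condition enters. Given an eigenfunction $\phi$ on $\mathcal{N}_2$ with eigenvalue $\lambda$, its splitting $R\phi$ solves the eigenvalue ODE on each piece and is $C^1$ across $P_0$: value matching is the $V_1(P_0)$ condition above, and derivative matching is the compatible condition at $P_0$, since $V_2(P_0)$ is the anti-diagonal and the two exterior tangents satisfy $\tau^1_{P_0}=-\tau^2_{P_0}$, which supplies the sign turning the abstract relation into continuity of the arclength derivative at $a$. Conversely an eigenfunction on $\mathcal{N}_1$ with eigenvalue $\lambda$ has its two pieces on $\gamma_1^i,\gamma_2^i$ agreeing to first order at $P_0$, and two solutions of the linear second-order ODE $-u''=(\lambda+1)u$ in arclength that agree to first order at a point agree to all orders, so they glue to a smooth eigenfunction on $\mathcal{N}_2$; this yields a linear bijection $\mathcal{J}_\lambda(L;\mathcal{N}_2)\cong\mathcal{J}_\lambda(L;\mathcal{N}_1)$ for each $\lambda$. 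In either approach the work is just careful bookkeeping with the definitions of $V_1$, $V_2$ and the compatible condition at the artificial double point; the one place to be cautious is the orientation of the exterior tangents at $P_0$ and the consistent choice of unit normal there. The remaining ingredients --- additivity of the integrals, the $W^{1,2}$ gluing lemma, and uniqueness for the eigenvalue ODE --- are standard, so I do not expect a serious obstacle.
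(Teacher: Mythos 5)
Your proposal is correct and takes essentially the same approach as the paper: reduce to a one-step refine and identify functions on the two networks by cutting and gluing at the new double point $P_0$, where $V_1(P_0)$ is the diagonal (value matching) and the compatible condition at $P_0$ is matching of the arclength derivative. The paper's own proof is a briefer version of your third paragraph (cut eigenfunctions, glue weak solutions, invoke regularity), while your primary min--max argument via the form-preserving isomorphism $R$ is an equally valid, slightly cleaner packaging of the same idea.
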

\begin{proof}
	By the definition of congruent, we only consider the one-step refinement case. But for this case, this is easy to see since the eigenfunctions on $\mathcal{N}$ can be cut like cutting the curve (restriction to subcurves). Conversely, we can connect two functions reversely by the compatibility conditions. This will give a weak solution after the connection. Hence by the results of regularity, we know it should be a classical solution, too.
\end{proof}

Moreover, we have the following result.
\begin{proposition}
	The eigenvalues of the network do not rely on the choice of the orientation on each curve $\gamma^i$ in $\mathcal{N}$. That is, if we choose different normal vector fields $\hat{\nu}^j$ for $\gamma^j$, they still give the same eigenvalues.
	
\end{proposition}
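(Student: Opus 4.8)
The plan is to construct, for any two admissible choices of unit normal fields on the curves of $\mathcal{N}$, an explicit linear isometry between the associated function spaces that commutes with $L$ and carries one eigenvalue problem onto the other. First I would note that along a connected regular curve in the oriented surface $\mathbb{S}^2$ a continuous unit normal field is determined up to a single global sign; hence replacing $\nu^i$ by $\hat{\nu}^i$ amounts to choosing $\epsilon_i \in \{+1,-1\}$ with $\hat{\nu}^i = \epsilon_i \nu^i$ for each $i = 1,\dots,n$. I would also record that $\tau$, the arclength Laplacian $\Delta$, the coefficient $d$, and the stationarity relation $\sum_j \tau_P^j = 0$ do not involve the normal, so the operator $L = \Delta + d$ and the associated quadratic forms $Q$, $\mathcal{L}$ are literally unchanged; only the spaces $V_1(P)$, $V_2(P)$, and hence $W^{1,2}_1(\mathcal{N})$, $W^{1,2}_{0,1}(\mathcal{N})$, $C^\infty_{0,1}(\mathcal{N})$ and the compatible condition, depend on the choice.

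Next I would define $\Psi$ on functions on $\mathcal{N}$ by $(\Psi\phi)^i := \epsilon_i \phi^i$. This is a componentwise linear isometry of $L^2(\mathcal{N})$ and of $W^{1,2}(\mathcal{N})$; since $L$ acts componentwise and linearly, $\Psi L = L\Psi$, and $Q(\Psi\phi,\Psi\psi) = Q(\phi,\psi)$, $\int_{\mathcal{N}}(\Psi\phi)(\Psi\psi) = \int_{\mathcal{N}}\phi\psi$. It clearly preserves the vanishing condition on $\mathcal{Q}$, being a sign change of each boundary value. The substantive point is the interior vertices: at $P \in \mathcal{P}$ the diagonal sign change $\Phi_P$ on the coordinates of $V(P)$ indexed by the curves $\gamma^j_P$ is an orthogonal automorphism of $V(P)$, and from $X\cdot\hat{\nu}^j_P = \epsilon_j\,(X\cdot\nu^j_P)$ one reads off $\Phi_P(V_1(P)) = \hat{V}_1(P)$ and therefore, by orthogonality, $\Phi_P(V_2(P)) = \hat{V}_2(P)$. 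Since $(\Psi\phi)|_P = \Phi_P(\phi|_P)$ and $\frac{\partial}{\partial\tau}(\Psi\phi)|_P = \Phi_P\bigl(\frac{\partial}{\partial\tau}\phi|_P\bigr)$ because $\tau$ is unchanged, $\Psi$ restricts to isomorphisms $W^{1,2}_1(\mathcal{N}) \to \hat{W}^{1,2}_1(\mathcal{N})$ and $W^{1,2}_{0,1}(\mathcal{N}) \to \hat{W}^{1,2}_{0,1}(\mathcal{N})$, and carries the compatible condition $\frac{\partial}{\partial\tau}\phi \in V_2(\mathcal{P})$ to its $\hat{\nu}$-analogue.

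Finally I would combine these: if $-L\phi = \lambda\phi$ with $\phi \in C^\infty_{0,1}(\mathcal{N})$ satisfying the compatible condition for $\nu$, then $\Psi\phi$ is smooth componentwise, solves $-L(\Psi\phi) = \lambda(\Psi\phi)$, lies in the $\hat{\nu}$-version of $C^\infty_{0,1}(\mathcal{N})$, and satisfies the compatible condition for $\hat{\nu}$; the inverse map is given by the same formula, so $\Psi$ is a linear isomorphism $\mathcal{J}_\lambda \to \hat{\mathcal{J}}_\lambda$ for every $\lambda$. Hence the two choices of normal fields yield the same spectrum with the same multiplicities, and by the min-max characterisation \eqref{eq:min_max} the same index and nullity. (Equivalently, one may argue entirely at the weak level, using that $\Psi$ is a bijective isometry $W^{1,2}_{0,1}(\mathcal{N}) \to \hat{W}^{1,2}_{0,1}(\mathcal{N})$ preserving both $Q$ and the $L^2$ inner product.) I do not expect a genuine obstacle here; the only step needing care is the vertex bookkeeping — verifying that the diagonal sign change really does intertwine the old and new $V_1$, $V_2$ and compatible conditions — together with the preliminary observation that an arbitrary change of normal field reduces to a choice of signs $(\epsilon_i)$.
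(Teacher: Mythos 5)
Your proposal is correct and follows essentially the same route as the paper: the paper's proof also defines $\hat{\phi}^i=\epsilon_i\phi^i$ according to whether $\hat{\nu}^i=\pm\nu^i$ and checks that this sign-change map carries eigenfunctions to eigenfunctions with the same eigenvalue. Your version merely spells out the vertex bookkeeping (that the diagonal sign change intertwines $V_1(P)$, $V_2(P)$ and the compatible conditions) which the paper leaves as an easy verification.
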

The proof for this proposition is quite simple since we can give a direct one-to-one map between their admissible function spaces when giving different normal vector fields. For any eigenfunction $\phi$ of $L$ with normal vector field $\nu$ on $\mathcal{N}$, if we have another unit normal vector field $\hat{\nu}$ on $\mathcal{N}$, we construct $\hat{\phi}$ by choosing $\hat{\phi}_i=\phi_i$ if $\hat{\nu}_i=\nu_i$ and $\hat{\phi}_i=-\phi_i$ if $\hat{\nu}_i=-\nu_i$. It is easy to verify $\hat{\phi}$ is an eigenfunction with the same eigenvalue with unit normal vector field $\hat{\nu}_i$.

These two propositions show that the eigenvalues of an embedded network are entirely determined by its image.

\section{Index Theorem for networks}%
\label{sec:index_theorem_for_networks}

This section will show that one can compute the index and nullity of a large network by dividing it into small networks.

To be more precise, let $\mathcal{N}$ be a network in $\mathbb{S}^2$. Let $\mathcal{N}_i,i=1,\cdots ,l$ be networks in $\mathbb{S}^2$ with interior endpoint set $\mathcal{P}_i$ and boundary endpoint set $\mathcal{Q}_i$.
We say $\{ \mathcal{N}_i \}_{i=1,\cdots , l}$ is a \textit{partition} of $\mathcal{N}$ if the following holds.
\begin{itemize}
	\item $\mathcal{N}=\bigsqcup_{i=1}^{l} \mathcal{N}_i$. $\mathcal{N}$ is the disjoint union of $\mathcal{N}_i$.
		
	\item $\mathcal{P}$ can be divided into two disjoint sets $\mathring{ \mathcal{P}} $ and $\overline{\mathcal{P} }$. $\mathcal{Q}_i$ can be divided into two disjoint sets $ \tilde{\mathcal{Q}}_i$ and $\overline{\mathcal{Q}}_i$ for each $i=1,\cdots ,l$. These divisions make the following holds.
		\begin{itemize}
			\item $\mathring {\mathcal{P}}= \bigsqcup_ {i=1}^{l}\mathcal{P}_i$.
			\item $\overline{\mathcal{P}}= \bigcup_{i=1}^{l} \tilde{\mathcal{Q}}_i$.
			\item $\mathcal{Q}=\bigcup_{i=1}^{l }\overline{\mathcal{Q}}_i$.
		\end{itemize}
\end{itemize}

Given a network $\mathcal{N}$, we will cut it into several small networks along some interior endpoint set $\overline{\mathcal{P}}$ in order to compute its index and nullity.

\subsection{Dirichlet-to-Neumann map with respect to a partition}%
\label{sub:dirichlet_to_neumann_on_mathcalp_}

Here we want to define a Dirichlet-to-Neumann map with respect to the partition $\mathcal{N}=\cup _{i=1}^l \mathcal{N}_l$ using the Dirichlet-to-Neumann map from the subnetworks.

Recall that the Dirichlet-to-Neumann map on $\mathcal{N}_i$ with respect to $\tilde{\mathcal{Q}}_i$ is defined as the map
\begin{equation}
	T:(D_\tau \mathcal{J}_0^0 (\tilde{\mathcal{Q}}_i))
	^\bot \rightarrow (D_\tau \mathcal{J}_0^0 (\tilde{\mathcal{Q}}_i))
	^\bot, g\rightarrow \frac{\partial u_g}{\partial \tau},
	\label{eq:DNmapNoProjection}
\end{equation}
where $u_g$ is the $L$-extension of $g$ with respect to $\tilde{\mathcal{Q}}_i$.

Note that $V(\overline{\mathcal{P}})\cong \bigoplus_{i=1}^{l}V(\tilde{\mathcal{Q}}_i)$ and $(D_\tau\mathcal{J}_0^0(\tilde{\mathcal{Q}}_i)) ^\bot  \subset V(\tilde{\mathcal{Q}}_i)$, we can define $\hat{T}$ on $\bigoplus_{i=1}^{l} (D_\tau\mathcal{J}_0^0(\tilde{\mathcal{Q}}_i))^\bot $ using $T$ on each $\mathcal{N}_i$ by 
\begin{align}
	\hat{T}: \bigoplus_{i=1}^{l}(D_\tau\mathcal{J}_0^0
	(\tilde{\mathcal{Q}}_i))^\bot {}&\rightarrow \bigoplus_{i=1}^{l}(D_\tau\mathcal{J}_0^0
	(\tilde{\mathcal{Q}}_i))^\bot \nonumber \\
	(g_1,\cdots ,g_l){}& \rightarrow (T(g_1),\cdots ,T(g_l))
	\label{eq:That}
\end{align}

But still, we prefer to define a map related to the space $V_1(\overline{\mathcal{P}})$.
\begin{definition}
	\label{def_V1bar}
	We define a subspace of $V_1(\mathcal{\overline{P}})$ by
	\[\overline{V}_1(\overline{\mathcal{P}} ):= \bigoplus_{i=1}^{l}(D_\tau\mathcal{J}_0^0 (\tilde{\mathcal{Q}}_i))^\bot \cap V_1(\overline{\mathcal{P}}).\]
\end{definition}
	We write $\text{Proj}_1:V(\overline{\mathcal{P}} )\rightarrow V_1(\overline{\mathcal{P}})$ as the orthogonal projection to the space $V_1(\overline{\mathcal{P}})$.
Restricting it to $\bigoplus_{i=1}^{l} (D_\tau\mathcal{J}_0^0(\tilde{\mathcal{Q}}_i))^\bot$, we know $\text{Proj}_1: \bigoplus_{i=1}^{l}(D_\tau\mathcal{J}_0^0 (\tilde{\mathcal{Q}}_i))^\bot \rightarrow \overline{V}_1(\overline{\mathcal{P}})$ is an orthogonal projection.

\begin{definition}
	\label{def_DNpartition}
	We define a \textit{Dirichlet-to-Neumann map $\overline{T}: \overline{V}_1(\overline{\mathcal{P}}) \rightarrow \overline{V}_1 (\overline{\mathcal{P}})$ with respect to the partition} $\mathcal{N}=\cup _{i=1}^l \mathcal{N}_l$ by
\begin{equation}
	\overline{T}(g):= \text{Proj}_1(\hat{T}(g)).
	\label{eq:Tbar}
\end{equation}
\end{definition}

\begin{remark}
	Note that if we write a map as $\overline{T}$, we always mean the map defined in Definition \ref{def_DNpartition}.
	Instead, if we have a map $T$, the one without the bar, then we mean this is a Dirichlet-to-Neumann map defined in Definition \ref{def_DNmap}.
\end{remark}

We can find that $\overline{T}$ is still a self-adjoint linear transformation on the space $\overline{V}_1(\overline{\mathcal{P}})$.
So it has finite real eigenvalues. 
We use $E_\sigma$ to denote the subspace of $\overline{V}_1 (\overline{\mathcal{P}})$ with eigenvalue $\sigma$ with respect to the operator $\overline{T}$.

We define the following subspaces of $\overline{V}_1(\overline{\mathcal{P}})$ by 
\begin{itemize}
	\item $E^-(\overline{T}):= \bigoplus_{\sigma<0}^{} E_\sigma$. $\overline{T}$ is negative definite on this space.
	\item $E^0(\overline{T}):= E_0$, $\overline{T}$ is zero on this space.
\end{itemize}

So the index and nullity of $\overline{T}$ can be written as follows,
\[ \mathrm{Ind}(\overline{T}):= \mathrm{dim} E^-(\overline{T}),\quad\mathrm{Nul}(\overline{T}):= \mathrm{dim}E^0(\overline{T}).\]


Note that eigenvalues of $\overline{T}$ can also be characterized by the min-max principle as
\[
	\sigma_k(\overline{T})= \min _{V_k \subset \overline{V}_1 (\overline{\mathcal{P}})} \max _{g \in V_k} \frac{(\overline{T}(g),g)}{(g,g)},
\] 
where $(\cdot,\cdot)$ is the standard Euclidean inner product on $\overline{V}_1 (\overline{\mathcal{P}})$ and $V_k$ a $k$-dimensional subspace of $\overline{V}_1(\overline{\mathcal{P}})$. This is because $\overline{T}$ is a self-adjoint operator on a finite-dimensional space.

\begin{remark}
Note that for any $v \in C^\infty_{0,1}(\mathcal{N})$, we have
\begin{align}
	(\overline{T}(g),v|_{\overline{\mathcal{P}}})={}& \int_{ \overline{\mathcal{P}}}
	\text{Proj}_1(\frac{\partial }{\partial \tau}u_g)v= \int_{ \overline{\mathcal{P}}} 
	v \frac{\partial u_g}{\partial \tau}=\int_{ \mathcal{N}} 
	\nabla u_g \cdot \nabla v+v\Delta u_g\nonumber \\
	={}&\int_{ \mathcal{N}} 
	\nabla u_g\cdot \nabla v - du_g v=\mathcal{L}(u_g,v),
	\label{eq:D_N_map_two_forms}
\end{align}
here, we still use $u_g$ to denote the \textit{$L$-extension} of $g$ in the sense of $u_g=(u_{g|_{\tilde{\mathcal{Q}}_i}})_{i=1, \cdots ,l}$ where $u_{g|_{\tilde{\mathcal{Q}}_i}}$ is the $L$-extension of $g|_{\tilde{\mathcal{Q}}_i}$ on $\mathcal{N}_i$ with respect to the set $\tilde{\mathcal{Q}}_i$, and we use \eqref{eq:IBPcomp} with $\mathcal{N}_i$ in place of $\mathcal{N}$, $\tilde{\mathcal{Q}}_i$ in place of $\mathcal{Q}$ for the third equality.
\end{remark}

\begin{remark}
	There is another way to describe $\sigma_k(\overline{T})$ as
\[
	\sigma_k(\overline{T})=\min _{V_k \subset \overline{V}_1(\overline{\mathcal{P}})}
	\max _{g \in V_k}
	\frac{\mathcal{L}(u_g,u_g)}{(g,g)}.
\]
\end{remark}

Another several important spaces are those related to the nullity of the subnetworks $\mathcal{N}_i$.
Let us define the following spaces.
\begin{definition}
	\label{def_Fj}
	We define
\begin{align*}
	F_j:={}&\bigoplus_{i=1}^{l}
	D_\tau\mathcal{J}_0^0(\tilde{\mathcal{Q}}_i
	)\cap V_j(\overline{\mathcal{P}})
	\quad \text{ for }j=1,2.\\
	\overline{V}_2(\overline{\mathcal{P}}):={}&
	\bigoplus_{i=1}^{l}(D_\tau\mathcal{J}_0^0(\tilde{\mathcal{Q}}_i))
	^\bot  \cap V_2(\overline{\mathcal{P}}).
\end{align*}
\end{definition}
Recall that $\overline{V}_1(\mathcal{\mathcal{P}})$ is already defined in Definition \ref{def_V1bar}.

Clearly, the direct sum of $F_1,F_2$ is just $\bigoplus_{i=1}^{l}D_\tau\mathcal{J}_0^0 (\tilde{\mathcal{Q}}_i)$.
%
Here, we list some relations between $F_j,\overline{V}_j(\overline{\mathcal{P}})$, $V_j(\overline{\mathcal{P}})$, and $D_\tau\mathcal{J}_0^0(\tilde{\mathcal{Q}}_i)$.
\begin{align}
	V_1(\overline{\mathcal{P}})={} & \overline{V}_1(\overline{\mathcal{P}})\oplus F_1, \label{eq:V1decomp}\\
	V_2(\overline{\mathcal{P}})={} & \overline{V}_2(\overline{\mathcal{P}})\oplus F_2,\\
	\bigoplus_{i=1}^{l}( D_\tau\mathcal{J}_0^0(\tilde{\mathcal{Q}}_i) ) ^\bot ={}& \overline{V}_1(\overline{\mathcal{P}})\oplus 
	\overline{V}_2(\overline{\mathcal{P}}),\\
	\bigoplus_{i=1}^{l}D_\tau\mathcal{J}_0^0(\tilde{\mathcal{Q}}_i)={}&  F_1\oplus F_2.
\end{align}

Here is a quick explanation of the meaning of $F_1$ and $F_2$. For any $g \in 
\bigoplus_{i=1}^{l}D_\tau\mathcal{J}_0^0(\tilde{\mathcal{Q}}_i)$, we know there is $u \in C_1^\infty(\mathcal{N})$ such that $u|_{\mathcal{N}_i} \in \mathcal{J}_0^0(\mathcal{N}_i)$ and $\frac{\partial u}{\partial \tau}=g$ on $\overline{\mathcal{P}}$. Note that actually $Lu=0$ on $\mathcal{N}$ all the time. But since $u$ does not satisfy the compatibility condition on $\overline{\mathcal{P}}$ in general, $u$ is not an eigenfunction on $\mathcal{N}$. We can divide the set $\bigoplus_{i=1}^{l}D_\tau\mathcal{J}_0^0(\tilde{\mathcal{Q}}_i)$ into $F_1$ and $F_2$. If $g \in F_2$, then the corresponding $u$ will satisfy the compatibility condition on $\overline{\mathcal{P}}$ since $\frac{\partial u}{\partial \tau}=g \in V_2(\overline{\mathcal{P}})$. 
This time, $u$ will become an eigenfunction for $L$ on $\mathcal{N}$ with eigenvalue 0. So $F_2$ will be related to the nullity of the whole network $\mathcal{N}$. Informally, this space is related to the functions such that they have eigenvalues 0 when restricted on each small network, and they happen to be the null functions on the big network. The orthogonal complement of $F_2$ is $F_1$. So we know the meaning of $F_1$ is $F_1$ will be related to the functions such that they are in the null space when restricted on each small network, but they will be orthogonal to the null space of the whole networks informally. 

Now let us state the main index theorem in a formal way.
\begin{theorem}
	[Index Theorem for networks]
	Suppose $\mathcal{N}$ is a network with partition $\{ \mathcal{N}_i \}_{i=1,\cdots ,l}$.
	Then the index of $\mathcal{N}$ can be computed as follows,
	\[
		\mathrm{Ind}(\mathcal{N})= \sum_{i =1}^{l}\mathrm{Ind}
		(\mathcal{N}_i)+ \mathrm{Ind}(\overline{T})+ \mathrm{dim}(F_1).
	\]
	\label{thm_index_theorem_for_network}
\end{theorem}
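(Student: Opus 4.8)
The plan is to decompose the admissible space $W^{1,2}_{0,1}(\mathcal{N})$ according to the partition and to estimate the index of $Q$ on each piece, matching the three contributions. First I would observe that a function $\phi \in W^{1,2}_{0,1}(\mathcal{N})$ restricts to a function $\phi_i := \phi|_{\mathcal{N}_i}$ on each subnetwork; since $\mathring{\mathcal{P}} = \bigsqcup \mathcal{P}_i$, the compatibility condition at interior points of $\mathcal{N}$ inside the pieces is exactly the condition defining $W^{1,2}_{1}(\mathcal{N}_i)$, while along $\overline{\mathcal{P}} = \bigcup \tilde{\mathcal{Q}}_i$ the function $\phi$ takes boundary values lying in $V_1(\overline{\mathcal{P}})$, and the remaining compatibility requirement on $\mathcal{N}$ (that $\frac{\partial}{\partial\tau}\phi \in V_2(\overline{\mathcal{P}})$ there) is the \emph{only} extra constraint linking the pieces. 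So the natural splitting is: let $\mathcal{H}_0 \subset W^{1,2}_{0,1}(\mathcal{N})$ be functions vanishing on $\overline{\mathcal{P}}$ (these are exactly $\bigoplus_i W^{1,2}_{0,1}(\mathcal{N}_i)$ with zero trace on $\tilde{\mathcal{Q}}_i$), and write any $\phi$ as $\phi_0 + \Phi(g)$ where $g = \phi|_{\overline{\mathcal{P}}}$ and $\Phi(g)$ is a fixed extension. I would choose $\Phi(g) = u_g$, the tuple of $L$-extensions, when $g \in \overline{V}_1(\overline{\mathcal{P}})$, because $Q(\phi_0, u_g) = \mathcal{L}(\phi_0, u_g) = -\int_{\overline{\mathcal{P}}} \phi_0 \frac{\partial u_g}{\partial \tau} = 0$ by integration by parts (using $L u_g = 0$ and $\phi_0 = 0$ on $\overline{\mathcal{P}}$), giving an orthogonal splitting of $Q$.

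Next I would handle the subtlety that $g$ need not lie in $\overline{V}_1(\overline{\mathcal{P}})$ alone: $V_1(\overline{\mathcal{P}}) = \overline{V}_1(\overline{\mathcal{P}}) \oplus F_1$. When $g$ has a nonzero $F_1$-component, the relevant extensions are functions $u$ with $u|_{\mathcal{N}_i} \in \mathcal{J}_0^0(\mathcal{N}_i)$ and $\frac{\partial u}{\partial\tau}|_{\overline{\mathcal{P}}} = g \in F_1$, which do \emph{not} satisfy the compatibility condition on $\overline{\mathcal{P}}$, hence are not admissible on $\mathcal{N}$ — but adding such a $u$ with small coefficient to any admissible $\phi$ still requires landing back in $W^{1,2}_{0,1}(\mathcal{N})$, so $F_1$ cannot be used directly. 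The correct bookkeeping is: within the space of admissible $\phi$ with trace in $\overline{V}_1(\overline{\mathcal{P}})$, the form $Q$ splits as $Q(\phi_0,\phi_0) \oplus (\overline{T}(g), g)$ by \eqref{eq:D_N_map_two_forms}, contributing $\sum_i \mathrm{Ind}(\mathcal{N}_i) + \mathrm{Ind}(\overline{T})$ to the index (and the nullities similarly, but that is the other theorem). The extra $\mathrm{dim}(F_1)$ term comes from a \emph{separate} negative subspace: for each $g \in F_1$ pick the extension $u$ as above; then $Q(u,u) = \mathcal{L}(u,u) = -\int_{\overline{\mathcal{P}}} u \frac{\partial u}{\partial\tau}$, and one shows this, suitably interpreted via nearby genuinely-admissible test functions, produces a strictly negative direction — roughly, because $u$ is a null function on each $\mathcal{N}_i$ but its failure of compatibility forces $Q$ strictly below zero on a perturbation. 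So I would construct an explicit $\mathrm{dim}(F_1)$-dimensional negative subspace of $W^{1,2}_{0,1}(\mathcal{N})$ orthogonal (w.r.t. $Q$) to the previous pieces, and then prove no larger negative subspace exists.

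For the matching upper bound I would argue by contradiction: given any subspace $S \subset W^{1,2}_{0,1}(\mathcal{N})$ of dimension exceeding the claimed index on which $Q < 0$, project its elements' boundary traces onto $V_1(\overline{\mathcal{P}}) = \overline{V}_1(\overline{\mathcal{P}}) \oplus F_1$ and decompose each $\phi \in S$ as $\phi_0 + u_g + w_h$ with $g \in \overline{V}_1$, $h \in F_1$, $\phi_0$ vanishing on $\overline{\mathcal{P}}$, using the $Q$-orthogonality relations to write $Q(\phi,\phi) = Q(\phi_0,\phi_0) + (\overline{T} g, g) + (\text{$F_1$-block})$ plus cross terms that I would need to show vanish or are controlled. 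A dimension count against $\sum_i \mathrm{Ind}(\mathcal{N}_i)$, $\mathrm{Ind}(\overline{T})$, and $\mathrm{dim} F_1$ then yields a nonzero $\phi$ with $Q(\phi,\phi) \ge 0$, a contradiction. The main obstacle I anticipate is precisely the $F_1$ term: making rigorous in what sense functions with trace in $F_1$ — which violate compatibility on $\overline{\mathcal{P}}$ and so are not in $W^{1,2}_{0,1}(\mathcal{N})$ — nonetheless contribute $\mathrm{dim}(F_1)$ honest negative directions inside the admissible space. This likely requires a careful gluing/approximation argument: perturbing such a $u$ by an element of $\mathcal{H}_0$ on each $\mathcal{N}_i$ to restore compatibility while keeping $Q$ negative, and checking the resulting directions remain linearly independent from the $\sum_i \mathrm{Ind}(\mathcal{N}_i) + \mathrm{Ind}(\overline{T})$ block. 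The orthogonal-projection identities among $V_1, V_2, \overline{V}_1, \overline{V}_2, F_1, F_2$ set up just before the theorem are exactly what should make this gluing canonical.
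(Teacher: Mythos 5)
Your overall architecture — restrict to the pieces, split admissible functions into a part vanishing on $\overline{\mathcal{P}}$ plus an $L$-extension, use the $Q$-orthogonality $Q(\phi_0,u_g)=0$ and the identity relating $Q(u_g,u_g)$ to $(\overline{T}g,g)$, then a projection/dimension-count for the upper bound — is the same as the paper's. But there is a genuine gap at the one place where the theorem is delicate: the $\mathrm{dim}(F_1)$ term. You assert that the piecewise-null functions $u$ with $u|_{\mathcal{N}_i}\in\mathcal{J}_0^0(\mathcal{N}_i)$ and $\frac{\partial u}{\partial\tau}|_{\overline{\mathcal{P}}}=g\in F_1$ ``are not admissible on $\mathcal{N}$.'' That is false: such $u$ vanish on $\overline{\mathcal{P}}$ (each $w_i\in\mathcal{J}_0^0(\mathcal{N}_i)$ has zero trace on $\tilde{\mathcal{Q}}_i$), so their trace lies trivially in $V_1(\overline{\mathcal{P}})$ and they belong to $C^\infty_{0,1}(\mathcal{N})$. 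What they fail is the \emph{Neumann}-type compatible condition $\frac{\partial u}{\partial\tau}\in V_2(\overline{\mathcal{P}})$, which is not a membership condition for the admissible space but the condition for being an eigenfunction. You conflate the two conditions already in your first paragraph, and this misreading derails the construction: your candidate formula $Q(u,u)=-\int_{\overline{\mathcal{P}}}u\,\frac{\partial u}{\partial\tau}$ evaluates to $0$ for these $u$, so no amount of ``suitable interpretation via nearby admissible test functions'' will make it a negative direction by itself, and the ``gluing/approximation argument to restore compatibility'' you anticipate is not needed and would not produce what you want.

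The mechanism you are missing exploits the \emph{dual role} of $g\in F_1=\bigoplus_i D_\tau\mathcal{J}_0^0(\tilde{\mathcal{Q}}_i)\cap V_1(\overline{\mathcal{P}})$: it is simultaneously the normal derivative of an admissible piecewise-null function $w$ (with $Q(w,w)=0$) and a legal \emph{trace}, so one can pick an admissible $\hat g\in C^\infty_{0,1}(\mathcal{N})$ with $\hat g|_{\overline{\mathcal{P}}}=g$. The cross term is then
\[
Q(\hat g,w)=\int_{\overline{\mathcal{P}}}\hat g\,\frac{\partial w}{\partial\tau}=\int_{\overline{\mathcal{P}}}|g|^2>0,
\]
so on $\mathrm{span}(\hat g,w)$ the form $Q$ restricted to the line $\hat g+cw$ equals $Q(\hat g,\hat g)+2c|g|^2$, which is negative for $c\ll 0$; this is the paper's Lemma \ref{lem:1_dim_extension}, and the same lemma (index exactly one on each such $W_g$, non-negativity when the trace vanishes) is also what closes the upper bound. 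Without identifying this hyperbolic pairing — which is why $F_1$ is defined as an intersection with $V_1(\overline{\mathcal{P}})$ rather than with $V_2(\overline{\mathcal{P}})$ — your proposal does not actually produce the $\mathrm{dim}(F_1)$ negative directions, nor can it rule out extra ones in the upper bound, so the proof as sketched does not go through.
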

\begin{remark}
	This theorem holds for arbitrary networks, not only triple junction ones.
\end{remark}

The proof of this theorem relies on the following lemma.

\begin{lemma}
	Let $g \in F_1$ and suppose 
	\[
	W_g:=\{ v \in C^\infty_{0,1}(\mathcal{N} ):v|_{\overline{\mathcal{P}}}=cg \text{ for some }c \in \mathbb{R}, v|_{\mathcal{N}_i}\bot_\mathcal{L} \mathcal{J}_0^- (\mathcal{N}_i) \text{ for each }i\}.
	\]
	Here we use $\bot_\mathcal{L}$ to denote the orthogonal with respect to the bilinear form $\mathcal{L}$.
Then the following holds.
\begin{itemize}
	\item If $g\equiv 0$ on $ \overline{\mathcal{P}}$, then $\mathcal{L}$ is non-negative definite on the space $W_g$.
	\item If $g\neq 0$ on $ \overline{\mathcal{P}}$, then $\mathcal{L}$ has index 1 on the space $W_g$.
\end{itemize}
\label{lem:1_dim_extension}
\end{lemma}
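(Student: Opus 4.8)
The plan is to realize $W_g$ as a codimension‑at‑most‑one enlargement of the $g$‑independent space
\[
W_0:=\{v\in C^\infty_{0,1}(\mathcal{N}):v|_{\overline{\mathcal{P}}}=0,\ v|_{\mathcal{N}_i}\bot_\mathcal{L}\mathcal{J}_0^-(\mathcal{N}_i)\ \text{for each }i\},
\]
on which $\mathcal{L}$ is manifestly non‑negative, and then, when $g\neq 0$, to locate the single negative direction by pairing against an auxiliary function assembled from the null spaces $\mathcal{J}_0^0(\mathcal{N}_i)$.

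First I would treat $W_0$, which is exactly $W_g$ when $g\equiv 0$. If $v\in W_0$ then $v$ vanishes on $\overline{\mathcal{P}}=\bigcup_i\tilde{\mathcal{Q}}_i$ and on $\mathcal{Q}=\bigcup_i\overline{\mathcal{Q}}_i$, hence $v|_{\mathcal{N}_i}\in W^{1,2}_{0,1}(\mathcal{N}_i)$; additivity of $\mathcal{L}$ over the partition gives $\mathcal{L}(v,v)=\sum_i\mathcal{L}(v|_{\mathcal{N}_i},v|_{\mathcal{N}_i})$. Since for an eigenfunction $\phi$ on $\mathcal{N}_i$ with eigenvalue $\lambda<0$ one has $\mathcal{L}(v|_{\mathcal{N}_i},\phi)=\lambda\int_{\mathcal{N}_i}v\phi$, the condition $v|_{\mathcal{N}_i}\bot_\mathcal{L}\mathcal{J}_0^-(\mathcal{N}_i)$ is equivalent to $v|_{\mathcal{N}_i}\bot_{L^2}\mathcal{J}_0^-(\mathcal{N}_i)$; expanding $v|_{\mathcal{N}_i}$ in the eigenbasis of $L$ on $\mathcal{N}_i$ then yields $\mathcal{L}(v|_{\mathcal{N}_i},v|_{\mathcal{N}_i})=\sum_{\lambda_k\ge 0}\lambda_k c_k^2\ge 0$. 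Summing over $i$ proves the first bullet.

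For $g\neq 0$ I would produce two special elements of $W_g$. Since $g\in F_1\subset\bigoplus_i D_\tau\mathcal{J}_0^0(\tilde{\mathcal{Q}}_i)$, choose $u_i\in\mathcal{J}_0^0(\mathcal{N}_i)$ with $\tfrac{\partial u_i}{\partial\tau}=g$ on $\tilde{\mathcal{Q}}_i$; because each $u_i$ vanishes on $\mathcal{Q}_i\supset\tilde{\mathcal{Q}}_i$, the tuple $\hat u_g:=(u_1,\dots,u_l)$ glues to an element of $C^\infty_{0,1}(\mathcal{N})$ with $\hat u_g|_{\overline{\mathcal{P}}}=0$, so $\hat u_g\in W_g$. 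Integrating by parts on each $\mathcal{N}_i$ — using $Lu_i=0$, the compatible condition of $u_i$ at $\mathcal{P}_i$ (so $\tfrac{\partial u_i}{\partial\tau}\in V_2(\mathcal{P}_i)$) paired against $v|_{\mathcal{P}_i}\in V_1(\mathcal{P}_i)$, and $v=0$ on $\mathcal{Q}$ — gives, for every $v\in C^\infty_{0,1}(\mathcal{N})$,
\[
\mathcal{L}(\hat u_g,v)=\int_{\overline{\mathcal{P}}}v\,\frac{\partial\hat u_g}{\partial\tau}=\bigl(v|_{\overline{\mathcal{P}}},\,g\bigr).
\]
Next pick $v_1\in W_g$ with $v_1|_{\overline{\mathcal{P}}}=g$: extend $g\in V_1(\overline{\mathcal{P}})$ to some $\tilde g\in C^\infty_{0,1}(\mathcal{N})$ vanishing on $\mathring{\mathcal{P}}$ and subtract, on each $\mathcal{N}_i$, the $\mathcal{L}$‑projection of $\tilde g|_{\mathcal{N}_i}$ onto $\mathcal{J}_0^-(\mathcal{N}_i)$, which alters neither the traces on $\overline{\mathcal{P}}\cup\mathcal{Q}$ nor membership in $C^\infty_{0,1}$. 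The assignment $v\mapsto$ (the scalar $c$ with $v|_{\overline{\mathcal{P}}}=cg$) is a linear functional on $W_g$ with kernel $W_0$ sending $v_1\mapsto 1$, so $W_g=W_0\oplus\mathbb{R}v_1$. By the displayed identity, $\mathcal{L}(\hat u_g,\hat u_g)=(0,g)=0$ while $\mathcal{L}(\hat u_g,v_1)=(g,g)=\|g\|^2\neq 0$, so the $\mathcal{L}$‑Gram matrix of the linearly independent pair $(\hat u_g,v_1)$ has determinant $-\|g\|^4<0$; hence $\mathcal{L}$ has index exactly $1$ on $\mathrm{span}\{\hat u_g,v_1\}\subset W_g$, giving index $\ge 1$ on $W_g$. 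Conversely, any subspace $S\subset W_g$ with $\dim S\ge 2$ meets $W_0$ nontrivially (the functional $c$ has rank $\le 1$ on $S$), and $\mathcal{L}\ge 0$ on $W_0$ by the first bullet, so $\mathcal{L}|_S$ is not negative‑definite; therefore the index of $\mathcal{L}$ on $W_g$ equals $1$.

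The step I expect to require the most care is the integration‑by‑parts identity $\mathcal{L}(\hat u_g,v)=(v|_{\overline{\mathcal{P}}},g)$: it forces one to keep track of the identification $V(\overline{\mathcal{P}})\cong\bigoplus_i V(\tilde{\mathcal{Q}}_i)$, to distinguish the junctions of $\mathcal{N}$ lying in $\mathring{\mathcal{P}}$ from those in $\overline{\mathcal{P}}$, and to use the $V_1$–$V_2$ duality that kills the interior‑junction boundary terms while $v=0$ disposes of the $\mathcal{Q}$‑terms. The remaining ingredients — the spectral expansion on each $\mathcal{N}_i$ used for the first bullet, and the linear‑algebra decomposition $W_g=W_0\oplus\mathbb{R}v_1$ together with the Gram‑matrix signature computation — are routine.
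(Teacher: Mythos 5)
Your proof is correct and follows essentially the same route as the paper: the non-negative case via additivity of $\mathcal{L}$ over the partition and $\mathcal{L}$-orthogonality to $\mathcal{J}_0^-(\mathcal{N}_i)$, and the index-one case by pairing an extension of $g$ against the auxiliary function $\hat u_g$ assembled from the null spaces $\mathcal{J}_0^0(\mathcal{N}_i)$ with Neumann data $g$, then reducing any would-be second negative direction to the first case. Your Gram-matrix signature computation and the explicit splitting $W_g=W_0\oplus\mathbb{R}v_1$ are only cosmetic variants of the paper's choice of $\hat g+cw$ with $c$ suitably large and its observation that two negative directions would differ by an element vanishing on $\overline{\mathcal{P}}$ (and your care in arranging $v_1|_{\mathcal{N}_i}\bot_\mathcal{L}\mathcal{J}_0^-(\mathcal{N}_i)$ is a welcome tightening of the paper's ``arbitrary extension'').
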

\begin{proof}
	For the first case, for any $v \in W_g$, we know $v|_{\mathcal{Q}_i} \equiv 0$. Hence $v|_{\mathcal{N}_i} \in C^\infty _{0,1}(\mathcal{N}_i)$.
	Since $v|_{\mathcal{N}_i}\bot \mathcal{J}_0^-(\mathcal{N}_i)$, we know
	\[
		\mathcal{L}(v|_{\mathcal{N}_i}, v|_{\mathcal{N}_i})\ge 0.
	\]

	Hence
	\[
		\mathcal{L}(v,v)=\sum_{i =1}^{l}
		\mathcal{L}(v|_{\mathcal{N}_i}, v|_{\mathcal{N}_i})\ge 0.
	\]

	For the second case, since $g|_{\tilde{\mathcal{Q}}_i} \in D_\tau\mathcal{J}_0^0 (\tilde{\mathcal{Q}}_i)$, we can find some $w_i \in \mathcal{J}_0^0 (\mathcal{N}_i)$ such that $\frac{\partial w_i}{\partial \tau}|_{\tilde{\mathcal{Q}}_i }=g|_{\tilde{\mathcal{Q}}_i}$. Then we set $w=\bigoplus_{i=1}^{l}w_i$. Clearly $w|_{\mathcal{N}_i} \bot _{\mathcal{L}}
	\mathcal{J}_0^-(\mathcal{N}_i)$ and $w \in C^{\infty}_{0,1}(\mathcal{N})$.
	Let $\hat{g} \in C^\infty_{0,1} (\mathcal{N})$ be the smooth extension of $g$ such that $\hat{g}|_{\mathcal{N}_i}\bot_{\mathcal{L}}\mathcal{J}_0^-(\mathcal{N}_i)$ for each $i$.
	Such $\hat{g}$ exists since $\mathcal{J}_0^-(\mathcal{N}_i)$ is a finite dimensional space for each $i$.
	Then we know $\hat{g}+cw \in W_g$ for any $c \in \mathbb{R} $.

Note that
\[
	\mathcal{L}(w,w)=\sum_{i =1}^{l}
	\mathcal{L}(w_i,w_i)=0.
\]

So
\begin{align*}
	\mathcal{L}(\hat{g}+cw,\hat{g}+cw)
	={}&\mathcal{L}(\hat{g},\hat{g})+2c\mathcal{L}(\hat{g},w)\\
	={}& \mathcal{L}(\hat{g},\hat{g})
	+2c\left( \int_{ \overline{\mathcal{P}}} 
	g \frac{\partial w}{\partial \tau}- \int_{ \mathcal{N}} \hat{g}
Lw\right) \\
	={}& \mathcal{L}(\hat{g},\hat{g})+2c\int_{ 
	\overline{\mathcal{P}}}g^2,
\end{align*}
where we have used \eqref{eq:defLBilinearComp} for second equality.
Since $\int_{ \overline{\mathcal{P}}} 
g^2>0$, we can always choose $c \in \mathbb{R} $ such that $\mathcal{L}(\hat{g}+cw,\hat{g}+cw)<0$. Hence $\mathcal{L}$ has at least index 1 on the space $W_g$.

On the other hand, suppose we have two elements $v_1,v_2 \in W_g$ such that $\mathcal{L}(\alpha v_1+\beta v_2, \alpha v_1+\beta v_2)<0$ for any $\alpha,\beta\neq 0$ and $v_1,v_2$ are linearly independent. By the definition of $W_g$, we know there is $c \in \mathbb{R} $ such that $v_1=cv_2$ when restricting on $\overline{\mathcal{P}}$.
Hence $\left( v_1-cv_2 \right) |_{\overline{\mathcal{P}}}=0$. 
By the result in the first case, we know $\mathcal{L}(v_1-cv_2,v_1-cv_2)\ge 0$, this is a contradiction with $\mathcal{L}$ having index at least two on $W_g$.
So we have shown $\mathcal{L}$ has the exact index one on space $W_g$.
\end{proof}

\begin{proof}[Proof of Theorem \ref{thm_index_theorem_for_network}]
	We divide the proof into two parts.

	\noindent\textbf{First part.}
	We prove the following result,
	\[\mathrm{Ind}(\mathcal{N})\ge \sum_{i =1}^{l}\mathrm{Ind}
		(\mathcal{N}_i)+ \mathrm{Ind}(\overline{T})+ \mathrm{dim}(F_1).\]
	
	Let us do it step by step.

	\noindent\textbf{Step 1.} $\mathcal{L}$ is negative definite on each $\mathcal{J}_0^-(\mathcal{N}_i)$ (taking zero extension outside of $\mathcal{N}_i$). This is obvious since $\mathcal{L}$ has the same values on the subnetwork and the whole network after zero extension.
	Note that $\mathcal{J}_0^-(\mathcal{N}_i) \subset C^{\infty}_{0,1}(\mathcal{N}_i)$ and $\mathcal{J}_0^-(\mathcal{N}_i) \bot _{\mathcal{L}}\mathcal{J}_0^-(\mathcal{N}_j)$ for any $i\neq j$. (Here the functions in $\mathcal{J}_0^-(\mathcal{N}_i)$ might not the satisfy compatibility condition on $\overline{\mathcal{P}}$ after zero extension on the whole network $\mathcal{N}$.)

	\noindent\textbf{Step 2.}
	If $(\overline{T}(g),g)<0$, from identity (\ref{eq:D_N_map_two_forms}), we know $\mathcal{L}(u_g,u_g)= (\overline{T}(g),g)<0$ if $u_g$ is the $L$-extension of $g$. 
	So we can define
	\[
		E^-_L(\overline{T}):= \{ u_g \in C^\infty_{0,1}
	(\mathcal{N}):g \in E^-(\overline{T}) \}
	\]
	to be the $L$-extension space of $E^-(\overline{T})$. Note that $L$-extension is a linear map, so $E^-_L(\overline{T})$ is a finite-dimensional space. It is easy to see $\mathrm{dim}(E^-_L(\overline{T} ))=\mathrm{dim}(E^-(\overline{T}))$ and $\mathcal{L}$ is negative definite on the space of $E^-_L(\overline{T})$ by using the identity (\ref{eq:D_N_map_two_forms}).\\

	\noindent\textbf{Step 3.} We will construct a new space from $F_1$ that $\mathcal{L}$ is negative definite on.

	Let $N=\mathrm{dim}(F_1)$ and we write $\{ g^1,\cdots ,g^N \}$ as the orthogonal basis of $F_1$ in the $L^2$ norm. Then we can choose $\hat{g}^j$ as the extension of $g^j$ such that $\hat{g}^j|_{\mathcal{N} _i} \bot \mathcal{J}_0^- (\mathcal{N}_i)$ for each $i$ and $\mathcal{L}(\hat{g}^i,\hat{g}^j)=0$ for $i\neq j$. This is possible since each $\mathcal{J}_0^-(\mathcal{N}_i)$ is finite-dimensional and we can choose $\hat{g}^i$ inductively.

	Then by the proof of Lemma \ref{lem:1_dim_extension}, we can find $w^{j}, c^j$ with $g^j=\frac{\partial w^j }{\partial \tau}$ on $\overline{\mathcal{P}}$ and $w^j|_{\mathcal{N}_i}\in \mathcal{J}_0^0(\mathcal{N}_i)$ for each $i$ such that if we set $u^j=\hat{g}^{j}+c^j w^j$, then $\mathcal{L}(u^j,u^j)<0$.

	Moreover, we have
\begin{align*}
		\mathcal{L}(u^j,u^k)={} & 
		\mathcal{L}(\hat{g}^j,\hat{g}^k)+ c^j\mathcal{L}(\hat{g}^k,w^j)+ c^k\mathcal{L}(\hat{g}^j,w^k)+ c^jc^k\mathcal{L}(w^j,w^k)\\
		={} & 0+c^j\int_{ \overline{\mathcal{P}}
		}\hat{g}^k \frac{\partial w^j
	}{\partial \tau}+ c^k \int_{ \overline{\mathcal{P}}} 
	\hat{g}^j \frac{\partial w^k
	}{\partial \tau}+0\\
	={}& (c^j+c^k)
	\int_{ \overline{\mathcal{P}}} 
	\hat{g}^j\hat{g}^k=0
	\end{align*}
	for any $j\neq k$.
	
	So $\mathcal{L}$ is negative definite on the space $\text{span} \left( u^1,\cdots ,u^N \right) $.

	We use $\hat{F}_1$ to denote the space $\text{span}(u^1,\cdots ,u^N)$.
	\\

	\noindent\textbf{Step 4.} The spaces $\mathcal{J}_0^-(\mathcal{N}_i)$, $E^-_L(\overline{T})$, and $\hat{F}_1$ are orthogonal to each other with respect the form $\mathcal{L}$. Here we will view $\mathcal{J}_0^-(\mathcal{N}_i)$ as a subspace of $C^{\infty}_{0,1}(\mathcal{N})$ by zero extension outside of $\mathcal{N}_i$.

	Firstly, we know $\mathcal{J}_0^-(\mathcal{N}_i)\bot_{\mathcal{L}}\mathcal{J}_0^-(\mathcal{N}_j)$ for any $i\neq j$ by the properties of zero extension.

	Secondly, it is clear that $\hat{F}_1\bot_\mathcal{L} \mathcal{J}_0^- (\mathcal{N}_i)$ by the choice of $\hat{F}_1$. 

	Thirdly, for any $v \in \mathcal{J}_0^- (\mathcal{N}_i)$ and $u_g \in E^-_L(\overline{T})$, we have
\begin{align*}
		\mathcal{L}(v,u_g)={} & \int_{ \overline{\mathcal{P}}} 
		v \frac{\partial u_g}{\partial \tau}- \int_{ \mathcal{N}} v Lu_g=0.
	\end{align*}

	So $\mathcal{J}_0^-(\mathcal{N}_i) \bot_\mathcal{L} E^-_L(\overline{T})$.

	At last, for any $u_g \in E^-_L(\overline{T})$, we have
	\[
		\mathcal{L}(u_g,u^j)= \int_{ \overline{\mathcal{P}}} 
		u^j \frac{\partial u_g
		}{\partial \tau}- \int_{ \mathcal{N}} u^jLu_g= 0\quad \text{ for }j=1,\cdots ,N,
	\]
	since $\frac{\partial u_g}{\partial \tau} |_{\overline{\mathcal{P}}} \in D_\tau\mathcal{J}_0^0 (\overline{\mathcal{P}})^\bot $ and $u^j|_{\overline{\mathcal{P}}} \in D_\tau\mathcal{J}_0^0 (\overline{\mathcal{P}})$.
	Hence $\hat{F}_1\bot _{\mathcal{L}}E^-_L(\overline{T})$.

	With these results, we know that $\mathcal{L}$ is actually negative definite on the space $W:=\bigoplus_{i=1}^{l} \mathcal{J}_0^-(\mathcal{N}_i)\oplus E^-_L(\overline{T})\oplus \hat{F}_1$.

	So $\mathcal{L}$ has index at least $\sum_{i =1}^{l}\mathrm{Ind} (\mathcal{N}_i)+ \mathrm{Ind}(\overline{T})+ \mathrm{dim}(F_1)$ on $\mathcal{N}$.
	\\

	\noindent\textbf{Second part.}
	We prove the following result,
	\[\mathrm{Ind}(\mathcal{N})\le \sum_{i =1}^{l}\mathrm{Ind}
		(\mathcal{N}_i)+ \mathrm{Ind}(\overline{T})+ \mathrm{dim}(F_1).\]

	Note that $\mathcal{L}$ is negative definite on $W$, then we know $-\mathcal{L}(\cdot,\cdot)$ is an inner product on $W$.
	Hence, the projection $\text{Proj}_{\mathcal{L}}:C_{0,1}^{\infty}(\mathcal{N})\rightarrow W$ with respect to the bilinear form $\mathcal{L}$ is well-defined.

	Let $\overline{W}$ be the maximal subspace of $C^\infty_{0,1}(\mathcal{N})$ such that $\mathcal{L}$ is negative definite on it.

	We will show that the projection $\text{Proj}_\mathcal{L}: \overline{W}\rightarrow W$ is a bijection.
	We note that $\text{Proj}_{\mathcal{L}}$ is onto by the definition of $W$ and $\overline{W}$. 
	Now, let us show $\text{Proj}_\mathcal{L}$ is one-to-one.
	If not, choose a non-zero $u \in \text{Ker} (\text{Proj}_\mathcal{L})$. This means $u\bot_{\mathcal{L}}W$. Let us consider the restriction $g:=u|_{\overline{\mathcal{P}}}$ on $\overline{\mathcal{P}}$. Since $g \in V_1(\overline{\mathcal{P}})$, we know $g$ can be decomposed to $g_1+g_2 \in \overline{V}_1(\overline{\mathcal{P}})\oplus F_1$. Let $u_1$ to be the $L$-extension of $g_1$, that is $u_1=u_{g_1}$. Now we write $u_2=u-u_1$.

	At first, let us check $u_1,u_2\bot_\mathcal{L} W$.
	We only need to check $u_1\bot _\mathcal{L}W$ since $u \bot_\mathcal{L}W$.
	\begin{itemize}
		\item $u_1\bot_\mathcal{L} \mathcal{J}_0^-(\mathcal{N}_i)$ for each $i$ is clear by the definition of $L$-extension.
		\item $u_1\bot_\mathcal{L} E^-_L(\overline{T})$ is done by the following arguments. 
			
			Using $u\bot_{\mathcal{L}}W$, for any $h \in E_\sigma$, let $u_h$ be the $L$-extension of $h$, then we have
			\[
				0=\mathcal{L}(u,u_h)=(u|_{\overline{\mathcal{P}}},T(h))=\sigma \int_{ \overline{\mathcal{P}}} uh= \sigma(g,h).
			\]
			
			So $g\bot E^-(\overline{T})$ by the arbitrary choice of $\sigma<0$. 
			Note that $g_2\bot E^-(\overline{T})$ automatically since $g_2 \in D_\tau\mathcal{J}_0^0(\overline{\mathcal{P}})$. So $g_1 \bot  E^-(\overline{T})$.
			By the properties of $L$-extension, we know $u_1\bot_\mathcal{L} \hat{E}_L^- (\overline{T})$ (cf., from the identity (\ref{eq:D_N_map_two_forms})).
		\item $u_1\bot_\mathcal{L} \hat{F}_1$ since $u_1|_{\overline{\mathcal{P}}} \in (D_\tau\mathcal{J}_0^0(\overline{\mathcal{P}}))^\bot $ and $L$-extension properties.
	\end{itemize}

	So we will find $u_2\bot _{\mathcal{L}}W$.

	Again, we can see $\mathcal{L}(u_1,u_2)=0$ by the $L$-extension properties of $u_1$.

	Now by the variational characterization of the negative eigenspace of $\overline{T}$, we know $\mathcal{L}(u_1,u_1)\ge 0$ since $u_1 \bot_\mathcal{L} \hat{E}^-(\overline{T})$.

	If $u_2| _{\overline{\mathcal{P}}}\equiv 0$, then by the first case of Lemma \ref{lem:1_dim_extension}, we know $\mathcal{L}(u_2,u_2)\ge 0$. On the other hand, if $u_2|_{\overline{\mathcal{P}}}\neq 0$ on $\overline{\mathcal{P}}$, since $g_2 \in F_1$, we can find an element $w \in \hat{F}_1$ such that $w=u_2$ on $\overline{\mathcal{P}}$.
	So, we can get
\begin{align*}
	0\le {} & \mathcal{L}(u_2-w,u_2-w)\quad (\text{the first case of Lemma \ref{lem:1_dim_extension}})\\
	={} &\mathcal{L}(u_2,u_2)+\mathcal{L}(w,w)\quad (u_2\bot _{\mathcal{L}}W)\\
	<{}&\mathcal{L}(u_2,u_2)\quad (\mathcal{L}(w,w)<0).
	\end{align*}

	Now use $u_1\bot_\mathcal{L}u_2$, we have $\mathcal{L}(u,u)=\mathcal{L}(u_1,u_1)+\mathcal{L}(u_2,u_2)\ge 0$, which contradicts the fact that $\mathcal{L}$ is negative definite on $\overline{W}$.

	So we finished the proof.
\end{proof}

Similarly, we have the following theorem.
\begin{theorem}
	[Nullity Theorem for networks]
	Suppose $\mathcal{N}$ is a network with partition $\{ \mathcal{N}_i \}_{i=1,\cdots ,l}$.

	The nullity of $\mathcal{N}$ can be computed by 
	\[
		\mathrm{Nul}(\mathcal{N})= \mathrm{Nul}(\overline{T})+ \mathrm{dim}(F_2)+ \sum_{i =1}^{l}\mathrm{dim}(\mathcal{I}_0
		(\tilde{\mathcal{Q}}_i)).
	\]
	\label{thm_nullity_theorem_for_network}
\end{theorem}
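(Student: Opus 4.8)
The plan is to mirror the two-sided argument used for the Index Theorem, replacing "negative definite" by "vanishing" throughout. I would first build an explicit subspace of $\mathcal{J}_0^0(\mathcal{N})$ (the kernel of $\mathcal{L}$ on $C^\infty_{0,1}(\mathcal{N})$) of dimension $\mathrm{Nul}(\overline{T})+\mathrm{dim}(F_2)+\sum_i \mathrm{dim}(\mathcal{I}_0(\tilde{\mathcal{Q}}_i))$, and then show there is nothing more. For the lower bound, I would produce three mutually $\mathcal{L}$-orthogonal families of null functions. First, for each $i$, the space $\mathcal{I}_0(\tilde{\mathcal{Q}}_i)=\{v\in\mathcal{J}_0^0(\mathcal{N}_i):\partial v/\partial\tau=0\text{ on }\tilde{\mathcal{Q}}_i\}$ extends by zero to honest elements of $\mathcal{J}_0^0(\mathcal{N})$: they satisfy $Lv=0$, vanish on $\mathcal{Q}$, lie in $V_1$ and (because the normal derivative is zero on $\tilde{\mathcal{Q}}_i\subset\overline{\mathcal{P}}$, hence trivially in $V_2(\overline{\mathcal{P}})$) satisfy the compatible condition on $\overline{\mathcal{P}}$. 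These are $\mathcal{L}$-orthogonal across different $i$ by disjoint support. Second, for $g\in F_2$, the function $u$ with $u|_{\mathcal{N}_i}\in\mathcal{J}_0^0(\mathcal{N}_i)$ and $\partial u/\partial\tau=g$ on $\overline{\mathcal{P}}$ — which exists by definition of $F_2\subset\bigoplus_i D_\tau\mathcal{J}_0^0(\tilde{\mathcal{Q}}_i)$ — satisfies the compatible condition on $\overline{\mathcal{P}}$ precisely because $g\in V_2(\overline{\mathcal{P}})$, as already observed in the discussion preceding the theorem; thus $u\in\mathcal{J}_0^0(\mathcal{N})$. Third, for $g\in E^0(\overline{T})$, let $u_g$ be the $L$-extension of $g$; from identity (\ref{eq:D_N_map_two_forms}), $\mathcal{L}(u_g,v)=(\overline{T}(g),v|_{\overline{\mathcal{P}}})$ for all $v\in C^\infty_{0,1}(\mathcal{N})$. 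Since $\overline{T}(g)=0$ and $u_g|_{\overline{\mathcal{P}}}=g\in\overline{V}_1(\overline{\mathcal{P}})\subset V_1(\overline{\mathcal{P}})$, the function $u_g$ satisfies all conditions to lie in $\mathcal{J}_0^0(\mathcal{N})$ — the compatible condition on $\overline{\mathcal{P}}$ follows because $\mathcal{L}(u_g,v)=0$ for all admissible $v$, which forces $\partial u_g/\partial\tau\in V_2(\overline{\mathcal{P}})$ by the integration-by-parts identity for $Q$. I would then check the three families are $\mathcal{L}$-orthogonal to each other (the arguments are identical in spirit to Step 4 of the Index Theorem proof: $\mathcal{I}_0(\tilde{\mathcal{Q}}_i)$ against $u_g$ uses $\partial u_g/\partial\tau\in(D_\tau\mathcal{J}_0^0(\overline{\mathcal{P}}))^\bot$ while the former sits inside $D_\tau\mathcal{J}_0^0$; the $F_2$-family against $u_g$ uses the same pairing; etc.), and that each family is internally of full dimension (injectivity of the relevant $L$-extension maps, which is built into the definition $u_g\bot\mathcal{I}_0(\tilde{\mathcal{Q}}_i)$).

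For the upper bound, let $u\in\mathcal{J}_0^0(\mathcal{N})$ be arbitrary and decompose it exactly as in Step 2 of the second part of the Index Theorem proof. Write $g=u|_{\overline{\mathcal{P}}}\in V_1(\overline{\mathcal{P}})=\overline{V}_1(\overline{\mathcal{P}})\oplus F_1$; but now I claim the $F_1$-component must vanish. Indeed, pairing $u$ against an arbitrary $L$-extension $u_h$ with $h\in\overline{V}_1(\overline{\mathcal{P}})$ gives $0=\mathcal{L}(u,u_h)=(\overline{T}(h),g)=(h,\overline{T}(g))$, so $\overline{T}(g)\bot\overline{V}_1(\overline{\mathcal{P}})$, i.e. $\overline{T}(g)=0$, so $g\in E^0(\overline{T})\subset\overline{V}_1(\overline{\mathcal{P}})$ and hence the $F_1$-component of $g$ is zero. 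Writing $u_1=u_g$ (the $L$-extension, now in the $E^0$-family) and $u_2=u-u_1$, we get $u_2|_{\overline{\mathcal{P}}}=0$, so $u_2|_{\mathcal{N}_i}\in C^\infty_{0,1}(\mathcal{N}_i)$ with $Lu_2=0$; thus $u_2|_{\mathcal{N}_i}\in\mathcal{J}_0^0(\mathcal{N}_i)$, and — because $u_2$ satisfies the compatible condition on $\overline{\mathcal{P}}$ together with $u$ and $u_1$ — we have $\partial u_2/\partial\tau=\partial u/\partial\tau-\partial u_1/\partial\tau$ on $\overline{\mathcal{P}}$, which decomposes $u_2$ as (a piece whose normal derivative lies in $V_2(\overline{\mathcal{P}})$, contributing to the $F_2$-family) plus (a piece with zero normal derivative on each $\tilde{\mathcal{Q}}_i$, landing in $\bigoplus_i\mathcal{I}_0(\tilde{\mathcal{Q}}_i)$). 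Making this last splitting precise — peeling off from $u_2$ a representative of the $F_2$-family so that the remainder has vanishing normal derivative on $\overline{\mathcal{P}}$ — is the step I expect to require the most care, since one must verify that the remainder genuinely lands in $\bigoplus_i\mathcal{I}_0(\tilde{\mathcal{Q}}_i)$ and that the decomposition $u=u_1+(F_2\text{-part})+(\mathcal{I}_0\text{-part})$ is direct, with no overcounting between the $E^0$-family, the $F_2$-family, and the $\mathcal{I}_0$-pieces.

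The main obstacle, then, is bookkeeping rather than a new idea: ensuring the three contributing subspaces are genuinely independent and that every null function decomposes into exactly one piece from each, i.e. that the map $u\mapsto(g\in E^0(\overline{T}),\ \text{class in }F_2,\ \oplus_i\mathcal{I}_0\text{-component})$ is a linear isomorphism from $\mathcal{J}_0^0(\mathcal{N})$ onto $E^0(\overline{T})\oplus F_2\oplus\bigoplus_i\mathcal{I}_0(\tilde{\mathcal{Q}}_i)$. Once the $\mathcal{L}$-orthogonality relations from the Index Theorem's Step 4 are transcribed and the quotient $F_2$-structure is handled cleanly (working with $L$-extensions that are orthogonal to $\mathcal{I}_0(\tilde{\mathcal{Q}}_i)$ removes the ambiguity), the dimension count closes and both inequalities give the stated equality.
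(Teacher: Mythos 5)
Your overall architecture matches the paper's: the same three families ($L$-extensions of $E^0(\overline{T})$, representatives of $F_2$, and the spaces $\mathcal{I}_0(\tilde{\mathcal{Q}}_i)$) for the lower bound, and a peeling-off decomposition of an arbitrary null function for the upper bound. The lower bound is fine. There is, however, a genuine gap in the upper bound at the step where you claim the $F_1$-component of $g=u|_{\overline{\mathcal{P}}}$ vanishes. Your pairing argument $0=\mathcal{L}(u,u_h)$ for $h\in\overline{V}_1(\overline{\mathcal{P}})$ only sees the $\overline{V}_1$-component of $g$: since $\overline{T}(h)\in\overline{V}_1(\overline{\mathcal{P}})$ while $F_1\subset\bigoplus_i D_\tau\mathcal{J}_0^0(\tilde{\mathcal{Q}}_i)$ is orthogonal to $\overline{V}_1(\overline{\mathcal{P}})$, the pairing is blind to the component $g_2\in F_1$, and all you can conclude is $\overline{T}(g_1)=0$ for the component $g_1\in\overline{V}_1(\overline{\mathcal{P}})$. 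Moreover $\overline{T}(g)$ is not even defined unless $g$ already lies in $\overline{V}_1(\overline{\mathcal{P}})$, so the chain ``$\overline{T}(g)=0$, hence $g\in E^0(\overline{T})\subset\overline{V}_1(\overline{\mathcal{P}})$, hence the $F_1$-component is zero'' assumes what it is trying to prove.

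The missing ingredient is the Fredholm solvability condition of Proposition \ref{prop:D_N_map}: since $u|_{\mathcal{N}_i}$ solves the Dirichlet problem (\ref{eq:Dirichlet_problem}) on $\mathcal{N}_i$ with $f\equiv 0$ and boundary data $u|_{\tilde{\mathcal{Q}}_i}$, that data must lie in $(D_\tau\mathcal{J}_0^0(\tilde{\mathcal{Q}}_i))^\bot$; combined with $g\in V_1(\overline{\mathcal{P}})$ this gives $g\in\overline{V}_1(\overline{\mathcal{P}})$, i.e.\ $g_2=0$, which is exactly how the paper's proof proceeds. (Equivalently, test $\mathcal{L}(u,\cdot)$ against functions $w$ assembled from $\mathcal{J}_0^0(\mathcal{N}_i)$-pieces with $w|_{\overline{\mathcal{P}}}=0$: then $0=\mathcal{L}(u,w)=\int_{\overline{\mathcal{P}}}g\,\frac{\partial w}{\partial\tau}$, and $\frac{\partial w}{\partial\tau}$ sweeps out $\bigoplus_i D_\tau\mathcal{J}_0^0(\tilde{\mathcal{Q}}_i)\supset F_1$.) Once this is inserted, the rest of your peeling --- subtracting the $L$-extension $u_g$, noting that $\frac{\partial}{\partial\tau}(u-u_g)|_{\overline{\mathcal{P}}}$ lies in $(F_1\oplus F_2)\cap V_2(\overline{\mathcal{P}})=F_2$, subtracting the $\hat{F}_2$-representative, and landing in $\bigoplus_i\mathcal{I}_0(\tilde{\mathcal{Q}}_i)$ --- goes through essentially as you sketched and as the paper does.
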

\begin{proof}
	The proof for this theorem is similar to the above proof, and it is a bit easier.\\

	\noindent\textbf{First part.} We show
	\[
		\mathrm{Nul}(\mathcal{N})\ge \mathrm{Nul}(\overline{T})+\mathrm{dim}
		(F_2)+\sum_{i =1}^{l}\mathrm{dim}(\mathcal{I}_0
		(\tilde{\mathcal{Q}}_i)).
	\]
	
	Firstly, we define 
	\[\hat{E}^0(\overline{T}):= \{ u_g \in C^\infty_{0,1}(\mathcal{N}): g \in E^0(\overline{T}), u_g \text{ is the }L \text{-extension of }g\}.\]
	Using formula (\ref{eq:D_N_map_two_forms}) and $(\overline{T}(g),v|_{\overline{\mathcal{P}}})=0$ for any $g \in E^0(\overline{T})$, $v \in C^\infty_{0,1}(\mathcal{N})$, we know $\mathcal{L}$ indeed vanishes on the space $\hat{E}^0(\overline{T})$.

	Secondly, for any element $g \in F_2$, we can choose an element $v_g \in C^\infty_{0,1} (\mathcal{N})$ such that $v_g|_{\overline{\mathcal{P}}} \equiv 0$, $g=\frac{\partial v_g}{\partial \tau} |_{\overline{\mathcal{P}}}$ and $v_g|_{\mathcal{N}_i}\in \mathcal{J}_0^0 (\mathcal{N}_i)$. $v_g$ is unique up to an addition of element in the set $\bigoplus_{i=1}^{l}\mathcal{I}_0(\tilde{\mathcal{Q}}_i)$ by the Fredholm alternative.
	So we always choose a unique $v_g$ such that $v_g\bot \bigoplus_{i=1}^{l}\mathcal{I}_0(\tilde{\mathcal{Q}}_i)$.
	Recall that $v_g\bot \bigoplus_{i=1}^{l}\mathcal{I}_0(\tilde{\mathcal{Q}}_i)$ means $v_g$ is orthogonal to $\bigoplus_{i=1}^{l}\mathcal{I}_0(\tilde{\mathcal{Q}}_i)$ with respect to the standard inner product in $L^2(\mathcal{N})$.
	Again, since $g \in V_2(\overline{\mathcal{P}})$, we know $v_g$ will satisfy 
	\[
		\mathcal{L}(v_g,v)=\int_{ \overline{\mathcal{P}}} 
		v \frac{\partial v_g}{\partial \tau}=\int_{ \overline{\mathcal{P}}} vg=0,
	\]
	for any $v \in C^\infty_{0,1}(\mathcal{N})$. So if we write $\hat{F}_2:=\{ v_g: g \in F_2\}$, we know $\mathcal{L}$ vanishes on the space $\hat{F}_2$.

	At last, for any element $u \in \mathcal{I}_0 (\tilde{\mathcal{Q}} _i)$, we have
	\[
		\mathcal{L}(u,v)=\int_{ \overline{\mathcal{P}}} 
		v \frac{\partial u}{\partial \tau}- \int_{ \mathcal{N}} vLu=0,
	\]
	for any $v \in C^\infty_{0,1}(\mathcal{N})$.
	Hence $\mathcal{L}$ vanishes on the space $\mathcal{I}_0(\tilde{\mathcal{\mathcal{L}}}_i)$.

	We can easily see that $\hat{E}^0, \hat{F}_2, \mathcal{I}_0(\tilde{\mathcal{Q}}_i )$ are linearly independent. 
	So we know $\mathrm{Nul}(\mathcal{N})\ge \mathrm{Nul}(\overline{T}) +\mathrm{dim}(F_2)+\sum_{i =1}^{l} \mathrm{dim}(\mathcal{I}_0(\tilde{\mathcal{Q}}_i))$.

	We write $W=\hat{E}^0\oplus \hat{F}_2\oplus \bigoplus_{i=1}^{l}\mathcal{I}_0(\tilde{\mathcal{Q}}_i)$.\\

	\noindent\textbf{Second part.} We show
	\[
		\mathrm{Nul}(\mathcal{N})\le \mathrm{Nul}(\overline{T})+\mathrm{dim}
		(F_2)+\sum_{i =1}^{l}\mathrm{dim}(\mathcal{I}_0
		(\tilde{\mathcal{Q}}_i)).
	\]

	Let $\overline{W}$ be the null space of $\mathcal{L}$. In the first part, we have seen $W \subset \overline{W}$.
	Now let us choose $w \in \overline{W}\backslash W$. Then $w \in C^{\infty}_{0,1}(\mathcal{N})$ will solve the following problem,
	\begin{equation}
		\begin{cases}
		Lw=0, & \text{ on }\mathcal{N}, \\
		w=0, & \text{ on }\mathcal{Q},\\
		\frac{\partial w}{\partial \tau} \in V_2(\mathcal{P}),\quad 
			 &\text{ on }\mathcal{P}.
		\end{cases}
		\label{eq:pfNullExtension}
	\end{equation}
	
	Restricting on $\mathcal{N}_i$, $w$ is still a solution to a Dirichlet problem with boundary condition $w|_{\tilde{\mathcal{Q}}_i}$. By the existence of solutions of problem (\ref{eq:Dirichlet_problem}), we know $w|_{\tilde{\mathcal{Q}}_i} \in (D_\tau\mathcal{J}_0^0(\tilde{\mathcal{Q}}_i))^\bot $. Combining $w|_{\overline{\mathcal{P}}}\in V_1(\overline{\mathcal{P}})$, we have $w|_{\overline{\mathcal{P}}} \in \overline{V}_1(\overline{\mathcal{P}})$. So $\overline{T}(w|_{\overline{\mathcal{P}}})$ is well-defined.

	Up to an addition of an element in $\hat{E}^0$, we can suppose $w|_{\overline{\mathcal{P}}}\bot E^0(\overline{T})$.
	Again up to an addition of an element in $\hat{F}_2$, we can suppose
	\[
		\frac{\partial w}{\partial \tau}|_{\overline{\mathcal{P}}}
		\in \overline{V}_2(\overline{\mathcal{P}}),
	\]
	since $\left.\frac{\partial w}{\partial \tau} \right|_{\overline{\mathcal{P}}}\in V_2(\overline{\mathcal{P}})$ and $V_2(\overline{\mathcal{P}})=\overline{V}_2(\overline{\mathcal{P}})\oplus F_2$. This step does not change the values of $\left.w\right|_{\overline{\mathcal{P}}}.$
	Moreover, up to an addition of elements in each $\mathcal{I}_0(\tilde{\mathcal{Q}}_i)$, we suppose $w\bot \mathcal{I}_0(\tilde{\mathcal{Q}}_i)$.
	This step does not change the values of $\left.w\right|_{\overline{\mathcal{P}}}$, too.
	In summary, after these operations on $w$, we can check $w$ satisfies the following conditions, 
	\begin{itemize}
		\item $w|_{\tilde{\mathcal{Q}}_i} \in (D_\tau\mathcal{J}_0^0(\tilde{\mathcal{Q}}_i))^\bot $. (Since $w |_{\overline{\mathcal{P}}}\in \overline{V}_1(\overline{\mathcal{P}})$.)
		\item $\left.\frac{\partial w}{\partial \tau}\right|_{\tilde{\mathcal{Q}}_i}\in (D_\tau\mathcal{J}_0^0(\tilde{\mathcal{Q}}_i))^\bot $. (Since $\left.\frac{\partial w}{\partial \tau}\right|_{\overline{\mathcal{P}}} \in \overline{V}_2(\overline{\mathcal{P}})$.)
				\item $w|_{\mathcal{N}_i}\bot \mathcal{I}_0(\tilde{\mathcal{Q}}_i)$. 
	\end{itemize}
	So by the definition of $L$-extension, $w$ is an $L$-extension of $w|_{\overline{\mathcal{P}}}$.

	On the other hand, $\text{Proj}_1(\frac{\partial w}{\partial \tau})=0$ since $\left.\frac{\partial w}{\partial \tau}\right|_{\overline{\mathcal{P}}}\in V_2(\overline{\mathcal{P}})$ by \eqref{eq:pfNullExtension}.
	By the definition of $\overline{T}$, we know $\overline{T}(w|_{\overline{\mathcal{P}}})=0$.
	Hence $w|_{\overline{\mathcal{P}}}\in E^0(\overline{T})$. The only possible way is $w|_{\overline{\mathcal{P}}}\equiv 0$ on $\overline{\mathcal{P}}$. But right now the unique $L$-extension for $0$ is 0 itself, so $w\equiv 0$ on $\mathcal{N}$. This means for any $w \in \overline{W}\backslash W$, $w$ will be a linear combination of elements in $W$, so $\overline{W}=W$. So the null space of $\mathcal{L}$ is indeed $W$ itself.

	In conclusion, we have $\mathrm{Nul}(\mathcal{N})= \mathrm{Nul}(\overline{T})+\mathrm{dim} (F_2)+\sum_{i =1}^{l}\mathrm{dim}(\mathcal{I}_0 (\tilde{\mathcal{Q}}_i))$.
	
	Moreover, we have
	\[
		\mathcal{J}_0^0(\mathcal{N})
		\cong W\cong \hat{E}^0\oplus \hat{F}_2\oplus \bigoplus_{i=1}^{l}\mathcal{I}_0(\tilde{\mathcal{Q}}_i).
	\]
\end{proof}

Note that we have $\mathrm{dim}(D_\tau\mathcal{J}_0^0 (\tilde{\mathcal{Q}}_i))+ \mathrm{dim}(\mathcal{I}_0 (\tilde{\mathcal{Q}}_i))= \mathrm{dim}(\mathcal{J}_0^0 (\mathcal{N}_i))=\mathrm{Nul}(\mathcal{N}_i)$, as a corollary, we have the following result.
\begin{corollary}
	Suppose $\mathcal{N}$ is a network with the partition $\{ \mathcal{N}_i \}_{i=1,\cdots ,l}$.
	Then we have
	\[
		\mathrm{Ind}(\mathcal{N})+ \mathrm{Nul}(\mathcal{N})= \sum_{i =1}^{l}\left( \mathrm{Ind}
		(\mathcal{N}_i)+\mathrm{Nul}
	(\mathcal{N}_i)\right) + \mathrm{Ind}(\overline{T})+ \mathrm{Nul}(\overline{T}).
	\]
	\label{cor:index_estimate}
\end{corollary}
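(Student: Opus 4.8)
The plan is to simply add the conclusions of Theorem \ref{thm_index_theorem_for_network} and Theorem \ref{thm_nullity_theorem_for_network} and then reconcile the leftover terms using elementary linear algebra on the finite-dimensional spaces attached to $\overline{\mathcal{P}}$. Concretely, summing the two identities gives
\[
	\mathrm{Ind}(\mathcal{N})+\mathrm{Nul}(\mathcal{N})
	= \sum_{i=1}^{l}\mathrm{Ind}(\mathcal{N}_i)
	+ \mathrm{Ind}(\overline{T}) + \mathrm{Nul}(\overline{T})
	+ \mathrm{dim}(F_1) + \mathrm{dim}(F_2)
	+ \sum_{i=1}^{l}\mathrm{dim}(\mathcal{I}_0(\tilde{\mathcal{Q}}_i)),
\]
so the whole statement reduces to the bookkeeping identity
\[
	\mathrm{dim}(F_1)+\mathrm{dim}(F_2)+\sum_{i=1}^{l}\mathrm{dim}(\mathcal{I}_0(\tilde{\mathcal{Q}}_i))
	= \sum_{i=1}^{l}\mathrm{Nul}(\mathcal{N}_i).
\]

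To get this, I would first invoke the decomposition already recorded in the text, namely $F_1\oplus F_2 = \bigoplus_{i=1}^{l} D_\tau\mathcal{J}_0^0(\tilde{\mathcal{Q}}_i)$, which immediately yields $\mathrm{dim}(F_1)+\mathrm{dim}(F_2)=\sum_{i=1}^{l}\mathrm{dim}\big(D_\tau\mathcal{J}_0^0(\tilde{\mathcal{Q}}_i)\big)$. Then I would use the rank--nullity identity for the restriction-of-normal-derivative map
\[
	\mathcal{J}_0^0(\mathcal{N}_i)\longrightarrow V(\tilde{\mathcal{Q}}_i),\qquad u\longmapsto \tfrac{\partial u}{\partial \tau}\Big|_{\tilde{\mathcal{Q}}_i},
\]
whose image is $D_\tau\mathcal{J}_0^0(\tilde{\mathcal{Q}}_i)$ by definition and whose kernel is exactly $\mathcal{I}_0(\tilde{\mathcal{Q}}_i)$ by definition; this gives $\mathrm{dim}\big(D_\tau\mathcal{J}_0^0(\tilde{\mathcal{Q}}_i)\big)+\mathrm{dim}\big(\mathcal{I}_0(\tilde{\mathcal{Q}}_i)\big)=\mathrm{dim}\big(\mathcal{J}_0^0(\mathcal{N}_i)\big)=\mathrm{Nul}(\mathcal{N}_i)$, which is precisely the identity displayed just before the corollary statement. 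Summing over $i$ and combining with the previous display finishes the reduction.

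There is essentially no analytic obstacle here: everything needed is already proved in Theorems \ref{thm_index_theorem_for_network} and \ref{thm_nullity_theorem_for_network}, and the only remaining content is the dimension count, which is a one-line application of rank--nullity once one unwinds the definitions of $D_\tau\mathcal{J}_0^0(\tilde{\mathcal{Q}}_i)$ and $\mathcal{I}_0(\tilde{\mathcal{Q}}_i)$. The mild point to be careful about is that the two theorems must be applied with the \emph{same} partition $\{\mathcal{N}_i\}$ and the \emph{same} resulting spaces $\overline{V}_1(\overline{\mathcal{P}})$, $F_1$, $F_2$, $\mathcal{I}_0(\tilde{\mathcal{Q}}_i)$, and $\overline{T}$, so that the $\mathrm{Ind}(\overline{T})$ and $\mathrm{Nul}(\overline{T})$ appearing in both theorems genuinely refer to the same operator; this is immediate from the setup of Section \ref{sec:index_theorem_for_networks}. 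Hence the corollary follows.
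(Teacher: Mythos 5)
Your proposal is correct and is essentially the paper's own argument: the paper likewise just adds Theorems \ref{thm_index_theorem_for_network} and \ref{thm_nullity_theorem_for_network} and absorbs the leftover terms via the decomposition $F_1\oplus F_2=\bigoplus_{i=1}^{l}D_\tau\mathcal{J}_0^0(\tilde{\mathcal{Q}}_i)$ together with the rank--nullity identity $\mathrm{dim}(D_\tau\mathcal{J}_0^0(\tilde{\mathcal{Q}}_i))+\mathrm{dim}(\mathcal{I}_0(\tilde{\mathcal{Q}}_i))=\mathrm{Nul}(\mathcal{N}_i)$, which is exactly the identity you derive from the restriction-of-normal-derivative map.
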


\section{Index and nullity of the stationary triple junction networks on spheres}%
\label{sec:index_and_nullity_of_the_equiangular_networks_on_sphere}

We say a network $\mathcal{N}$ is a triple junction network if all points in $\mathcal{P}$ are triple junction points and $\mathcal{Q}\equiv\emptyset $. This means $\mathcal{N}$ is closed.

The classification of all equiangular nets on the sphere was given by Heppes \cite{Heppes64}. There are precisely 10
such $\frac{2\pi}{3}$ nets. Based on our definition, we know all the stationary triple junction networks are contained in that list.

The first one net is trivial. It is the great circle of $\mathbb{S}^2$.
It is not a triple junction network based on our definition.

Apart from the trivial one, the left nine nets can also be viewed as stationary triple junction networks here.

Following the notation of polyhedra, we define the following notation.
\begin{itemize}
	\item $V=\sharp(\mathcal{P})$, the number of triple junction points (the number of vertices).
\item $E=\sharp(\mathcal{N})$, the number of geodesics in the network (the number of edges).
\item $F$, the number of regions after dividing $\mathbb{S}^2$ by network $\mathcal{N}$ (the number of faces).
\end{itemize}

By Euler's formula, we have $V-E+F=2$.

Our main theorem can be stated as follows.
\begin{theorem}
	[Index and nullity of embedded stationary triple junction networks]
	Let $\mathcal{N}$ be an embedded stationary triple junction network on sphere $\mathbb{S}^2$.
	Let $V,E,F$ be defined as above. Then the (Morse) index of $\mathcal{N}$ is $F-1$, and the nullity of $\mathcal{N}$ is 3 (with respect to the stability operators).

	More precisely, for the operator $L(u)=\Delta u+u$, $L$ has only one negative eigenvalue $\lambda=-1$ (without multiplicity) and its eigenspace consisting of the locally constant functions on $\mathcal{N}$.

	Besides, all of the eigenfunctions with eigenvalue 0 are generated by the rotation on spheres.
	\label{thm_index_and_nullity_of_triple_junction_networks}
\end{theorem}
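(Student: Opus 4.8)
The plan is to prove the four facts $\mathrm{Ind}(\mathcal{N})\ge F-1$, $\mathrm{Nul}(\mathcal{N})\ge 3$, $\mathrm{dim}\,\mathcal{J}_{-1}(\mathcal{N})=F-1$, and $\mathrm{Ind}(\mathcal{N})+\mathrm{Nul}(\mathcal{N})\le F+2$; combined with the fact that every eigenvalue is $\ge -1$, these force $\mathrm{Ind}(\mathcal{N})=F-1$, $\mathrm{Nul}(\mathcal{N})=3$, the absence of any eigenvalue in $(-1,0)$, and the stated description of the eigenspaces.

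\textbf{The two lower bounds.} First I would note that the Rayleigh quotient of $Q$ on $W^{1,2}_{1}(\mathcal{N})$ equals $\int_{\mathcal{N}}|\nabla\phi|^2\big/\int_{\mathcal{N}}\phi^2-1\ge -1$, so all eigenvalues are $\ge -1$ and $\lambda=-1$ exactly when $\nabla\phi\equiv 0$; hence $\mathcal{J}_{-1}(\mathcal{N})$ is precisely the space of locally constant functions lying in $W^{1,2}_{1}(\mathcal{N})$, i.e. with $\phi|_P\in V_1(P)$ at every vertex. I would compute its dimension through a ``face potential'' isomorphism: since each $V_1(P)$ has codimension one in $\mathbb{R}^3$ and the three edges at a triple point $P$ separate the faces cyclically around $P$, assigning a number $p_k$ to each of the $F$ faces and declaring $\phi$ on an edge to be (up to a fixed sign) the difference of the potentials of the two bordering faces produces a compatible locally constant function, while conversely compatibility at the vertices is exactly the cocycle condition allowing one to integrate $\phi$ to face potentials (the vertex stars generate the cycle space of the dual graph on $\mathbb{S}^2$). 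The kernel of $p\mapsto\phi$ is the constants, so $\mathrm{dim}\,\mathcal{J}_{-1}(\mathcal{N})=F-1$, and since $Q(\phi,\phi)=-\int_{\mathcal{N}}\phi^2<0$ on these functions, $\mathrm{Ind}(\mathcal{N})\ge F-1$. For the nullity I would use that a rotational Killing field $X$ of $\mathbb{S}^2$ restricts on each great-circle arc to a solution of the scalar Jacobi equation $\phi''+\phi=0$, that $X\cdot\nu$ satisfies the compatibility conditions at every junction (this is the mechanism behind the $\lambda=0$ eigenfunction exhibited for the tetrahedron in Section \ref{sub:examples_of_eigenfunctions}), and that $X\mapsto X\cdot\nu$ is injective since $\mathcal{N}$ is not contained in a single great circle; this yields a $3$-dimensional subspace of $\mathcal{J}_0(\mathcal{N})$, so $\mathrm{Nul}(\mathcal{N})\ge 3$.

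\textbf{The upper bound.} Here I would invoke Heppes' classification, reducing to the nine nontrivial equiangular nets, and apply Corollary \ref{cor:index_estimate} to the partition of $\mathcal{N}$ into its individual arcs $\gamma^i$ — after first subdividing each semicircular edge at its midpoint, which changes nothing by the congruence propositions and leaves $F$ fixed, while making every arc have length $<\pi$. For an arc of length $<\pi$ the Dirichlet operator $\Delta+1$ has no eigenvalue $\le 0$, so $\mathrm{Ind}(\gamma^i)=\mathrm{Nul}(\gamma^i)=0$ and $F_1=F_2=\mathcal{I}_0=0$; Corollary \ref{cor:index_estimate} then gives $\mathrm{Ind}(\mathcal{N})+\mathrm{Nul}(\mathcal{N})=\mathrm{Ind}(\overline{T})+\mathrm{Nul}(\overline{T})$ for the Dirichlet-to-Neumann map $\overline{T}$ on $\overline{V}_1(\overline{\mathcal{P}})=V_1(\overline{\mathcal{P}})$. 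Using $(\overline{T}g,g)=\mathcal{L}(u_g,u_g)$ together with the explicit arc matrices $\tfrac{1}{\sin\ell_i}\left(\begin{smallmatrix}\cos\ell_i & -1\\ -1 & \cos\ell_i\end{smallmatrix}\right)$, one gets the closed form $(\overline{T}g,g)=\sum_i\left(-\tfrac12\tan\tfrac{\ell_i}{2}\,(g_i^0+g_i^1)^2+\tfrac12\cot\tfrac{\ell_i}{2}\,(g_i^0-g_i^1)^2\right)$; this is negative definite on the $(F-1)$-dimensional space of locally constant boundary data and vanishes on the $3$-dimensional space of restrictions of rotation fields, so once more $\mathrm{Ind}(\overline{T})\ge F-1$ and $\mathrm{Nul}(\overline{T})\ge 3$. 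The real content of the upper bound is that there is nothing else — that $\overline{T}$ is positive definite on a complement of the correct dimension — which I would settle by computing the signature of the (small, explicit) symmetric matrix $\overline{T}$ for each of the nine nets.

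\textbf{Conclusion and the main obstacle.} Combining the inequalities above with $\mathrm{Ind}(\mathcal{N})+\mathrm{Nul}(\mathcal{N})=\mathrm{Ind}(\overline{T})+\mathrm{Nul}(\overline{T})\le F+2$ forces $\mathrm{Ind}(\mathcal{N})=F-1$ and $\mathrm{Nul}(\mathcal{N})=3$; since $\mathcal{J}_{-1}(\mathcal{N})$ already has dimension $F-1$ and all eigenvalues are $\ge -1$, there can be no eigenvalue in $(-1,0)$, so $-1$ is the only negative eigenvalue and its eigenspace is the locally constant functions, while $\mathcal{J}_0(\mathcal{N})$, containing the $3$-dimensional rotation space, must equal it. The main obstacle is precisely the last step of the upper bound: showing, uniformly or case by case, that the finite-dimensional operator $\overline{T}$ has no non-positive eigenvalue beyond the $F-1$ ``pressure'' directions and the three rotational zero modes; everything else — the Rayleigh estimate, the face-potential count, the rotational Jacobi fields, and the reduction through Corollary \ref{cor:index_estimate} — is either soft or a direct computation.
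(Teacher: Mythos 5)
Your overall architecture (two lower bounds, an upper bound of $F+2$ on $\mathrm{Ind}+\mathrm{Nul}$ via Corollary \ref{cor:index_estimate}, and the Rayleigh-quotient observation that all eigenvalues are $\ge -1$ with equality exactly on locally constant functions) is the same as the paper's, and your lower-bound arguments are essentially the paper's in different clothing: your ``face potential'' map is the paper's assignment of a $\pm 1$ locally constant function to each face, and your injectivity-modulo-constants claim is what the paper proves by showing the relation space has dimension at most one; the rotational Killing fields give the nullity bound in both. (Your additional surjectivity claim for the face-potential map is not needed, since $\dim \mathcal{J}_{-1}\ge F-1$ plus the upper bound already forces $\mathcal{J}_0^-=\mathcal{J}_{-1}$.) Where you genuinely diverge is the upper bound: you partition $\mathcal{N}$ into its individual arcs (subdividing semicircles so each arc has trivial Dirichlet index, nullity, and $\mathcal{I}_0$), which collapses Corollary \ref{cor:index_estimate} to the single statement $\mathrm{Ind}(\mathcal{N})+\mathrm{Nul}(\mathcal{N})=\mathrm{Ind}(\overline{T})+\mathrm{Nul}(\overline{T})$ for one explicit symmetric matrix on $V_1(\overline{\mathcal{P}})$, whose entries you write down correctly. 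The paper instead partitions each of Heppes' nine nets into a few mutually isometric subnetworks, computes index/nullity of the pieces and a much smaller interior Dirichlet-to-Neumann map, and uses symmetry and comparison tricks (e.g.\ comparing the $3$-prism pieces to arcs of a circle) to keep every computation small. Your route is more uniform and conceptually cleaner, but the price is that the decisive step --- showing $\overline{T}$ has exactly $F-1$ negative and $3$ zero eigenvalues --- becomes a signature computation on a space of dimension $2V=4F-8$, i.e.\ up to $40\times 40$ for the dodecahedron and around $30\times 30$ for the $6$-$3$ and $8$-$2$ types, and you leave this entirely to an unspecified case-by-case calculation. That is the one real gap: as written, the hardest and only quantitative part of the theorem is asserted rather than carried out, and without either performing those computations or importing the paper's symmetry reductions, the upper bound is not yet established.
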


Here, a locally constant function means a function $u \in C^\infty_{1}(\mathcal{N} )$ that is constant on each geodesic $\gamma^i$.

The proof of Theorem \ref{thm_index_theorem_for_network} is followed by the following two propositions.

\begin{proposition}
	For an embedded stationary triple junction network $\mathcal{N}$ in $\mathbb{S}^2$, we have 
	\[
		\mathrm{Ind}(\mathcal{N})\ge F-1,\quad \mathrm{Nul}(\mathcal{N})\ge 3.
	\]
	\label{prop:lower_bound}
\end{proposition}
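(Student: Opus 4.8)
The plan is to prove both inequalities by exhibiting explicit subspaces of the right dimension: for the index, an $(F-1)$-dimensional space of locally constant functions, and for the nullity, the $3$-dimensional space of normal components of rotation Killing fields.

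\textbf{Index bound.} Since $\mathcal{Q}=\emptyset$, membership in $W^{1,2}_{0,1}(\mathcal{N})$ only asks that $\phi|_{\mathcal{P}}\in V_1(\mathcal{P})$, and on a locally constant $\phi$ (value $c^i$ on $\gamma^i$) one has $Q(\phi,\phi)=-\int_{\mathcal{N}}\phi^2<0$ whenever $\phi\neq0$; so it suffices to produce an $(F-1)$-dimensional family of admissible locally constant functions. Let $R_1,\dots,R_F$ be the components of $\mathbb{S}^2\setminus\mathcal{N}$. For $\lambda=(\lambda_1,\dots,\lambda_F)\in\mathbb{R}^F$ I define the locally constant function $\delta\lambda$ whose value on an edge $\gamma^i$, which borders two faces $R_a,R_b$, is $\lambda_a-\lambda_b$, with the labelling fixed so that $\nu^i$ points from $R_b$ into $R_a$ (well defined since $\nu^i$ is continuous and nonvanishing along $\gamma^i$). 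I claim $\delta\lambda\in W^{1,2}_{0,1}(\mathcal{N})$: at a triple junction $P$ the three incident edges cut a neighbourhood into three sectors, and the condition $(\delta\lambda)|_P=(X\cdot\nu^j_P)_{j}$ for some $X\in T_P\mathbb{S}^2$ is a $3\times2$ linear system whose only compatibility requirement is that the three right-hand sides sum to zero (because the normals $\nu^j_P$ satisfy a single linear relation coming from $\sum_j\tau^j_P=0$); and the three right-hand sides are exactly the consecutive differences $\lambda_A-\lambda_B,\ \lambda_B-\lambda_C,\ \lambda_C-\lambda_A$, which telescope to $0$. The kernel of $\lambda\mapsto\delta\lambda$ consists of those $\lambda$ constant across every pair of adjacent faces, hence (the face-adjacency graph being connected, any two faces being joined by a path in $\mathbb{S}^2$ meeting $\mathcal{N}$ transversally) consists only of constants; so the image has dimension $F-1$, and $Q$ is negative definite on it. Thus $\mathrm{Ind}(\mathcal{N})\ge F-1$.

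\textbf{Nullity bound.} For $a\in\mathbb{R}^3$ let $X_a(p)=a\times p$ generate the rotation about $a$, and set $\phi_a:=X_a\cdot\nu\in C^\infty_1(\mathcal{N})$. Since each $\gamma^i$ lies on a great circle, whose $\mathbb{S}^2$-normal is the \emph{constant} vector $\nu^i$, one computes $\phi_a^i(p)=(\nu^i\times a)\cdot p$, the restriction to $\gamma^i$ of a linear function on $\mathbb{R}^3$; such a restriction to a great circle solves $\Delta\phi_a^i+\phi_a^i=0$, i.e. $L\phi_a=0$. For the compatible condition at $P\in\mathcal{P}$ I must check $\sum_j\bigl(\tfrac{\partial}{\partial\tau^j_P}\phi^j_{a,P}\bigr)\nu^j_P=0$ in $T_P\mathbb{S}^2$. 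Writing $\nu^j_P=\epsilon_j\,(P\times\tau^j_P)$ with $\epsilon_j=\pm1$ (this is an orthonormal frame of $\mathbb{R}^3$), the identity $(u\times v)\times w=(u\cdot w)v-(v\cdot w)u$ gives $\tfrac{\partial}{\partial\tau^j_P}\phi^j_{a,P}=(\nu^j_P\times a)\cdot\tau^j_P=\epsilon_j\,(P\cdot a)$, so the sum equals $(P\cdot a)\sum_j\epsilon_j\nu^j_P=(P\cdot a)\,P\times\bigl(\sum_j\tau^j_P\bigr)=0$ by stationarity. Hence each $\phi_a$ lies in $\mathcal{J}_0^0(\mathcal{N})$. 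The map $a\mapsto\phi_a$ is injective: if $\phi_a\equiv0$ then $(\nu^i\times a)\cdot p=0$ on each arc $\gamma^i$, and since a non-degenerate great-circle arc spans its $2$-plane $(\nu^i)^{\perp}$ this forces $a\parallel\nu^i$ for every $i$, so $\mathcal{N}$ would lie on one great circle — impossible for a triple junction network, since $\sum_{j=1}^3\tau^j_P=0$ cannot hold with three collinear unit vectors. Therefore $\mathrm{Nul}(\mathcal{N})\ge\dim\mathcal{J}_0^0(\mathcal{N})\ge3$.

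\textbf{Where the care is needed.} There is no deep obstruction once the two subspaces are identified; the only real work is the junction bookkeeping. For the index bound one must fix sign conventions for the normals $\nu^i$ consistently along edges and around triple junctions so that $\lambda\mapsto\delta\lambda$ genuinely lands in $V_1(\mathcal{P})$, which is exactly the point where the balancing $\sum_j\tau^j_P=0$ enters (forcing the relevant normals to sum to zero, hence the telescoping compatibility to hold). For the nullity bound the analogous core computation is the frame identity $\nu^j_P=\epsilon_j(P\times\tau^j_P)$, which converts the compatible condition for $\phi_a$ into $P\times\sum_j\tau^j_P=0$. The two ancillary facts invoked — connectedness of the face-adjacency graph, and the impossibility of a triple junction network lying on a single great circle — are elementary.
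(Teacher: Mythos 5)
Your proof is correct and follows essentially the same route as the paper: the index bound comes from the $(F-1)$-dimensional space of locally constant functions attached to the faces (your coboundary map $\lambda\mapsto\delta\lambda$ has exactly the span of the paper's per-face functions $u_j$ as its image, with the kernel-equals-constants argument replacing the paper's count of linear relations), and the nullity bound comes from the normal components of rotational Killing fields. The only cosmetic differences are that you verify the junction conditions via the frame identity $\nu^j_P=\epsilon_j(P\times\tau^j_P)$ and prove injectivity of $a\mapsto\phi_a$ on all of $\mathbb{R}^3$, whereas the paper exhibits three particular rotations and checks their independence directly.
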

\begin{proof}
	Still, we use $L=\Delta+1$ to denote the operator we are interested in. So we only need to show $\mathrm{dim}(\mathcal{J}_{-1}(L) )\ge F-1$ . Hence we will have $\mathrm{Ind}(\mathcal{N})\ge \mathrm{dim}(\mathcal{J}_{-1})=F-1$.

	Note that the functions in $\mathcal{J}_{-1}(L)$ are the locally constant functions. So indeed, we only need to find some locally constant function $u=(c^1,\cdots ,c^E) \in C^\infty_{1} (\mathcal{N})$ that span a linear subspace of dimension at least $F-1$.

	Indeed, we have the following result.
	\begin{lemma}
		\label{lem_locallyConst}
		For each face $F_j$ of $\mathcal{N}$ (a region in $\mathbb{S}^2$ with boundary consisting of the curves in $\mathcal{N}$), there exists a locally constant function $u_{F_j}$ on $\mathcal{N}$ such that $u_{F_j}$ is not identically zero on $F_j$.
	\end{lemma}
	\begin{proof}
	For any $\gamma^i$ which is a part of boundary $F_j$, we choose $c^i=1$ if $\nu^i$ pointing outside of $F_j$ and $c^i=-1$  if $\nu^i$ pointing inside of $F_j$.
For any $\gamma^i$ which is not a part of boundary $F_j$, we just choose $c^i=0$. 

\begin{figure}[ht]
	\centering 
	\begingroup
	\fontsize{7pt}{12pt}
	\def\svgwidth{0.6\columnwidth}
	\import{./figures/}{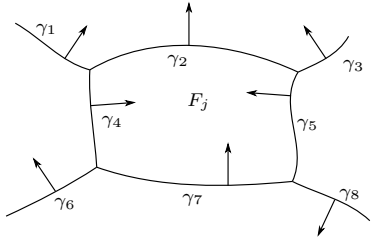}
	\endgroup

    \caption{A part of a triple junction network}
    \label{fig:choiceofu}
\end{figure}

	Here is an example of how to choose $u$ on the network.
	In Figure \ref{fig:choiceofu}, we choose the face $F_j$ (quadrilateral) as shown in this figure, then the choice of $u$ should be in the form
	\[
		c^2=1, c^4=c^5=c^7=-1, c^i=0
		\text{ otherwise. }
	\]
	
	Based on this choice of $u$, we can check $u|_{\mathcal{P}} \in V_1(\mathcal{P})$ (we only need to check it on the vertexes of $F_j$ indeed) and hence $u \in C^\infty_{1}(\mathcal{N})$.
	\end{proof}
	So at least, we find $F$ different elements $\{ u_1,\cdots ,u_{F} \}$ in the space $\mathcal{J}_{-1}(L)$, where $u_j$ comes from the previous construction for the face $F_j$.
	We do not know if they are linearly independent (actually they are not since $\sum_{i =1}^{F}u_i\equiv 0$).
	Now we define
	\[V:=\{ (a_1,\cdots ,a_F)
	\in \mathbb{R} ^F: a_1u_1+\cdots a_Fu_F=0\}.
\] 

	Here, we will view $u_j=(c_j^1,\cdots ,c_j^E)$ as an element in $\mathbb{R} ^F$ for each $j$.

If $\mathrm{dim}(V)\le1$, then we know $\mathrm{dim}\mathcal{J}_{-1}(L)\ge F-1$.
On the other hand, if $\mathrm{dim}(V)\ge 2$, we know there is at least one element $\alpha=(\alpha_1,\cdots ,\alpha_F)\neq 0$ in $V$ such that at least one component of $\alpha$, is $0$, i.e. $\alpha_j=0$ for some $j$. Let $M$ be the largest number such that there is at least one non-zero element in $V$ with exact $M$ components taking 0. 

Suppose this element written as $\alpha=(0,\cdots , 0, \alpha_{M+1},\cdots , \alpha_F)$ after relabeling of faces.
Now let us choose a curve $\gamma^i$ in the boundary of $F_1 \cup \cdots \cup F_M$ and $F_{M+1}\cup \cdots \cup F_F$. This is possible since $F_1 \cup \cdots \cup F_M$ is not the whole sphere since $M<F$.
Note that there are exactly two $j_1,j_2$ such that $c^i_{j_1},c^i_{j_2} \neq 0$ for such fixed $i$ (This is because each edge is incident with exactly 2 faces). By the choice of $\gamma^i$, we can suppose $j_1\le M, j_2>M$.
Now from $\alpha_{M+1}u_{M+1}+\cdots +\alpha_Fu_M=0$, we have $\alpha_{j_2}c^{i}_{j_2}=0$, which implies $\alpha_{j_2}=0$. This contradicts the choice of $\alpha$.
Hence, we should have $\mathrm{dim}(V)\le 1$.

So we have shown $\mathrm{Ind}(\mathcal{N})\ge F-1$.

Now let us show $\mathrm{Nul}(\mathcal{N})\ge 3$.
Since the rotation on a sphere is an isometry, if we let $X$ be the vector field from this rotation, then the function defined by $u=X\cdot \nu$ should satisfy $Lu=0$ and $u \in C^\infty_{1}(\mathcal{N})$.
Hence $u$ is a function with eigenvalue 0.

We will construct at least three different linearly independent functions in $\mathcal{J}_0(L)$.
Let us choose a triple junction point $P \in \mathcal{P}$ first. We still use $\gamma^1_P,\gamma^2_P, \gamma^3_P$ to denote the curves adjacent to $P$.

First, we can consider the function $u_0$ generated by the rotation that fixes $P$. This $u$ will satisfy $u|_{P}=0$.
Second, we can consider the function $u_i$ generated by the rotation that fixes the great circle that $\gamma^i_P$ is contained. This $u_i$ will take the value zero along $\gamma^i$ and $u_i|_{P}$ is not identically zero. Based on these results, we know $u_0,u_1,u_2$ are linearly independent. 

So $\mathrm{Nul}(\mathcal{N})\ge 3$.
\end{proof}

\begin{remark}
	From the proof of Proposition \ref{prop:lower_bound}, we can find the geometric meaning of the index and nullity of a network.
	Usually, the index means we have a method to decrease the total length of the network in the sense of the second derivative.
	Then, Theorem \ref{thm_index_and_nullity_of_triple_junction_networks} tells us, the way to decrease the total length is generated by the deformation which shrinks or expands the faces of the network.
	For the nullity, we know the null space is generated by the rotations on $\mathbb{S}^2$.
\end{remark}

Now we come to the main part proof of the theorem.
Note that we will only give the key idea of how to use the main theorem to compute the index and nullity and put some detailed calculations in the appendix.


\begin{proposition}
	For any embedded stationary triple junction network in $\mathbb{S}^2$, we have
	\[
		\mathrm{Ind}(\mathcal{N})+ \mathrm{Nul}(\mathcal{N})
		\le F+2.
	\]
	\label{prop:upper_bound}
\end{proposition}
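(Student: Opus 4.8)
The plan is to use the Index and Nullity Theorems (Theorem \ref{thm_index_theorem_for_network} and Theorem \ref{thm_nullity_theorem_for_network}), together with Corollary \ref{cor:index_estimate}, applied to a carefully chosen partition of $\mathcal{N}$. Combining Corollary \ref{cor:index_estimate} with the identity $\mathrm{Ind}(\mathcal{N}_i)+\mathrm{Nul}(\mathcal{N}_i) = \mathrm{dim}(\mathcal{J}_0^-(\mathcal{N}_i)) + \mathrm{dim}(\mathcal{J}_0^0(\mathcal{N}_i))$, the quantity we must bound is
\[
	\mathrm{Ind}(\mathcal{N})+\mathrm{Nul}(\mathcal{N}) = \sum_{i=1}^{l}\bigl(\mathrm{Ind}(\mathcal{N}_i)+\mathrm{Nul}(\mathcal{N}_i)\bigr) + \mathrm{Ind}(\overline{T}) + \mathrm{Nul}(\overline{T}).
\]
So the strategy is: pick a partition into small pieces (each with at most five arcs, where the index, nullity and D-N map are assumed known by the discussion preceding the proposition), estimate each $\mathrm{Ind}(\mathcal{N}_i)+\mathrm{Nul}(\mathcal{N}_i)$, estimate the number of non-positive eigenvalues of $\overline{T}$ on the small finite-dimensional space $\overline{V}_1(\overline{\mathcal{P}})$, and add everything up using Euler's formula $V-E+F=2$.

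First I would go through the classification of Heppes: there are exactly nine nontrivial embedded stationary triple junction nets, so in principle one could argue case by case, but the cleaner route is to find a uniform partition scheme. The natural choice is to cut $\mathcal{N}$ at every junction point, i.e. take $\overline{\mathcal{P}}=\mathcal{P}$, so that each $\mathcal{N}_i$ is a single geodesic arc (a "$\theta$-net" edge) with its two endpoints as boundary; then $l=E$, each small network is an arc $\gamma^i$ of some length $L_i<\pi$ (being an arc of a great circle bounded away from a full circle by embeddedness and the triple-junction condition), and the operator $L=\Delta+1$ on an arc of length less than $\pi$ has $\mathrm{Ind}=0$ and $\mathrm{Nul}=0$ for the Dirichlet problem — this is the elementary fact that the first Dirichlet eigenvalue of $-\partial_s^2$ on $(0,L)$ is $(\pi/L)^2>1$. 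Hence $\sum_i(\mathrm{Ind}(\mathcal{N}_i)+\mathrm{Nul}(\mathcal{N}_i))=0$, and also $\mathcal{J}_0^0(\mathcal{N}_i)=0$ so $D_\tau\mathcal{J}_0^0(\tilde{\mathcal{Q}}_i)=0$, which forces $\overline{V}_1(\overline{\mathcal{P}})=V_1(\mathcal{P})$ and $F_1=F_2=0$. The whole problem then reduces to estimating the number of non-positive eigenvalues of $\overline{T}$ on $V_1(\mathcal{P})$, where $\overline{T}$ is the projection to $V_1(\mathcal{P})$ of the direct sum of the single-arc Dirichlet-to-Neumann maps.

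The heart of the argument is therefore the spectral estimate $\mathrm{Ind}(\overline{T})+\mathrm{Nul}(\overline{T})\le F+2$. I would compute the single-arc D-N map explicitly: on an arc of length $L<\pi$, solving $u''+u=0$ (in arc-length) with boundary data $(a,b)$ and reading off the outward derivatives gives a $2\times 2$ matrix of the form $\frac{1}{\sin L}\begin{pmatrix}\cos L & -1\\ -1 & \cos L\end{pmatrix}$ (up to sign conventions on $\tau$), whose associated quadratic form on boundary values $(a,b)$ is $\frac{1}{\sin L}\bigl((a^2+b^2)\cos L - 2ab\bigr)$. Assembling these over all edges and restricting the quadratic form to $V_1(\mathcal{P})$, I would relate the form $(\overline{T}g,g)$ on $V_1(\mathcal{P})$ to the reduced quadratic form of the whole-network problem restricted to locally linear functions, i.e. to functions on $\mathcal{N}$ that solve $u''+u=0$ on each edge. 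Then $\dim V_1(\mathcal{P})\le 2V$ in general, but the compatibility constraint at triple junctions (the condition $u|_{\mathcal{P}}\in V_1(P)$, $\dim V_1(P)\le 2$) combined with $3V=2E$ (each vertex has degree $3$, each edge two endpoints) and $V-E+F=2$ should pin the dimension count down; one expects $\dim V_1(\mathcal{P}) = 2V$ and then a counting argument shows at most $F+2$ of the $2V$ eigenvalues of $\overline{T}$ can be $\le 0$. Equivalently, using the variational characterization $\sigma_k(\overline{T})=\min_{V_k}\max_{g\in V_k}\mathcal{L}(u_g,u_g)/\|g\|^2$ from Section \ref{sec:index_theorem_for_networks}, the number of non-positive $\sigma_k$ equals the maximal dimension of a subspace of locally-$(\Delta+1)$-harmonic fields on which $\mathcal{L}\le 0$, and I would bound this by exhibiting that $\mathcal{L}$ is positive definite on a complementary subspace of dimension $2V-(F+2)=2V-F-2 = E - 4 + (E-V) = \dots$ (to be simplified via Euler).

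The main obstacle I anticipate is precisely this last dimension/positivity count: showing that $\mathcal{L}$ restricted to the space of locally harmonic fields is positive-definite on a subspace of the correct codimension. The sign structure of the single-arc form $\frac{1}{\sin L}((a^2+b^2)\cos L - 2ab)$ is indefinite when $L>\pi/2$ (arcs longer than a quarter circle), so one cannot argue edge-by-edge; the global triple-junction compatibility is what rescues positivity, and making that quantitative — likely by building explicit test functions tied to the faces (in the spirit of the locally constant functions used in Proposition \ref{prop:lower_bound}, which already produce $F-1$ non-positive directions, the nullity contributing $3$ more to reach exactly $F+2$) and showing orthogonality to a positive-definite complement — is the technical crux. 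A secondary subtlety is handling the nets in Heppes' list that are not simple polyhedral graphs (e.g. those with digon faces or with an edge bounding the same face twice), where "each edge is incident with exactly two faces" and the face-based test function construction need minor adjustments; I would check these finitely many cases directly.
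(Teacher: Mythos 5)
Your overall framework is the right one (Corollary \ref{cor:index_estimate} applied to a partition, then bounding the contribution of $\overline{T}$), but the specific reduction you choose leaves the essential step unproved, and I do not believe it can be closed by the counting you sketch. Cutting at \emph{every} vertex makes each piece a single arc with $\mathrm{Ind}=\mathrm{Nul}=0$ (fine, except for the net of three half great circles, where each arc has length exactly $\pi$ and hence nullity $1$, so that case must be treated separately, as the paper also does), and it collapses everything onto the single estimate $\mathrm{Ind}(\overline{T})+\mathrm{Nul}(\overline{T})\le F+2$ on the space $V_1(\mathcal{P})$, which has dimension $2V=4F-8$. At that point the cheap bounds genuinely fail: the unrestricted edge-by-edge form $\frac{1}{\sin L}\bigl((a^2+b^2)\cos L-2ab\bigr)$ has exactly one negative and one positive eigenvalue per edge, so it has $E$ positive directions on the $2E$-dimensional space $V(\mathcal{P})$; restricting to the codimension-$(2E-2V)$ subspace $V_1(\mathcal{P})$ and interlacing gives only $\mathrm{Ind}(\overline{T})+\mathrm{Nul}(\overline{T})\le E=3F-6$, which exceeds $F+2$ for every net with $F\ge 5$ (and equals it for $F=4$). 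So the "counting argument" you invoke does not exist at this level of generality; the missing content is precisely a positivity statement for the globally coupled form on a subspace of codimension $F+2$ inside $V_1(\mathcal{P})$, and you explicitly defer it as "the technical crux" without supplying it. That is the gap.

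The paper's proof sidesteps this by choosing, for each of Heppes' nine nets, a \emph{coarse} partition into a few large symmetric pieces, so that the interface space $\overline{V}_1(\overline{\mathcal{P}})$ has dimension exactly $F+2$ (or less, with the pieces' own indices and nullities making up the difference); then the trivial bound $\mathrm{Ind}(\overline{T})+\mathrm{Nul}(\overline{T})\le \dim\overline{V}_1(\overline{\mathcal{P}})$ already suffices, and the burden shifts to computing $\mathrm{Ind}(\mathcal{N}_i)$ and $\mathrm{Nul}(\mathcal{N}_i)$ for the pieces, which is done by symmetry, by iterated subdivision, and by comparison with arcs of a great circle. The lesson is that the partition must be tuned so that the interface is small, not the pieces. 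If you want to rescue your uniform vertex-cutting scheme, you would need to prove the sharp spectral count for the assembled Dirichlet-to-Neumann form directly — essentially redoing the hard part of the theorem in a different language — or else fall back on a case-by-case choice of partition as the paper does.
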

\begin{proof}
	We will use the index theorem and the nullity theorem to give the estimate of the index and nullity of $\mathcal{N}$.

	More precisely, Corollary \ref{cor:index_estimate} is enough in this proof.
	We divide our nine networks into five classes. We will use the classification results listed in \cite{Heppes64} (see also \cite{taylor1976structure}).
	In other words, we will assume the lengths of arcs contained in these nine networks are all known to us.
	\\

	\noindent\textbf{First class.} 
	We consider the network of three half-great circles having a pair of antipodal points as their endpoints.

	Since it has only three geodesic arcs, we can get its index and nullity by directly finding its eigenvalues and eigenfunctions.
	Indeed, all of the eigenfunctions on this network come from the eigenfunctions on a great circle. \\

	\noindent\textbf{Second class.}
	We consider the two following networks coming from the regular spherical polyhedron.

	\begin{itemize}
		\item One-skeleton of a regular spherical tetrahedron.
		\item One-skeleton of a cube.
	\end{itemize}

	\begin{figure}[ht]


		\centering
\begin{subfigure}[b]{.33\linewidth}
			\centering %
	\begingroup
	\fontsize{7pt}{12pt}
	\def\svgwidth{1\columnwidth}
	\import{./figures/}{Triangle_Prism2.pdf_tex}
	\endgroup

			\caption{Partition of tetrahedron}
			\label{fig:sub_tetrahedron}
		\end{subfigure}
		\begin{subfigure}[b]{.33\linewidth}
			\centering %
	\begingroup
	\fontsize{7pt}{12pt}
	\def\svgwidth{1\columnwidth}
	\import{./figures/}{cube.pdf_tex}
	\endgroup

			\caption{Partition of cube}
			\label{fig:sub_cube}
		\end{subfigure}
		\caption{Partition of one-skeleton of the regular spherical polyhedron}
		\label{fig:partition_regular}
	\end{figure}
	
	For $\mathcal{N}$ to be one of the above networks, we will divide it into four isometric networks $\mathcal{N}_1,\cdots ,\mathcal{N}_4$ shown in Figure \ref{fig:partition_regular}. (Different colors mean different networks.)
	If we can show $\mathrm{Ind}(\mathcal{N}_i) =\mathrm{Nul}(\mathcal{N}_i)=0$ for each $i$ for all these three networks, we can get 
	\[
		\mathrm{Ind}(\mathcal{N})+ \mathrm{Nul}(\mathcal{N})= \mathrm{Ind}(\overline{T})+ \mathrm{Nul}(\overline{T})
	\]
	by Corollary \ref{cor:index_estimate}.
	Note that we have $\mathrm{Ind}(\overline{T})+ \mathrm{Nul}(\overline{T})\le \mathrm{dim}(V_1(\overline{\mathcal{P}}))$, we only need to find out the dimension of $V_1(\overline{\mathcal{P}})$ to give the estimation.

	Indeed, we have the following results.
	\begin{itemize}
		\item For the tetrahedron type, we can see all the points in $\overline{\mathcal{P}}$ are double points, so $\mathrm{dim}V_1(P)=1$ for all $P \in \overline{\mathcal{P}}$.
			Hence $\mathrm{dim}V_1 (\overline{\mathcal{P}})= 6=F+2$.
		\item For the cube type, all $P \in \overline{\mathcal{P}}$ are triple points. So $\mathrm{dim}V_1(P)=2$.
			Hence $\mathrm{dim}V_1 (\overline{\mathcal{P}})= 2\times 4=8=F+2$.
	\end{itemize}

	The only thing we have left is to check if each network satisfies
\begin{align}\mathrm{Ind}(\mathcal{N}_i)= \mathrm{Nul}(\mathcal{N}_i)=0
\label{eq:pf_ind_nul}.\end{align}
	For the tetrahedron type and cube type, 
	based on the computation we have done in Example \ref{ex:3nets}, we know $\mathrm{Ind}(\mathcal{N}_i)=\mathrm{Nul}(\mathcal{N}_i)=0$ by Corollary \ref{cor_3nets}.\\



	\noindent\textbf{Third class.}
	We consider the networks of prism type. There are two types of them (Actually three if we count the cube as a prism, too),
	\begin{itemize}
		\item One-skeleton of the prism over a regular triangle.
		\item One-skeleton of the prism over a regular pentagon.
	\end{itemize}

	We will divide this network into two isometric parts by a great circle with the same axis as this prism.
	We write it as $\mathcal{N}= \mathcal{N}_1 \cup  \mathcal{N}_2$.

	We only focus on $3$-prism (the prism over a regular triangle) since the method here can be applied to $5$-prism, too.

	\begin{figure}[ht]
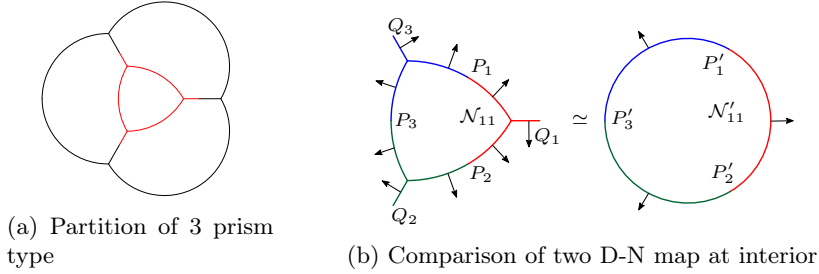

		\centering
\begin{subfigure}[b]{.29\linewidth}
			\centering %
	\begingroup
	\fontsize{7pt}{12pt}
	\def\svgwidth{0.72\columnwidth}
	\import{./figures/}{3_prism.pdf_tex}
	\endgroup

			\caption{Partition of 3 prism type}
			\label{fig:sub_3_prism}
		\end{subfigure}
		\begin{subfigure}[b]{.65\linewidth}
			\centering %
	\begingroup
	\fontsize{7pt}{12pt}
	\def\svgwidth{0.72\columnwidth}
	\import{./figures/}{3_prism_cut.pdf_tex}
	\endgroup

			\caption{Comparison of two Dirichlet-to-Neumann map at interior}
			\label{fig:3_prism_cut}
		\end{subfigure}
		\caption{Partition of prism-type networks}
		\label{fig:partition_3_prism}
	\end{figure}

	For the subnetwork $\mathcal{N}_1$, we want to compute its index and nullity with zero Dirichlet boundary condition and want to know how the Dirichlet-to-Neumann map is defined on the boundary.

	In order to do that, we will still consider dividing it into three isometric parts as $\mathcal{N}_1= \mathcal{N}_{11}\cup \mathcal{N}_{12} \cup \mathcal{N}_{13}$ and try to compute the Dirichlet-to-Neumann map on set $\overline{\mathcal{P}} =\{ P_1,P_2,P_3 \}$. 
	See Figure \ref{fig:3_prism_cut} for the partition and the choice of orientation for each curve. Since $\mathcal{N}_{1i}$ is stable and has zero null space by Corollary \ref{cor_3nets}, we only need to compute the index and nullity of the Dirichlet-to-Neumann map on $\overline{\mathcal{P}}$. 

	Let us focus on network $\mathcal{N}_{11}$ first. 
	By taking $l_1=l_2=\arccos\frac{1}{\sqrt{3}},l_3=\arccos \frac{2\sqrt{2}}{3}$ in \eqref{eq:matrixM3arc}, we can find the matrix form $M$ of the Dirichlet-to-Neumann map $\mathcal{N}_{11}$ is given by
	\[
		M=
		\begin{bmatrix}
			-\frac{\sqrt{2}}{3} & \frac{2\sqrt{2}}{3} & * \\
			\frac{2\sqrt{2}}{3} & -\frac{\sqrt{2}}{3} & * \\
			* & * & * \\
		\end{bmatrix}.
	\]
	The $*$ means the values that we do not need to compute.
	Then we know the Dirichlet-to-Neumann map on $\left\{ P_1,P_2 \right\}$ is given by
	\[
		\tilde{M}=
		\begin{bmatrix}
			-\frac{\sqrt{2}}{3} & \frac{2\sqrt{2}}{3} \\
			\frac{2\sqrt{2}}{3} & -\frac{\sqrt{2}}{3} \\
		\end{bmatrix}.
	\]
	Now if we take another network $\mathcal{N}'_{11}$, which contains only one geodesic arc with length $\frac{2\pi}{3}$ and compute its matrix form $M'$ of Dirichlet-to-Neumann map on boundary. 
	By Example \ref{ex:6-3subnet}, we know $M'$ is given by
	\[
		M'=
		\begin{bmatrix}
			-\frac{1}{\sqrt{3}} & \frac{2}{\sqrt{3}}\\
			\frac{2}{\sqrt{3}} & -\frac{1}{\sqrt{3}}\\
		\end{bmatrix}.
	\]
	
	Note that $\tilde{M}=\sqrt{\frac{2}{3}}M'$.
	So we can do a comparison of partition of $\mathcal{N}_1=\mathcal{N}_{11} \cup \mathcal{N}_{12} \cup \mathcal{N}_{13}$  and partition of a unit circle $\mathcal{N}_1'= \mathcal{N}_{11}' \cup \mathcal{N}_{12}' \cup \mathcal{N}_{13}'$ as shown in Figure \ref{fig:3_prism_cut} where $\mathcal{N}_{1i}'$ will be geodesic arc with length $\frac{2\pi}{3}$.
	Since they have the same way to form a big network, they will have the same index and nullity. In particular, $\mathcal{N}_1$ will have index one and nullity two.


	Now since $\mathcal{N}_1$ has nullity two, the Dirichlet-to-Neumann map $T_1$ of $\mathcal{N}_1$ is defined on $(D_\tau\mathcal{J}_0^0( \mathcal{Q} ))^\bot $, which has dimension 1. 
	Note that since $Q_1,Q_2,Q_3$ are double points, and $\mathcal{N}$ is symmetric with respect to the reflection on the plane containing $Q_1,Q_2,Q_3$, we know the space $V_1(\mathcal{P})$ is isomorphic to $V(\left\{ Q_1,Q_2,Q_3 \right\})$.
	Moreover, $T_1$ is same with $\overline{T}$ up to the isomorphism mentioned above.
	Hence, $\overline{T}$ is defined on a space of dimension 1.
	So we know
\begin{align*}
		\mathrm{Ind}(\mathcal{N})+ \mathrm{Nul}(\mathcal{N})
		={}&(\mathrm{Ind}(\overline{T})+ \mathrm{Nul}(\overline{T}))+ \sum_{i =1}^{2}
		\mathrm{Ind}(\mathcal{N}_i)
		+\mathrm{Nul}(\mathcal{N}_i)\\
		\le{}&
		1+2\times (1+2)=7=F+2.
	\end{align*}

\begin{remark}
	\label{rmk_dimVbar}
	Indeed, since $\mathcal{N}_2$ equals to $\mathcal{N}_1$ after a reflection on the plane containing $Q_1,Q_2,Q_3$, we can find the dimension of the space $\overline{V}_1(\overline{\mathcal{P}})$ with respect to the partition $\mathcal{N}= \mathcal{N}_1 \cup \mathcal{N}_2$ is given by
	\[
		\mathrm{dim}\overline{V}_1(\overline{\mathcal{P}})=\mathrm{dim}V_1(\overline{\mathcal{P}})-\mathrm{dim}(D_\tau\mathcal{J}_0^0(\mathcal{Q}_i))=1,
	\]
	in view of \eqref{eq:V1decomp} and definition of $F_1$, where $i=1$ or $2$.
\end{remark}

	A similar method can be applied to 5-prism type networks. This time, we will know each $\mathcal{N}_i$ will still have index 1 and nullity 2 coming from the index and nullity of a unit circle. So $\overline{T}$, in this case, will be defined in the space of dimension 3.
	This will give
	\[
		\mathrm{Ind}(\mathcal{N})+ \mathrm{Nul}(\mathcal{N})\le 3+2\times 3=9=F+2
	\]
	for 5-prism type case.\\
	
	\noindent\textbf{Fourth class.}
	For the networks in this class, we do not have a uniform way to cut them into several isometric parts. So we will analyze them one by one.
	There are two of them.
	\begin{itemize}
		\item One skeleton of a spherical polyhedron with four equal pentagons and four equal quadrilaterals. We call it a 4-4 type.
		\item One skeleton of a spherical polyhedron with six equal pentagons and three regular quadrilaterals.
			We call it a 6-3 type.
	\end{itemize}
	
	1. For $\mathcal{N}$ to be 4-4 type, we will still divide it into two isometric parts $\mathcal{N}_1$ and $\mathcal{N}_2$ as shown in Figure \ref{fig:8faces} by a great circle.

	\begin{figure}[ht]
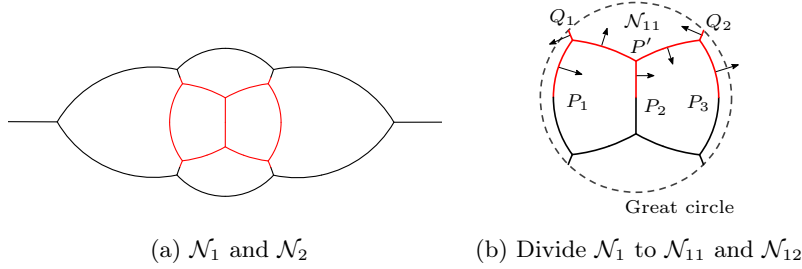

		\centering
		\begin{subfigure}[b]{.61\linewidth}
			\centering %
	\begingroup
	\fontsize{7pt}{12pt}
	\def\svgwidth{0.94\columnwidth}
	\import{./figures/}{8faces.pdf_tex}
	\endgroup

			\caption{$\mathcal{N}_1$ and $\mathcal{N}_2$}
			\label{fig:8faces}
		\end{subfigure}
		\begin{subfigure}[b]{.37\linewidth}
			\centering %
	\begingroup
	\fontsize{7pt}{12pt}
	\def\svgwidth{0.72\columnwidth}
	\import{./figures/}{8faces_cut.pdf_tex}
	\endgroup

			\caption{Divide $\mathcal{N}_1$ to $\mathcal{N}_{11}$ and $\mathcal{N}_{12}$}
			\label{fig:8_prism_cut}
		\end{subfigure}
		\caption{Partition of 4-4 type network}
		\label{fig:partition_8_faces}
	\end{figure}

	We will show each $\mathcal{N}_i$ will have index 3 and nullity 0.
	Again, we divide $\mathcal{N}_1$ into two subnetworks $\mathcal{N}_{11}$ and $\mathcal{N}_{12}$ as shown in Figure \ref{fig:partition_8_faces}.

	To compute the index and nullity of $\mathcal{N}_{11}$, we divide it into three subnetworks $\mathcal{N}_{111}$, $\mathcal{N}_{112}$ and $\mathcal{N}_{113}$ along point $P'$ as shown in Figure \ref{fig:8_prism_cut}.
	We assume $\mathcal{N}_{113}$ is the network associated with arc $P'P_3$.
	At first, we know $\mathcal{N}_{111},\mathcal{N}_{112},\mathcal{N}_{113}$ have index 0 and nullity 0 by Example \ref{ex:arc} and Corollary \ref{cor_3nets}.
	We directly compute the matrix form of the Dirichlet-to-Neumann map of $\mathcal{N}_{111}$ by \eqref{eq:matrixM3arc} with order of the boundary points as $\left\{ Q_1,P_1,P' \right\}$ and denote it $M$.
	We know the Dirichlet-to-Neumann map of $\mathcal{N}_{111}$ at $P'$ is given by
	\[
		a:=M_{33}=\frac{1-\tan l_3(\tan l_1+\tan l_2)}{\tan l_1+\tan l_2+\tan l_3}.
	\]
	Here,
	\begin{align*}
		l_1={} & \text{ the length of arc }Q_1\mathring P,\\
		l_2={} & \text{ the length of arc }P_1\mathring P,\\
		l_3={}& \text{ the length of arc }P'\mathring P.
	\end{align*}

	The Dirichlet-to-Neumann map of $\mathcal{N}_{113}$ at $P'$ is given by
	\[
		b:=\cot l_2,
	\]
	by Example \ref{ex:arc}.
	Now, in view of the computation in Example \ref{ex:3nets}, we can check $\sigma_2=2ab+a^2$ defined in \eqref{eq:sigma} is negative.
	Here, we have used the approximate values of $l_1,l_2,l_3$ to find the sign of $\sigma_2$ (cf., \cite[p. 502]{taylor1976structure}).
	This will imply the index of $\mathcal{N}_{11}$ is 1 and nullity is 0.


	Now, let us consider the Dirichlet-to-Neumann map $\overline{T}$ with respect to the partition $\mathcal{N}_1=\mathcal{N}_{11}\cup \mathcal{N}_{12}$.
	By the symmetric properties of $\mathcal{N}_1$, we know $\overline{T}$ will equal to the Dirichlet-to-Neumann map $T$ of $\mathcal{N}_{11}$ restricted on $\left\{ P_1,P_3,P_2 \right\}$.
	To compute the matrix form of $T$, we can ignore the actions at point $Q_1$ and $Q_2$ and view $\mathcal{N}_{11i}$ as a network with only two boundary points $P_i,P'$.
	Then, we know the matrix form $\overline{M}$ of $\overline{T}$ can be given by \eqref{eq:matrixM} with $
	\begin{bmatrix}
		M_{33} & M_{32} \\
		M_{23} & M_{22}
	\end{bmatrix}$ in place of $M^1$ and $M^2$,
	$
	\begin{bmatrix}
		\cot l_2 & -\csc l_2\\
		-\csc l_2 & \cot l_2
	\end{bmatrix}$ in place of $M^3$.
	Then, we need to verify $\overline{M}$ has two positive eigenvalues and one negative eigenvalue.
	This can be done by following method.
	One can check the vector $(1,-1,0)$ is indeed an eigenvector of $\overline{M}$ with positive eigenvalue.
	Then, we can compute the determinate of $\overline{M}$ and find it is a negative number.
	Thus, $\overline{M}$ has two positive eigenvalues and one negative eigenvalue.

	\begin{remark}
		It is not hard to see the vector $(1,-1,0)$ is an eigenvector by the symmetric property of network $\mathcal{N}_{11}$ but it might not easy to see the corresponding eigenvalue is positive.
		To do that, we need to calculate the numerical value of $\overline{M}$ to determine how many positive eigenvalues it has.
		This has been done by a Python code I wrote.
		See \ref{app1:D-N} for details.
	\end{remark}


	

	Now, by our Theorem \ref{thm_index_theorem_for_network} and Theorem \ref{thm_nullity_theorem_for_network}, we know $\mathcal{N}$ has index 3 and nullity 0.



	Hence, we know the Dirichlet-to-Neumann map $\overline{T}$ with respect to the partition $\mathcal{N}=\mathcal{N}_1 \cup  \mathcal{N}_2$ defined on the 4-dimensional space.

	So 
	\[
		\mathrm{Ind}(\mathcal{N})+ \mathrm{Nul}(\mathcal{N})\le 4+2\times 3=10=F+2.
	\]

	2. For $\mathcal{N}$ to be 6-3 type, we will divide $\mathcal{N}$ into three isometric parts $\mathcal{N}_1, \mathcal{N}_2, \mathcal{N}_3$ as shown in Figure \ref{fig:partition_9_faces}.

	\begin{figure}[ht]
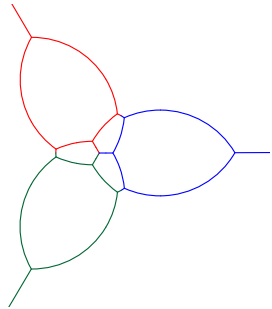

		\centering 
		\begin{subfigure}[b]{.35\linewidth}
 	\centering
	\begingroup
	\fontsize{7pt}{12pt}
	\def\svgwidth{1\columnwidth}
	\import{./figures/}{9faces.pdf_tex}
	\endgroup

 	\caption{$\mathcal{N}_1$, $\mathcal{N}_2$, and $\mathcal{N}_3$}
 	\label{fig:9faces}
 \end{subfigure}
\begin{subfigure}[b]{.58\linewidth}
 	\centering
	\begingroup
	\fontsize{7pt}{12pt}
	\def\svgwidth{1\columnwidth}
	\import{./figures/}{9faces_sub.pdf_tex}
	\endgroup

 	\caption{Divide $\mathcal{N}_1$ into four parts}
 	\label{fig:9faces_sub}
 \end{subfigure}
		\caption{Partition of 6-3 type network}
		\label{fig:partition_9_faces}
	\end{figure}

	Again, we can show that each $\mathcal{N}_i $ has index 1 and nullity 1 by dividing it into four parts. We will put the detailed calculation in Example \ref{ex:6-3subnet}.

	Moreover, the null space of $\mathcal{N}_i$ is still generated by the rotation that fixed $0$ and $\infty$ as shown in Figure \ref{fig:9faces}.
	We can see the Dirichlet-to-Neumann map on $\mathcal{N}_i$ is well-defined if and only if for $g$ defined on the boundary of $\mathcal{N}_i$, $g$ takes opposite values on $0$ and $\infty$ with suitable orientation.
	So we can find the Dirichlet-to-Neumann map $\overline{T}$ for $\mathcal{N}= \mathcal{N}_1 \cup  \mathcal{N}_2 \cup  \mathcal{N}_3$ defined on a space of dimension at most 5. Hence
	\[
		\mathrm{Ind}(\mathcal{N})+ \mathrm{Nul}(\mathcal{N})\le 5+3\times 2=11=F+2.
	\]

	\noindent\textbf{Fifth class.} For the last two networks, we do not have a good partition to reduce the burden of calculation.
	The possible way is to calculate the Dirichlet-to-Neumann map by writing an algorithm to compute the Dirichlet-to-Neumann map directly.
	Indeed, we have written a Python code to help us determine the index and nullity of a given network. See \ref{app1:D-N} for the link to the code.
	Here, we list the key points of the calculation.

	For the case of regular spherical dodecahedron, we divide it into six isometric parts as shown in Figure \ref{fig:12-faces2}.
	\begin{figure}[ht]
		\centering 
		\begin{subfigure}[b]{.35\linewidth}
 	\centering
	\begingroup
	\fontsize{7pt}{12pt}
	\def\svgwidth{1\columnwidth}
	\import{./figures/}{12-faces2.pdf_tex}
	\endgroup

 	\caption{Partition of the dodecahedron type network}
 	\label{fig:12-faces2}
 \end{subfigure}
 \begin{subfigure}[b]{0.36\linewidth}
		\centering 
	\begingroup
	\fontsize{7pt}{12pt}
	\def\svgwidth{1\columnwidth}
	\import{./figures/}{10faces.pdf_tex}
	\endgroup

		\caption{Partition of the 8-2 type network}
		\label{fig:partition_10_faces}
	\end{subfigure}
		\caption{Partition of the dodecahedron type network and the 8-2 type network}
		\label{fig:sub_doda}
	\end{figure}

	First, we can compute the index and nullity of each subnetwork using Proposition \ref{prop:matrix_form_of_DN_map}.
	It will tell us these subnetworks have index zero and nullity zero.
	Then, we can compute the Dirichlet-to-Neumann map on boundary of each subnetwork and the Dirichlet-to-Neumann map $\overline{T}$ with respect to the partition shown in Figure \ref{fig:12-faces2} using Proposition \ref{prop:matrix_form_of_DN_map} again.
	Numerical results show that $\overline{T}$ has at least two positive eigenvalues.
	Note that $\overline{T}$ is defined on a space of dimension 16.
	Hence $\mathrm{Ind}(\mathcal{N})+\mathrm{Nul}(\mathcal{N})\le 16-2=14=F+2$.



	The last network $\mathcal{N}$ is the one skeleton of a spherical polyhedron with eight equal pentagons and two regular quadrilaterals.
	We call it an 8-2 type.

	We divide $\mathcal{N}$ into 8 isometric parts as shown in Figure \ref{fig:partition_10_faces} (only one subnetwork is marked).
	We can show that each subnetwork has index 0 and nullity 0 by the direct computation based on Proposition \ref{prop:matrix_form_of_DN_map}.
	

	Then, we use Proposition \ref{prop:matrix_form_of_DN_map} to the Dirichlet-to-Neumann map $\overline{T}$ with respect to this partition and find at least four positive eigenvalues.

	
	This will show 
	\[
		\mathrm{Ind}(\mathcal{N})
		+\mathrm{Nul}(\mathcal{N})\le 12=F+2.
	\]
	
	Hence, we finish the proof for all cases.
\end{proof}

\begin{proof}
	[Proof of Theorem \ref{thm_index_and_nullity_of_triple_junction_networks}]
	From the Proposition \ref{prop:lower_bound} and Proposition \ref{prop:upper_bound}, we know
	\[
		\mathrm{Ind}(\mathcal{N})=F-1,\quad 
		\mathrm{Nul}(\mathcal{N})=3.
	\]
	
	Moreover, by the proof of Proposition \ref{prop:lower_bound}, we know $\mathrm{dim}(\mathcal{J}_{-1}(L))=F-1$ and $\mathrm{dim}(\mathcal{J}_0(L))=3$. Hence we should have $\mathcal{J}_0^{-}(\mathcal{N})=\mathcal{J}_{-1}(L)$ and $\mathcal{J}_0^0(\mathcal{N})=\mathcal{J}_0(L)$. So the conclusion of this theorem is directly followed by the results in Proposition \ref{prop:lower_bound}.
\end{proof}

\subsection*{Acknowledgements}
	I would like to thank my advisor Prof. Martin Li for his helpful discussions and encouragement. I would also like to thank Prof. Frank Morgan for his comments on several typos and for providing several references.
	
	The author is partially supported by a research grant from the Research Grants Council of the Hong Kong Special Administrative Region, China [Project No.: CUHK 14301319] and CUHK Direct Grant [Project Code: 4053338].


\appendix

\section{Computation of Dirichlet-to-Neumann maps}
\label{app1:D-N}

\renewcommand{\thesection}{\Alph{section}}

We will give a direct method of computing the index, nullity, and Dirichlet-to-Neumann maps of a given network.
Recall that there are indeed two types of Dirichlet-to-Neumann maps.
The first one is the Dirichlet-to-Neumann map defined on the boundary of a given network $\mathcal{N}$ (defined in Definition \ref{def_DNmap}), and another one is the Dirichlet-to-Neumann map with respect to a partition of the network (defined in Definition \ref{def_DNpartition}).
Before that, we fix some notation used in this section.

Given a network $\mathcal{N}\subset \mathbb{S}^2$ with $\mathcal{Q}=\left\{ q_1,\cdots ,q_k \right\}$ where $k=\# \mathcal{Q}$, if $\mathrm{Nul}(\mathcal{N})=0$, then we know the Dirichlet-to-Neumann map $T$ of $\mathcal{N}$ is defined on space $V(\mathcal{Q})$.
We also fix an orientation on $\mathcal{N}$ such that the unit normal vector field $\nu$ satisfies $\nu(q_i)=R \tau(q_i)$ where $R$ is a counterclockwise rotation on $\mathbb{S}^2$.

\begin{remark}
	Sometimes, it seems impossible to choose such an orientation.
	For example, $\mathcal{N}$ could contain only one curve.
	But we can always add several interior points if needed to make it possible. Note that the Dirichlet-to-Neumann map only depends on the image of $\mathcal{N}$ in view of the proof of Proposition \ref{prop_congruent}.
\end{remark}

For the network $\mathcal{N}$ described above, we write $M^{\mathcal{N}}=(M_{ij}^{\mathcal{N}})_{i,j=1}^k$ as the matrix form of the Dirichlet-to-Neumann map $T$ defined on $V(\mathcal{Q})$.
In other words, for any fixed $1\le i,j\le k$, if $u \in C_1^\infty(\mathcal{N})$ with $Lu=0$ on $\mathcal{N}$, $u(q_l)=0$ for any $l\neq i$, $u(q_i)=1$, and $u$ satisfies the compatibility condition,
then $M_{ij}^{\mathcal{N}}=\frac{\partial u}{\partial \tau_{q_j}}(q_j)$.
It is easy to see that $M^{\mathcal{N}}$ is symmetric.
Note that $M^{\mathcal{N}}$ depends on the order of the points in set $\mathcal{Q}$.

\begin{example}
	\label{ex:arc}
	If the image of $\mathcal{N}$ is a simple curve of length $l$ with $\mathcal{Q}=\left\{ q_1,q_2 \right\}$, $l \neq k \pi$ for $k \in \mathbb{Z}$, then the matrix form $M^{\mathcal{N}}$ of the Dirichlet-to-Neumann map of $\mathcal{N}$ is given by
	\begin{equation}
		M^{\mathcal{N}}=
		\begin{bmatrix}
			\cot l & -\csc l\\
			-\csc l& \cot l \\
		\end{bmatrix}.
		\label{eq:fundBlock}
	\end{equation}

	To see this, let us choose $s$ to be the arc length parameterization of curve $\gamma$.
	We choose $q_1=\gamma(0)$, $q_2=\gamma(l)$.
	To find the value $M_{22}^{\mathcal{N}}$, we need to consider the function $u$ on $\gamma$ such that
	\[
		\frac{d^{2}}{ds^{2}}u+u=0,\quad u(0)=0,\quad  u(l)=1.
	\]
	Hence, we find $u(s)=\frac{\sin s}{\sin l}$.
	Then, $M_{22}^{\mathcal{N}}=\frac{d}{ds}u(l)=\cot l$.
	For the value of $M_{21}^{\mathcal{N}}$, note that $\tau(\gamma(0))=-\frac{\partial }{\partial s}$, then we find
	\[
		M_{21}^{\mathcal{N}}=-\frac{d}{ds}u(0)=-\csc l.
	\]

	Similarly for the values of $M_{11}^{\mathcal{N}}$ and $M_{12}^{\mathcal{N}}$.
	But we should keep in mind the choice of orientation.
	This means we should choose another arc length parameterization of $\gamma$ such that $\gamma(0)=q_2$, $\gamma(l)=q_1$ to find the correct values of $M_{11}^{\mathcal{N}}$ and $M_{12}^{\mathcal{N}}$.
\end{example}

Later on, we will find that we can compute any Dirichlet-to-Neumann map of a given network $\mathcal{N}$ only based on fundamental matrix \eqref{eq:fundBlock} together with some operations described below.

Suppose $\mathcal{N}$ is the network with partition $\left\{ \mathcal{N}_i \right\}_{i=1}^l$ we have defined in Section \ref{sec:index_theorem_for_networks}.
We will give an algorithm to compute the Dirichlet-to-Neumann map of $\mathcal{N}$ using the Dirichlet-to-Neumann map of each $\mathcal{N}_i$.
We need to assume $\mathrm{Nul}(\mathcal{N}_i)=0$ for each $i$ so that the Dirichlet-to-Neumann map $T_i$ of $\mathcal{N}_i$ is well-defined on $V(\mathcal{Q}_i)$.

Recall that, we have written $\mathcal{Q}_i=\tilde{\mathcal{Q}}_i\cup \overline{\mathcal{Q}}_i$.
We arrange the points in $\mathcal{Q}_i$ such that
\[
	\mathcal{Q}_i=\{\tilde{q}_1,\cdots ,\tilde{q}_{\tilde{k}_i},\overline{q}_1,\cdots ,\overline{q}_{\overline{k}_i}\},
\]
where $\tilde{q}_i \in \tilde{\mathcal{Q}}_i, \overline{q}_i \in \overline{\mathcal{Q}}_i$, $\tilde{k}_i=\# \mathcal{\tilde{Q}}_i$, $\overline{k}_i=\# \overline{\mathcal{Q}}_i$.
For simplicity, we write $\tilde{k}=\tilde{k}_1+\cdots +\tilde{k}_l$, $\overline{k}=\overline{k}_1+\cdots +\overline{k}_l$.

Then, we can write the matrix form of Dirichlet-to-Neumann map $T_i$ as
\[
	M^i=
	\begin{bmatrix}
		A_{11}^i & A_{12}^i\\
		A_{21}^i& A_{22}^i \\
	\end{bmatrix},
\]
where $A_{11}^i$ is a $\tilde{k}_i\times \tilde{k}_i$ matrix, $A_{22}^i$ is a $\overline{k}_i\times \overline{k}_i$ matrix.
We define a block matrix $P$ by
\[
	P=
	\begin{bmatrix}
		A_{11}^1 & 0 & \cdots & 0 & A_{12}^1 & 0 & \cdots & 0\\
		0 & A_{11}^2 & \cdots & 0 & 0 & A_{12}^2 & \cdots & 0\\
		\vdots & \vdots & \ddots & \vdots & \vdots & \vdots & \ddots & \vdots\\
		0 & 0 & \cdots & A_{11}^l & 0 & 0 & \cdots & A_{12}^l\\
		A_{21}^1 & 0 & \cdots & 0 & A_{22}^1 & 0 & \cdots & 0\\
		0 & A_{21}^2 & \cdots & 0 & 0 & A_{22}^2 & \cdots & 0\\
		\vdots & \vdots & \ddots & \vdots & \vdots & \vdots & \ddots & \vdots\\
		0 & 0 & \cdots & A_{21}^l & 0 & 0 & \cdots & A_{22}^l
	\end{bmatrix}=:
		\begin{bmatrix}
			P_{11} & P_{12}\\
			P_{21} & P_{22}\\
		\end{bmatrix},
\]
where $P_{11}$ is a $\tilde{k}\times \tilde{k}$ matrix, $P_{22}$ is a $\overline{k}\times \overline{k}$ matrix.
We can view $P$ as the matrix form of Dirichlet-to-Neumann map of $\cup _{i=1}^l\mathcal{N}_i$ without identifying the points in $\overline{\mathcal{P}}$ and it acts on space $V(\cup _{i=1}^l \mathcal{Q}_i)$ where the order of the points in $V(\cup _{i=1}^l \mathcal{Q}_i)$ is given by
\begin{equation}
	V(\cup _{i=1}^l \mathcal{Q}_i)=\left\{ \tilde{q}_1^1,\cdots ,\tilde{q}_{\tilde{k}_1}^1,\cdots ,\tilde{q}_1^l,\cdots ,\tilde{q}_{\tilde{k}_l}^l,\overline{q}_1^1,\cdots ,\overline{q}_{\overline{k}_1}^1,\cdots ,\overline{q}_1^l,\cdots ,\overline{q}_{\overline{k}_l}^l \right\}.
	\label{eq:orderOfptsAllQ}
\end{equation}


We write $\tilde{Q}$ such that rows of $\tilde{Q}$ form the orthonormal basis of $V_1(\cup _{i=1}^l \tilde{\mathcal{Q}}_i)$ where we view $V_1(\cup _{i=1}^l \tilde{\mathcal{Q}}_i)$ as a subspace of $V(\cup _{i=1}^l \mathcal{\tilde{Q}}_i)$, and $\tilde{Q}^\bot$ such that rows of $\tilde{Q}^\bot$ form the orthonormal basis of the orthogonal complement of $\tilde{Q}$ in $V(\cup _{i=1}^l \mathcal{\tilde{Q}}_i)$.
Note that $\tilde{Q}$ is a $\mathrm{dim}(V_1(\overline{\mathcal{P}}))$ by $\tilde{k}$ matrix. 

%
Later on, we will use an example to illustrate those matrices.

We also arrange the points in $\mathcal{Q}$ such that
\[
	\mathcal{Q}=\left\{ \overline{q}_1^1,\cdots ,\overline{q}_{\overline{k}_1}^1,\cdots ,\overline{q}_1^l,\cdots ,\overline{q}_{\overline{k}_l}^l \right\}.
\]

\begin{proposition}
	\label{prop:matrix_form_of_DN_map}
	The matrix form $\overline{M}$ of the Dirichlet-to-Neumann map $\overline{T}$ with respect to the network $\mathcal{N}$ with its partition $\left\{ \mathcal{N}_i \right\}_{i=1}^l$ is given by
	\[
		\overline{M}=
		\tilde{Q} P_{11} \tilde{Q}^T.
	\]
	If we also assume $\mathrm{Nul}(\mathcal{N})=0$, then the matrix form $M$ of the Dirichlet-to-Neumann map $T$ with respect to the network $\mathcal{N}$ on the boundary $\mathcal{Q}$ is given by	
	\begin{equation}
		M=P_{22}-
		P_{21}\begin{bmatrix}
			\tilde{Q} P_{11} \\ \tilde{Q}^\bot
		\end{bmatrix}^{-1}
		\begin{bmatrix}
			\tilde{Q}P_{12} \\ 0 \\
		\end{bmatrix}.
		\label{eq:propMmatrix}
	\end{equation}

	Here, the number $0$ in the right hand side of \eqref{eq:propMmatrix} is a $\tilde{k}-\mathrm{dim}(V_1(\overline{\mathcal{P}}))$ by $\overline{k}$ zero matrix.
\end{proposition}
\begin{proof}
	We first prove the first statement.
	Recall that $P_{11}$ can be viewed as the matrix form of the Dirichlet-to-Neumann map $\hat{T}$ described in \eqref{eq:That}.
	On the other hand, we can view $\tilde{Q}$ as a projection from $V(\tilde{Q})$ to $V_1(\tilde{Q})$.
	Based on the definition of $\overline{T}$ in \eqref{eq:Tbar}, we know
	\[
		\overline{M}=\tilde{Q} P_{11} \tilde{Q}^T.
	\]

	Next, we prove the second statement.
	Since $\mathrm{Nul}(\mathcal{N})=0$, we know the matrix $\overline{M}$ is invertible by Theorem \ref{thm_nullity_theorem_for_network}.

	Given any $f\in V(\mathcal{Q})$, we know there exists $u \in C_1^\infty(\mathcal{N})$ which solves the following problem,
	\[
		\begin{cases}
			Lu=0 & \text{ on } \mathcal{N},\\
			u=f & \text{ on } \mathcal{Q},\\
			u \in V_1(\mathcal{P}),\\
			\frac{\partial u}{\partial \tau} \in V_2(\mathcal{P}).
		\end{cases}
	\]
	Then $Mf$ is given by the value of $\frac{\partial u}{\partial \tau}$ restricted on $\mathcal{Q}$.

	Let $\tilde{f}$ be the value of $u$ on $\mathcal{\mathcal{P}}$.
	Then, $Mf$ can be written as
	\begin{equation}
		Mf=P_{21}\tilde{f}+P_{22}f.
		\label{eq:pfMf}
	\end{equation}
	
	Note that we know
	\begin{equation}
		\tilde{Q}^\bot \tilde{f}=0,
		\label{eq:pfftilbot}
	\end{equation}
	since $u \in V_1(\mathcal{P})$.

	On the other hand, we write the image of $(\tilde{f},f)$ under $P$ as $(\tilde{g},g)$.
	That is, we have
	\begin{equation}
		\tilde{g}=P_{11}\tilde{f}+P_{12}f, \quad g=P_{21}\tilde{f}+P_{22}f.
		\label{eq:pfgtil}
	\end{equation}

	Then, from $\frac{\partial u}{\partial \tau} \in V_2(\mathcal{P})$, we have
	\[
		\tilde{Q} \tilde{g}=0.
	\]

	Using \eqref{eq:pfgtil}, we have
	\[
		\tilde{Q}P_{11}\tilde{f}=-\tilde{Q}P_{12}f.
	\]

	Together with \eqref{eq:pfftilbot}, we can solve $\tilde{f}$ and find
	\[
		\tilde{f}=
		\begin{bmatrix}
			\tilde{Q} P_{11} \\ \tilde{Q}^\bot
		\end{bmatrix}^{-1}
		\begin{bmatrix}
			-\tilde{Q}P_{12}\\ 0
		\end{bmatrix}f.
	\]

	Putting this into \eqref{eq:pfMf}, we have
	\[
		Mf=P_{21}\tilde{f}+P_{22}f=
	-	
		P_{21}\begin{bmatrix}
			\tilde{Q} P_{11} \\ \tilde{Q}^\bot
		\end{bmatrix}^{-1}
		\begin{bmatrix}
			\tilde{Q}P_{12} \\ 0
		\end{bmatrix}f+P_{22}f.
	\]

	Therefore, we know $M$ is given by \eqref{eq:propMmatrix}.

	One thing left, we need to show the matrix $
	\begin{bmatrix}
		\tilde{Q} P_{11} \\ \tilde{Q}^\bot
	\end{bmatrix}$ is invertible.
	Since $\tilde{Q}P_{11}\tilde{Q}^\bot $ is invertible, we know
	\[
		\begin{bmatrix}
			\tilde{Q} P_{11} \tilde{Q}^T  & * \\
			0& I_{\overline{k}} \\
		\end{bmatrix}
	\]
	is invertible.
	Here, $I_{\overline{k}}$ is a $\overline{k}$ by $\overline{k}$ identity matrix.
	Note that
	\[
		\begin{bmatrix}
			\tilde{Q} P_{11} \\ \tilde{Q}^\bot
		\end{bmatrix}
		\begin{bmatrix}
			\tilde{Q}^T & (\tilde{Q}^\bot )^T
		\end{bmatrix}=
		\begin{bmatrix}
			\tilde{Q} P_{11} \tilde{Q}^T  & * \\
			0& I_{\overline{k}}
		\end{bmatrix}
	\]
	and $
	\begin{bmatrix}
		\tilde{Q}^T & (\tilde{Q}^\bot )^T
	\end{bmatrix}$ is invertible, we know $
	\begin{bmatrix}
		\tilde{Q} P_{11} \\ \tilde{Q}^\bot
	\end{bmatrix}$ is invertible.
\end{proof}

\begin{example}
	\label{ex:3nets}
	Suppose $\mathcal{N}_i$ is a network with end point set $\mathcal{Q}_i=\left\{ q^i_1,q^i_2 \right\}$ and assume the Dirichlet-to-Neumann map is given by
	\[
		M^i=
		\begin{bmatrix}
			M_{11}^i & M_{12}^i \\
			M_{21}^i & M_{22}^i \\
		\end{bmatrix},
	\]
	for $i=1,2,3$.
	
	Now, let us identify the point $q_1^1,q_1^2,q_1^3$ to get a network $\mathcal{N}$.
	Then we know
	\begin{align*}
		P_{ij}={}&
		\begin{bmatrix}
			M_{ij}^1 & 0 & 0 \\
			0 & M_{ij}^2 & 0 \\
			0 & 0 & M_{ij}^3 \\
		\end{bmatrix},\\
		\tilde{Q}={}& 
		\begin{bmatrix}
			\frac{1}{\sqrt{2}} & -\frac{1}{\sqrt{2}} & 0 \\
			-\frac{1}{\sqrt{6}} & -\frac{1}{\sqrt{6}} & \frac{2}{\sqrt{6}} \\
		\end{bmatrix}.
	\end{align*}
	Hence,
	\[
		\overline{M}=
		\begin{bmatrix}
			\frac{1}{2}(M_{11}^1+M_{11}^2) & -\frac{1}{2\sqrt{3}}(M_{11}^2-M_{11}^1)\\
			-\frac{1}{2\sqrt{3}}(M_{11}^2-M_{11}^1)& \frac{1}{6}(M_{11}^1+M_{11}^2+4M_{11}^3)\\
		\end{bmatrix}.
	\]
	In particular, the eigenvalues of $\overline{T}$ is exactly the root of the polynomial $x^2-\frac{2}{3}(M_{11}^1+M_{11}^2+M_{11}^3)x+\frac{1}{3}(M_{11}^1M_{11}^2+M_{11}^2M_{11}^3+M_{11}^3M_{11}^1)$.

	So we can directly determine the index and nullity of $\overline{M}$ by the sign of
	\begin{align}
		\sigma_1={} & M_{11}^1+M_{11}^2+M_{11}^3, \nonumber \\
		\sigma_2={} & M_{11}^1M_{11}^2+M_{11}^2M_{11}^3+M_{11}^3M_{11}^1.\label{eq:sigma}
	\end{align}
	

	Note that one special case is, $\mathcal{N}_i$ is a simple curve of length $l_i, l_i \neq k \pi$ for $k \in \mathbb{N}$.
	For this case, we know $M_{11}^i=M_{22}^i=\cot l_i$, $M_{12}^i=M_{21}^i=-\csc l_i$ by Example \ref{ex:arc}.
	One useful result is the following.
	\begin{corollary}
		\label{cor_3nets}
		If $l_i \in (0,\frac{\pi}{2})$ for $i=1,2,3$.
		Then the network $\mathcal{N}$ described in this example satisfies $\mathrm{Ind}(\mathcal{N})=0$ and $\mathrm{Nul}(\mathcal{N})=0$.
	\end{corollary}

	For the matrix form of $M$, we need to assume $\sigma_2\neq 0$. We know $\tilde{Q}^\bot = 
	\begin{bmatrix}
		\frac{1}{\sqrt{3}} & \frac{1}{\sqrt{3}} & \frac{1}{\sqrt{3}} \\
	\end{bmatrix}$.
	Put it into \eqref{eq:propMmatrix} and after simplification, we have
	\begin{align}
		M= P_{22}
			+\frac{1}{\sigma_2}\begin{bmatrix}
			-(M_{11}^2+M_{11}^3)(M_{12}^1)^2 & M_{11}^3M_{12}^1M_{12}^2 & M_{11}^2M_{12}^1M_{12}^3 \\
			M_{11}^3M_{12}^1M_{12}^2 & -(M_{11}^1+M_{11}^3)(M_{12}^2)^2 & M_{11}^1M_{12}^2M_{12}^3 \\
			M_{11}^2M_{12}^1M_{12}^3 & M_{11}^1M_{12}^2M_{12}^3 & -(M_{11}^1+M_{11}^2)(M_{12}^3)^2 \\
		\end{bmatrix}.\label{eq:matrixM}
	\end{align}
	Here, we have used $M_{12}^i=M_{21}^i$ for $i=1,2,3$ to simplify the expression.

	For our convenience, we also write down $M$ when $M_{11}^i=M_{22}^i=\cot l_i, M_{12}^i=M_{21}^i=-\csc l_i$ for $i=1,2,3$.
	\begin{align}
		&M= \frac{1}{\tan l_1+\tan l_2+\tan l_3}\left\{\vphantom{
			\begin{bmatrix}
			1 & 0 & \\
			0&	1 & 0 \\
			0&	0& 1 
\end{bmatrix}}I_3\right.\nonumber \\
			&
		-\left.\begin{bmatrix}
		\tan l_1(\tan l_2+\tan l_3)& -\frac{1}{\cos l_1
		\cos l_2} & -\frac{1}{\cos l_1 \cos l_3}\\
		-\frac{1}{\cos l_1 \cos l_2}& 
		\tan l_2(\tan l_1+\tan l_3)& -\frac{1}{\cos l_2 \cos l_3} \\
		-\frac{1}{\cos l_1 \cos l_2}& -\frac{1}{\cos l_2 \cos
		l_3} & \tan l_3(\tan l_1+\tan l_2)
	\end{bmatrix}
\label{eq:matrixM3arc}\right\}
			\end{align}
\end{example}

Now, we give another non-trivial example of how to compute the Dirichlet-to-Neumann map of a given network with its partition.
\begin{example}
	\label{ex:6-3subnet}
	We will consider the network $\mathcal{N}_{1}$ with its partition $\cup _{i=1}^4 \mathcal{N}_{1i}$ as shown in Figure \ref{fig:9faces_sub}.
At first, let us compute the matrix form $M^i$ of the Dirichlet-to-Neumann map $M$ of $\mathcal{N}_{1i}$ on its boundary.
Note that each arc in $\mathcal{N}_{11}$ has length $\arcsin(\frac{1}{\sqrt{3}})$.
By choosing $l_1=l_2=l_3=\arcsin(\frac{1}{\sqrt{3}})$ and using matrix form of $M$ in \eqref{eq:matrixM3arc}, we know $M^1$ is given by
\[
	\begin{bmatrix}
		0 & \frac{1}{\sqrt{2}} & \frac{1}{\sqrt{2}} \\
		\frac{1}{\sqrt{2}} & 0 & \frac{1}{\sqrt{2}} \\
		\frac{1}{\sqrt{2}} & \frac{1}{\sqrt{2}} & 0
	\end{bmatrix}.
\]
Similarly, we can compute the matrix form $M^2$ of the Dirichlet-to-Neumann map of $\mathcal{N}_{12}$ on its boundary.
Note that the short arc in $\mathcal{N}_{12}$ has length $\frac{\pi}{3}-\arccos(\frac{1}{\sqrt{3}})$.
By choosing $l_1=l_2=\arcsin(\frac{1}{\sqrt{3}})$, $l_3=\frac{\pi}{3}-\arccos(\frac{1}{\sqrt{3}})$, we find
\[
	M^2=
	\begin{bmatrix}
		\frac{1}{2\sqrt{3}} & \frac{1}{\sqrt{2}}+\frac{1}{2\sqrt{3}} & *\\
		\frac{1}{\sqrt{2}}+\frac{1}{2\sqrt{3}} & \frac{1}{2\sqrt{3}} & *\\
		* & * & *
	\end{bmatrix}.
\]
Here, the star $*$ means the values that we do not need to compute.

Now, we can write down the matrix $P_{11}$ in Proposition \ref{prop:matrix_form_of_DN_map} for $\mathcal{N}_{1}$ with its partition as
\[
	P_{11}=
	\begin{bmatrix}
		\tilde{M}^1 & 0 & 0 & 0\\
		0 & \tilde{M}^2 & 0 & 0\\
		0 & 0 & \tilde{M}^1 & 0\\
		0 & 0 & 0 & \tilde{M}^2
	\end{bmatrix}
\]
where
\[
	\tilde{M}^1=
	\begin{bmatrix}
		0 & \frac{1}{\sqrt{2}}\\
		\frac{1}{\sqrt{2}} & 0
	\end{bmatrix},\quad 
	\tilde{M}^2=
	\begin{bmatrix}
		\frac{1}{2\sqrt{3}} & \frac{1}{\sqrt{2}}+\frac{1}{2\sqrt{3}}\\
		\frac{1}{\sqrt{2}}+\frac{1}{2\sqrt{3}} & \frac{1}{2\sqrt{3}}
	\end{bmatrix}.
\]
Here, we ignore the points $Q_1,Q_2,Q_3,Q_4$ since we do not care about the values at $Q_i$.
By the definition of $\tilde{Q}$, we know we can choose $\tilde{Q}$ as
\[
	\tilde{Q}=
	\begin{bmatrix}
		0 & \frac{1}{\sqrt{2}} & -\frac{1}{\sqrt{2}} & 0 & 0 & 0 & 0 & 0\\
		0 & 0 & 0 & \frac{1}{\sqrt{2}} & -\frac{1}{\sqrt{2}} & 0 & 0 & 0\\
		0 & 0 & 0 & 0 & 0 & \frac{1}{\sqrt{2}} & -\frac{1}{\sqrt{2}} & 0\\
		-\frac{1}{\sqrt{2}} & 0 & 0 & 0 & 0 & 0 & 0 & \frac{1}{\sqrt{2}}
	\end{bmatrix}.
\]

Hence, the matrix form $\overline{M}$ of the Dirichlet-to-Neumann map of $\mathcal{N}_{1}$ with respect to its partition is given by
\[
	\overline{M}=\tilde{Q}P_{11}\tilde{Q}^T=
	\begin{bmatrix}
		\frac{1}{4\sqrt{3}} & -\frac{1}{2\sqrt{2}}-\frac{1}{4\sqrt{3}} & 0 & -\frac{1}{2\sqrt{2}}\\
		-\frac{1}{2\sqrt{2}}-\frac{1}{4\sqrt{3}} & \frac{1}{4\sqrt{3}} & -\frac{1}{2\sqrt{2}} & 0\\
		0 & -\frac{1}{2\sqrt{2}} & \frac{1}{4\sqrt{3}} & -\frac{1}{2\sqrt{2}}-\frac{1}{4\sqrt{3}}\\
		-\frac{1}{2\sqrt{2}} & 0 & -\frac{1}{2\sqrt{2}}-\frac{1}{4\sqrt{3}} & \frac{1}{4\sqrt{3}}
	\end{bmatrix}.
\]

We can directly check that $\overline{M}$ has two positive eigenvalues, one zero eigenvalue, and one negative eigenvalue.
So $\mathcal{N}_1$ has index 1 and nullity 1.
\end{example}

At last, we would like to mention that based on the Proposition \ref{prop:matrix_form_of_DN_map}, we can develop a Python code to compute the Dirichlet-to-Neumann map of a given network.
The code is available at \url{https://github.com/wgaom/D-Nmap}.
Indeed, we actually compute the index and nullity of every network and subnetwork numerically we have discussed in the proof of Proposition \ref{prop:upper_bound} and verify our proof.

\bibliographystyle{alpha}
\bibliography{references.bib}

\newcommand{\etalchar}[1]{$^{#1}$}
\begin{thebibliography}{FAGB{\etalchar{+}}93}

\bibitem[BR93]{bronsard1993three}
Lia Bronsard and Fernando Reitich.
\newblock On three-phase boundary motion and the singular limit of a vector-valued {Ginzburg-Landau} equation.
\newblock {\em Archive for Rational Mechanics and Analysis}, 124(4):355--379, 1993.

\bibitem[CHH{\etalchar{+}}94]{cox1994shortest}
Christopher Cox, Lisa Harrison, Michael Hutchings, Susan Kim, Janette Light, Andrew Mauer, Meg Tilton, and Frank Morgan.
\newblock The shortest enclosure of three connected areas in $\mathbb{R}^2$.
\newblock {\em Real Analysis Exchange}, pages 313--335, 1994.

\bibitem[FAGB{\etalchar{+}}93]{foisy1993standard}
Joel Foisy, Manuel Alfaro~Garcia, Jeffrey Brock, Nickelous Hodges, and Jason Zimba.
\newblock The standard double soap bubble in $\mathbb{R}^2$ uniquely minimizes perimeter.
\newblock {\em Pacific journal of mathematics}, 159(1):47--59, 1993.

\bibitem[Fra07]{fraser2007index}
Ailana Fraser.
\newblock Index estimates for minimal surfaces and $k$-convexity.
\newblock {\em Proceedings of the American Mathematical Society}, 135(11):3733--3744, 2007.

\bibitem[FS12]{fraser2012minimal}
Ailana Fraser and Richard Schoen.
\newblock Minimal surfaces and eigenvalue problems.
\newblock {\em Geometric analysis, mathematical relativity, and nonlinear partial differential equations}, 599:105--121, 2012.

\bibitem[FS16]{fraser2016sharp}
Ailana Fraser and Richard Schoen.
\newblock Sharp eigenvalue bounds and minimal surfaces in the ball.
\newblock {\em Inventiones mathematicae}, 203(3):823--890, 2016.

\bibitem[GLWZ21]{guang2021min}
Qiang Guang, Martin Man-chun Li, Zhichao Wang, and Xin Zhou.
\newblock Min--max theory for free boundary minimal hypersurfaces {II}: general {Morse} index bounds and applications.
\newblock {\em Mathematische Annalen}, 379(3):1395--1424, 2021.

\bibitem[Hep64]{Heppes64}
Alad\`ar Heppes.
\newblock Isogonal sph\"arischen netze.
\newblock Ann. Univ. Sci. Budapest E\"otv\"os Sect. Math. 7(41-48), 1964.

\bibitem[HM05]{heppes2005planar}
Aladar Heppes and Frank Morgan*.
\newblock Planar clusters and perimeter bounds.
\newblock {\em Philosophical Magazine}, 85(12):1333--1345, 2005.

\bibitem[INS14]{ilmanen2014short}
Tom Ilmanen, André Neves, and Felix Schulze.
\newblock On short time existence for the planar network flow.
\newblock {\em Journal of Differential Geometry}, 111, 07 2014.

\bibitem[LZ16]{li2016existence}
Haozhao Li and Xin Zhou.
\newblock Existence of minimal surfaces of arbitrarily large {Morse} index.
\newblock {\em Calculus of Variations and Partial Differential Equations}, 55(3):64, 2016.

\bibitem[MFG98]{morgan1998wulff}
Frank Morgan, Christopher French, and Scott Greenleaf.
\newblock Wulff clusters in $\mathbb{R}^2$.
\newblock {\em The Journal of Geometric Analysis}, 8(1):97--115, 1998.

\bibitem[MN21]{marques2021morse}
Fernando~C Marques and Andr{\'e} Neves.
\newblock Morse index of multiplicity one min-max minimal hypersurfaces.
\newblock {\em Advances in Mathematics}, 378:107527, 2021.

\bibitem[MNT04]{mantegazza2004motion}
Carlo Mantegazza, Matteo Novaga, and Vincenzo~Maria Tortorelli.
\newblock Motion by curvature of planar networks.
\newblock {\em Annali della Scuola Normale Superiore di Pisa-Classe di Scienze}, 3(2):235--324, 2004.

\bibitem[Mor16]{morgan2016geometric}
Frank Morgan.
\newblock {\em Geometric measure theory: a beginner's guide}.
\newblock Academic press, 2016.

\bibitem[Mor18]{morgan2018space}
Frank Morgan.
\newblock The space of planar soap bubble clusters.
\newblock In {\em Imagine Math 6}, pages 135--144. Springer, 2018.

\bibitem[MW02]{morgan2002standard}
Frank Morgan and Wacharin Wichiramala.
\newblock The standard double bubble is the unique stable double bubble in $\mathbb{R}^2$.
\newblock {\em Proceedings of the American Mathematical Society}, 130(9):2745--2751, 2002.

\bibitem[SW20]{schulze2020local}
Felix Schulze and Brian White.
\newblock A local regularity theorem for mean curvature flow with triple edges.
\newblock {\em Journal f{\"u}r die reine und angewandte Mathematik}, 2020(758):281--305, 2020.

\bibitem[Tay76]{taylor1976structure}
Jean~E Taylor.
\newblock The structure of singularities in soap-bubble-like and soap-film-like minimal surfaces.
\newblock {\em Annals of Mathematics}, 103(3):489--539, 1976.

\bibitem[Tra20]{tran2020index}
Hung Tran.
\newblock Index characterization for free boundary minimal surfaces.
\newblock {\em Communications in Analysis and Geometry}, 28, 2020.

\bibitem[TW16]{tonegawa2016blow}
Yoshihiro Tonegawa and Neshan Wickramasekera.
\newblock The blow up method for {Brakke} flows: networks near triple junctions.
\newblock {\em Archive for Rational Mechanics and Analysis}, 221(3):1161--1222, 2016.

\bibitem[TY13]{tang2013isoparametric}
Zizhou Tang and Wenjiao Yan.
\newblock Isoparametric foliation and {Yau} conjecture on the first eigenvalue.
\newblock {\em Journal of Differential Geometry}, 94(3):521--540, 2013.

\bibitem[Urb90]{urbano1990minimal}
Francisco Urbano.
\newblock Minimal surfaces with low index in the three-dimensional sphere.
\newblock {\em Proceedings of the American Mathematical Society}, pages 989--992, 1990.

\bibitem[Wan22a]{wang2022curvature}
Gaoming Wang.
\newblock Curvature estimates for stable minimal surfaces with a common free boundary.
\newblock {\em Calculus of Variations and Partial Differential Equations}, 61(4), May 2022.

\bibitem[Wan22b]{thesis}
Gaoming Wang.
\newblock {\em General Theory of Partial Differential Equations on Triple Junction Surfaces}.
\newblock PhD thesis, The Chinese University of Hong Kong, July 2022.

\bibitem[Wic04]{wichiramala2004proof}
Wacharin Wichiramala.
\newblock Proof of the planar triple bubble conjecture.
\newblock {\em Journal fur die Reine und Angewandte Mathematik}, 2004.

\bibitem[Yau82]{yau1982seminar}
Shing-Tung Yau.
\newblock {\em Seminar on differential geometry}, volume 102.
\newblock Princeton University Press Princeton, NJ, 1982.

\bibitem[Zhu17]{zhu2017minimal}
Jonathan~J Zhu.
\newblock Minimal hypersurfaces with small first eigenvalue in manifolds of positive {Ricci} curvature.
\newblock {\em Journal of Topology and Analysis}, 9(03):505--532, 2017.

\end{thebibliography}



\end{document}